\newtheorem{thm}{Theorem}[section]
\newtheorem{lem}[thm]{Lemma}
\newtheorem{prop}[thm]{Proposition}
\newtheorem{cor}[thm]{Corollary}
\theoremstyle{definition}
\newtheorem{dfn}[thm]{Definition}
\theoremstyle{remark}
\newtheorem{rem}[thm]{Remark}
\newtheorem{exm}[thm]{Example}
\numberwithin{equation}{section}
\newcommand{\dN}{\mathbb{N}}
\newcommand{\dZ}{\mathbb{Z}}
\newcommand{\dR}{\mathbb{R}}
\newcommand{\dC}{\mathbb{C}}
\newcommand{\dP}{\mathbb{P}}
\newcommand{\ii}{\mathrm{i}}
\newcommand{\scrL}{\mathscr{L}}
\newcommand{\trn}{\mathrm{T}}
\DeclareMathOperator{\vvec}{vec}
\DeclareMathOperator{\Lin}{Lin}
\DeclareMathOperator{\tr}{tr}
\DeclareMathOperator{\rank}{rank}
\DeclareMathOperator{\cone}{cone}
\newcommand{\ov}{\overline}
\newcommand{\clo}{\overline}
\newcommand{\dif}{\mathrm{d}}
\DeclareMathOperator{\ran}{ran}
\DeclareMathOperator{\Car}{Car}
\DeclareMathOperator{\car}{car}
\DeclareMathOperator{\ats}{at}
\newcommand{\cE}{\mathcal{E}}
\newcommand{\cH}{\mathcal{H}}
\newcommand{\cL}{\mathcal{L}}
\newcommand{\cM}{\mathcal{M}}
\newcommand{\cN}{\mathcal{N}}
\newcommand{\cU}{\mathcal{U}}
\newcommand{\cV}{\mathcal{V}}
\newcommand{\cW}{\mathcal{W}}
\newcommand{\cX}{\mathcal{X}}
\newcommand{\cY}{\mathcal{Y}}
\newcommand{\cHp}{\cH_{q,\succeq}}
\newcommand{\fC}{\mathfrak{C}}
\newcommand{\fL}{\mathfrak{L}}
\newcommand{\fU}{\mathfrak{U}}
\newcommand{\fX}{\mathfrak{X}}
\newcommand{\fY}{\mathfrak{Y}}
\newcommand{\bU}{\mathbf{U}}
\newcommand{\bV}{\mathbf{V}}
\newcommand{\bW}{\mathbf{W}}
\newcommand{\rnd}{\Phi}
\newcommand{\rnde}{\phi}
\DeclareMathOperator{\atnr}{nr}
\begin{document}
\title{On the Truncated Matricial Moment Problem. I}

\author{Conrad M\"adler}
\address{University of Leipzig,
Mathematical Institute,
Augustusplatz~10,
04109~Leipzig,
Germany}
\email{maedler@math.uni-leipzig.de}

\author{Konrad Schm\"udgen}
\address{University of Leipzig,
Mathematical Institute,
Augustusplatz~10,
04109~Leipzig,
Germany}
\email{schmuedgen@math.uni-leipzig.de}

\subjclass[2020]{Primary 47A57; Secondary 44A60, 14P10}
\keywords{Truncated matricial moment problem, core set}

\begin{abstract}
This paper is about the general truncated matrix-valued moment problem.
Let $\cH_q$ denote the complex Hermitian $q\times q$-matrices, $q\in \dN$.
Suppose that $(\cX,\fX)$ is a measurable space and $\cE$ is a finite-dimensional vector space of measurable mappings of $\cX$ into $\cH_q$.
A linear functional $\Lambda$ on $\cE$ is called a moment functional if there exists a positive $\cH_q$-valued measure $\mu$ on $(\cX,\fX)$  such that $\Lambda(F)=\int_\cX \langle F,\dif\mu\rangle$ for $F\in \cE$.

We prove a matricial version of the Richter--Tchakaloff theorem which states that each moment functional on $\cE$ has a finitely atomic representing measure.
It is shown that strictly positive linear functionals on $\cE$ are moment functionals.
For a moment functional $\Lambda$, we study the set of atoms $\cW(\Lambda)$ and the  Carath\'eodory numbers $\Car (\Lambda)$, $\car(\Lambda)$ and we define and investigate the core set $\cV(\Lambda)$.
A main result of the paper is the equality $\cW(\Lambda)=\cV(\Lambda)$.
\end{abstract}

\maketitle

\section{Introduction}
The aim of this paper is to study the general matricial truncated moment problem.
More precisely, we are concerning with the following setup:
Suppose $(\cX,\fX)$ is a measurable space such that $\fX$ contains all singleton sets $\{x\}\in \fX$ for  $x\in \cX$.
Fix a number $q\in \dN$.
Let $\cH_q$ denote the complex Hermitian $q\times q$-matrices and $\cM_q(\cX,\fX)$ the set of positive $\cH_q$-valued measures on $(\cX,\fX)$.

Suppose that $\cE$ is a finite-dimensional real vector space of measurable mappings $F\colon\cX\to \cH_q$.
A linear functional $\Lambda$ on $\cE$ is called a \emph{moment functional} if there exists a measure $\mu\in\cM_q(\cX,\fX)$ such that
\begin{equation}\label{mf}
    \Lambda(F)
    =\int_\cX \langle F(x),\dif\mu\rangle\quad\text{for all }F\in \cE.
\end{equation}
An important class of measures of $\cM_q(\cX,\fX)$ are finitely atomic measures of the form $\mu=\sum_{j=1}^k \delta_{x_j} M_j$, where $x_1,\dotsc,x_k\in \cX$ and $M_1,\dotsc,M_k$ are non-negative matrices of $\cH_q$.
Then \eqref{mf} reads as
\begin{equation}\label{mf1}
    \Lambda (F)
    =\sum_{j=1}^k\, \tr(F(x_j)M_j)\quad\text{for all }F\in \cE.
\end{equation}
Further, let $E$ be a finite-dimensional vector space of real-valued measurable functions on $(\cX,\fX)$.
A linear mapping $L$ of $E$ into $\cH_q$ is called a \emph{matrix moment functional} if there is a measure $\mu\in\cM_q(\cX,\fX)$ such that
\begin{equation}\label{mf2}
    L(f)
    =\int_\cX f(x) \,  \dif\mu\quad\text{for all }f\in E.
\end{equation}
This paper and its successor are devoted to a systematic study of moment functionals \eqref{mf} on $\cE$ and of matrix moment functionals \eqref{mf2} on $E$.
Though there exist a well-developed theory of the scalar truncated moment problem (developed, for instance, in  the recent monograph \cite[Part IV]{Schm17}, see also \cite{MR1303090,MR4379909}) it turns out that the matrix-valued case leads to a number of unexpected new difficulties and problems.

There is an extensive literature about the truncated matrix-valued moment problem for polynomials in a single variable; we mention only a few references such as \cite{Krein,Ando,Dym,
AdamyanT,Zag,FritzscheKM}.
For polynomials in several variables it was treated in \cite{vasilescu,MR2168867,BakonyiW,KimseyW,Kimsey,Le,KimseyT}.
It seems that the truncated matricial moment problem for general measurable or continuous functions has not been yet studied in the literature, according to our knowledge.
Nevertheless  multivariate polynomials are our main guiding example.

Let us briefly describe the contents and main results of this paper.

In Section~\ref{basicconstructions}, we develop some basic constructions on linear mappings $L\colon E\to \cH_q$ and functionals $\Lambda$ on $\cE$. 
In particular, we investigate  the canonical extension  of a linear mapping $L\colon E\to \cH_q$  to a linear functional $\Lambda_L$ on the space $\cH_q(E)$ of Hermitian $q\times q$-matrices over the complexification of $E$.

In Section~\ref{momentfunctionals}, we introduce moment functionals and matrix moment functionals.
Amongst others we obtain a one-to-one correspondence between matrix moment functionals on $E$ and moment functionals on 
$\cH_q(E)$ (Theorem~\ref{LLmomentf} and  Proposition~\ref{equivalence}), so it suffices to study moment functionals on $\cH_q(E)$ or on a general vector space $\cE$. 

In Section~\ref{RTtheorem}, we prove a matrix version of the famous Richter--Tchakaloff theorem (Theorem \ref{richter}).
It states that each moment functional $\Lambda$ on $\cE$ admits a finitely atomic representing measure $\mu\in \cM_q(\cX,\fX)$, so $\Lambda$ can be always represented in the form \eqref{mf1}. D. Kimsey \cite{Kimsey} has obtained such a result in the polynomial case; we treat the general case and use  another approach.
The Richter-Tchakaloff theorem allows one to define and to estimate Carath\'eodory numbers of moment functionals and matrix moment functionals. 

A linear functional $\Lambda$ on $\cE$ is called \emph{strictly positive} if $\Lambda(F)>0$ for all $F\in \cE$, $F\neq O$, such that $F(x)\succeq O$ on $\cX$.
In Section~\ref{strictlypositive}, we show that each strictly positive linear functional is a moment functional (Theorem~\ref{T130}).

In the remaining three sections of the paper we define and investigate  fundamental notions on a moment functional $\Lambda\neq 0$ on $\cE$.

Section~\ref{setofatoms} deals with the set of all atoms $\cW(\Lambda)$ of representing measures of $\Lambda$.
For $x\in \cX$, the vector space $ \bW(\Lambda;x)$ generated by the ranges $\ran  \mu(\{x\})$ for all representing measures $\mu$ of $\Lambda$ is another important quantity. 

A basic concept of the theory of scalar truncated moment problems is the core variety; it was introduced by L.~Fialkow \cite{MR3688460}.
Its importance stems from a result proved in \cite{DioS1} which states that it is equal to the set of atoms.

Apparently, there is no obvious direct generalization of the definition of the core variety to the truncated matricial moment problem.
In Section~\ref{coreset}, we propose the following approach (see Definition~\ref{D147}).
We set $\cV_0(\Lambda):= \cX$ and $\bV_0(\Lambda;x):=\dC^q$ for $x\in\cX$ and define inductively for $j\in \dN$:
\begin{align*}
    \cN_j(\Lambda) &:= \{F\in\ker\Lambda\colon(P_{j-1}FP_{j-1})(x)\succeq  O~~ \text{ for }~ x\in\cV_{j-1}(\Lambda)\},\\ \cV_j(\Lambda)&:=\bigcap\nolimits_{F\in\cN_j(\Lambda)}\{x\in\cV_{j-1}(\Lambda)\colon\det(P_{j-1}FP_{j-1}+P_{j-1}^\bot)(x)=0\},\\   
    \bV_j(\Lambda;x)&:=\bigcap\nolimits_{F\in\cN_j(\Lambda)}\, \ker\, (P_{j-1}FP_{j-1}+P_{j-1}^\bot)(x)~~\text{for  }~ x\in\cV_{j-1}(\Lambda),
\end{align*}
where $P_j\colon\cV_j(\Lambda)\to\cH_q$ denotes the orthogonal projection onto ${\bV_j(\Lambda;x)}$.
Then the \emph{core set} of $\Lambda$ is defined by
\[
    \cV( \Lambda)
    :=\bigcap\nolimits_{j=0}^\infty \cV_j(\Lambda).\label{cdef}
\]
Further, we set\, $\bV( \Lambda;x):=\bigcap_{j=0}^\infty \bV_j(\Lambda;x)$ for  $x\in\cV( \Lambda)$. 

In Section~\ref{coreset}, we prove two main results of this paper (Theorems~\ref{P148-c} and~\ref{T149}). The proofs are lengthy and require a number of  technical  considerations. The first of these theorems states that 
the sets $\cV_j(\Lambda)$ and $\cV(\Lambda)$ are measurable and 
there exists a $k\in\dN$ such that
\[
    \cV_k(\Lambda)
    =\cV_{k+\ell}(\Lambda)
    =\cV( \Lambda)
    \text{ and }
    \bV_{k}(\Lambda;x)
    =\bV_{k+\ell}(\Lambda;x)
    =\bV( \Lambda;x)
    \neq \{0\}
\]
for $x\in\cV( \Lambda)$ and all $\ell\in\dN$.
The second says that 
\[
\cW(\Lambda)
=\cV( \Lambda)
\text{ and }
\bW(\Lambda;x)
=\bV( \Lambda;x)
\text{ for }x\in\cW(\Lambda).
\]

In Section~\ref{someexamples}, we illustrate these notions by developing three simple examples.

In Section~\ref{prelim}, we collect some notation and basic facts which are used later.

The following assumptions and notations are kept throughout this paper:

\begin{itemize}
    \item $q$ is a positive integer and $\cH_q$ are the complex Hermitian $q\times q$-matrices,
    \item $(\cX,\fX)$ is a measurable space such that $\{x\}\in \fX$ for all $x\in \cX$,
    \item $E$ is a \textbf{finite-dimensional} real vector space of  measurable functions ${f\colon\cX\to \dR}$,
    \item $\cE$ is a \textbf{finite-dimensional} real vector space of measurable mappings ${F\colon \cX\to \cH_q}$.
\end{itemize}  

\section{Preliminaries}\label{prelim}
We abbreviate $\dN_0=\{0,1,2,\dotsc\}$ and $\dN=\{1,2,\dotsc\}$.
For $p\in \dN$, we denote by $M_p(\dC)$ the $p\times p$-matrices with complex entries, by $\cH_p$ the Hermitian matrices of $M_p(\dC)$ and by $\cH_{p,\succeq}$ the nonnegative matrices of $\cH_p$.
The Loewner ordering of Hermitian matrices is denoted by ``$\succeq$''. 

As noted above, we let $q\in \dN$.
Let $e_{jk}$, where $ j,k\in\{1,\dotsc,q\}$, denote the standard matrix units of $M_q(\dC)$, that is, $e_{jk}$ is the matrix with $1$ as $(j,k)$-entry and zeros elsewhere.
For $j,k\in\{1,\dotsc,q\}$ we set
\[
    H_{jk}
    :=
    \begin{cases}
        \frac{1}{\sqrt{2}}(e_{jk}+e_{kj})&\text{if }j<k\\
        ~ e_{jj}&\text{if }j=k\\
        \frac{\ii}{\sqrt{2}}(e_{jk}-e_{kj})&\text{if }j>k
    \end{cases}.
\]
Let $\tr $ denote the trace  and $ \langle \cdot,\cdot\rangle$  the scalar product on $M_q(\dC)$ given by
\[
\langle X,Y\rangle
:= \tr (X^*Y)\quad\text{for }X,Y\in M_q(\dC).
\]
For Hermitian matrices $X,Y\in \cH_q$ the scalar product
\begin{equation}\label{scalarher}
\langle X,Y\rangle
= \tr (XY)
=\tr (YX)
=\langle Y,X\rangle
\end{equation}
is real and $(\cH_q,\langle \cdot,\cdot\rangle)$ is a $q^2$-dimensional real Hilbert space with orthonormal basis $\{H_{jk}\colon j,k=1,\dotsc,q\}$.
This is the main reason why we shall work with the matrices $H_{jk}$ instead of the matrix units $e_{jk}$.

For  matrices $X,Y\in \cH_{q,\succeq}$ the scalar product is nonnegative, since
\begin{equation}\label{scalarpos}
\langle X,Y\rangle
= \tr (XY)
=\tr (\sqrt{Y}\, X \sqrt{Y})
\geq 0.
\end{equation} 
The standard scalar product of $\dC^q$ is also denoted by $\langle\cdot,\cdot\rangle$, that is, 
\[
\langle u,v\rangle:=\sum_{j=0}^q u_j\,\overline{v_j}\quad\text{for  }u=(u_1,\dotsc,u_q)^\trn , v=(v_1,\dotsc,v_q)^\trn \in \dC^q.
\]
For later use we recall the following well-known lemma.

\begin{lem}\label{tracezero}
Suppose that $A, B\in \cH_{q,\succeq}$.
Then the following are equivalent:
\begin{itemize}
\item[(i)] $\langle A,B\rangle=0$.
\item[(ii)] $ABA=O$.
\item[(iii)] $BA=O$.
\item[(iv)] $AB=O$, or equivalently,  $A \lceil \, \ran B=O$.
\item[(v)] $\ran A \perp \ran B$.
\item[(vi)]  $A$ acting  on $\dC^q=\ran B\,\oplus\, \ker B$ has a block matrix representation 
\begin{align*}
A=
\begin{pmatrix}
O&O \\
O& \tilde{A}
\end{pmatrix}.
\end{align*}
\end{itemize}
\end{lem}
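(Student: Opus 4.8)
The plan is to route everything through one identity. By \eqref{scalarpos} (with $X=A$, $Y=B$) and $\sqrt B\,\sqrt B=B$ we have $\langle A,B\rangle=\tr(\sqrt B A\sqrt B)$, and since $\sqrt B A\sqrt B=(\sqrt A\sqrt B)^*(\sqrt A\sqrt B)\in\cH_{q,\succeq}$ while a nonnegative matrix has vanishing trace iff it is $O$, we get
\[
\langle A,B\rangle=\bigl\|\sqrt A\sqrt B\bigr\|^2,\qquad\text{so}\qquad \text{(i)}\iff \sqrt A\sqrt B=O,
\]
where $\|\cdot\|$ is the Hilbert--Schmidt norm. From $\sqrt A\sqrt B=O$ I would read off the other conditions directly: $AB=\sqrt A(\sqrt A\sqrt B)\sqrt B=O$, which is (iv) (and ``$A$ restricted to $\ran B$ vanishes'' is merely a restatement of $AB=O$); taking adjoints gives $\sqrt B\sqrt A=O$, hence $BA=\sqrt B(\sqrt B\sqrt A)\sqrt A=O$, which is (iii); and $ABA=(AB)A=O$, which is (ii).

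To close the cycle I would argue (ii)$\Rightarrow$(i) by conjugating $ABA=O$ with $\sqrt B$: then $(\sqrt B A\sqrt B)^2=\sqrt B(ABA)\sqrt B=O$, and since $\sqrt B A\sqrt B\in\cH_{q,\succeq}$, a nonnegative matrix whose square is $O$ must be $O$, so $\sqrt B A\sqrt B=O$ and $\langle A,B\rangle=\tr(\sqrt B A\sqrt B)=0$. Next, (iii)$\Leftrightarrow$(iv) is immediate because $A,B$ are Hermitian, so $(BA)^*=AB$ and one product is $O$ iff the other is; and (iii)$\Rightarrow$(i) is trivial from $\langle A,B\rangle=\tr(AB)=\tr(BA)=0$. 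At this point (i)--(iv) are all equivalent.

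It remains to attach (v) and (vi). For (v)$\Leftrightarrow$(iii): $\ran A\perp\ran B$ says $\langle Ax,By\rangle=0$ for all $x,y\in\dC^q$, and $B=B^*$ turns this into $\langle BAx,y\rangle=0$ for all $x,y$, i.e.\ $BA=O$. For (vi)$\Leftrightarrow$(iv): decompose $\dC^q=\ran B\oplus\ker B$ orthogonally (legitimate since $B$ is Hermitian) and let $P$ be the orthogonal projection onto $\ran B$; because $\ran P=\ran B$, the condition $AB=O$ is the same as $AP=O$, i.e.\ the first block column of $A$ in this decomposition is $O$, and Hermiticity of $A$ then forces the first block row to be $O$ as well, which is exactly the stated block form with $\tilde A$ the compression of $A$ to $\ker B$; conversely that block form plainly gives $A(\ran B)=\{0\}$, hence $AB=O$. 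I do not anticipate a genuine obstacle: the lemma is classical and essentially bookkeeping, the only two points worth stating carefully being the identity $\langle A,B\rangle=\|\sqrt A\sqrt B\|^2$ and the elementary fact that a nonnegative Hermitian matrix with zero square vanishes.
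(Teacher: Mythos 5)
Your proof is correct and uses essentially the same tools as the paper's (square roots of positive matrices, the fact that a nonnegative Hermitian matrix with zero trace vanishes, and adjoints/Hermiticity for (iii)$\leftrightarrow$(iv), (v), (vi)). The only difference is organizational: you centralize everything on the identity $\langle A,B\rangle=\lVert\sqrt{A}\sqrt{B}\rVert^2$, whereas the paper proves the cycle (i)$\to$(ii)$\to$(iii)$\to$(iv)$\to$(i) directly; both routes are sound.
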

\begin{proof}
(i)$\to$(ii):
Since   $\langle A,B\rangle=\tr(AB)=\tr \ (\sqrt{A}B\sqrt{A})=0$ by~(i) and $\sqrt{A}\, B\, \sqrt{A}\succeq O$, we obtain $\sqrt{A}\, B\, \sqrt{A}=O$.
Multiplying by $\sqrt{A}$ from the right and from the left yields $ABA=O$.

(ii)$\to$(iii):
Let $u\in \dC^q$.
Then, by~(ii), 
\[
\lVert\sqrt{B}\, Au\rVert^2
=\langle \sqrt{B}\, Au,\sqrt{B}\, Au\rangle
= \langle ABAu,u\rangle
=0.
\]
Hence $\sqrt{B}\lceil  A\dC^q=O$.
Since the closure of $A\dC^q$ and $\sqrt{A}\, \dC^q$ coincide, it follows that $\sqrt{B}\sqrt{A}=O$.
Multiplying by $\sqrt{B}$ and $\sqrt{A}$ from the left and the right, respectively, we get $BA=O$.

(iii) and~(iv) are equivalent by applying the adjoint and~(iv)$\to$(i) is trivial.

(iv)$\leftrightarrow$(v) follows from  the identity $\langle Au,Bv\rangle=\langle u ,ABv\rangle$, $u,v\in \dC^q$.

(vi)$\to$(iv) is obvious.
Using that $A\succeq O$ it follows that~(iv)$\to$(vi).
\end{proof}

Let $\scrL(E,\cH_q)$ denote the linear mappings of $E$ into $\cH_q$.
We define a real vector space $\cH_q(E)$  and a cone $\cH_{q,\succeq}(E)$ in $\cH_q(E)$ by
\begin{align*}
    \cH_q(E)&:=\left\{\sum\nolimits_{j,k=1}^p\, f_{jk}H_{jk}\colon f_{jk}\in E\text{ for }j,k=1,\dotsc,q\right\},\\
    \cH_{q,\succeq}(E)&:=\{ F\in \cH_q(E)\colon F(x)\succeq O\text{ for all }x\in \cX\}.
\end{align*}
That is, $\cH_q(E)$ are the $q\times q$-Hermitian matrices over the complexification of the real vector space $E$.
Clearly, if $\{f_1,\dotsc,f_m\}$ is a  basis of the vector space $E$, then $\{f_i H_{jk}\colon i=1,\dotsc,m; j,k=1,\dotsc,q\}$ is a vector space basis of $\cH_q(E)$.
In particular, $\dim \cH_q(E)= q^2\dim E$.

Next we develop a number of measure-theoretic definitions and facts.
Suppose $(\cX,\fX)$ is a  non-trivial measurable space such that $\{x\}\in\fX$ for all $x\in\cX$. 
For  a subset $\cY$ of $\cX$, the system $\cY\cap\fX:=\{\cY\cap X\colon X\in\fX\}$ is a $\sigma$-algebra.
If $\cY\in\fX$, then $\cY\cap\fX$ is even a sub-$\sigma$-algebra of $\fX$.

We denote by $\cM_q(\cX;\fX)$ the $\cH_q$-valued positive  measures on $(\cX,\fX)$.
Suppose that $\mu\in \cM_q(\cX,\fX)$.
Let $\tau:=\tr \mu$ be the trace measure of $\mu$.
Then there exists a Radon--Nikodym matrix $\rnd(x)=(\rnde_{ij}(x))_{i,j=1}^q$ of measurable functions $\rnde_{ij}$ on $(\cX,\fX)$ such that the matrix $\rnd(x)$ is Hermitian, that is, $\rnde_{jk}(x)=\ov{\rnde_{kj}(x)}$\, for $j,k=1,\dotsc,q$, and $ \rnd(x)\succeq O$  $\tau$-a.\,e.\ on $\cX$.
Upon changing $\rnd$ on a  $\tau$-null set if necessary, we will assume  the following throughout this paper:
Whenever we speak about the Radon--Nikodym matrix of $\mu$ we mean the version $\rnd(x)$ for which $\rnd(x)$ is Hermitian and $\rnd(x)\succeq O$ for all $x\in \cX$.
Further, we will always use the notation $\tau$ for the tracial measure of $\mu$ and $\rnd$ for the Radon-Nikodym matrix  with respect to $\tau$.

A point $x\in \cX$ is called an \emph{atom} of $\mu\in \cM_q(\cX,\fX)$ if $\mu(\{x\})\neq  O$.
A measure $\mu\in \cM_q(\cX,\fX)$ is said to be \emph{finitely atomic} if there exists a finite subset $M$ of $\cX$ such that $\mu(\cX\setminus M)=O$.  

Let $x\in \cX$.
Then $\delta_x$ means the point measure at $x$, that is, $\delta_x(M)=1$ if $x\in M$ and $\delta_x(M)=0$ if $x\notin M$, and $\ell_x$ denotes the point evaluation at $x$, that is, $\ell_x(F)=F(x)$ for $F\in \cE$.

Let $\cM(\cX,\fX;\cH_q)$ be the real vector space of measurable mappings $F\colon\cX\to\cH_q$.
A measurable mapping $F\colon \cX\to\cH_q$ on $(\cX,\fX)$ is said to be \emph{$\mu$-integrable} if $\langle F,\rnd\rangle\in L^1(\tau;\dR)$.
In this case, $\int_{\cX} \langle F,\dif\mu\rangle$ is defined by
\begin{equation}\label{inffmu}
    \int_{\cX} \langle F,\dif\mu\rangle
    :=\int_{\cX }\langle F,\rnd\rangle\, \dif\tau.
\end{equation}
Let $L^1(\mu;\cH_q)$ denote the set of all $\mu$-integrable mappings $F\colon \cX\to \cH_q$.

\begin{exm}\label{E1700}
Let $k\in\dN$, $x_1,\dotsc,x_k\in\cX$, and $M_1,\dotsc,M_k\in\cH_{q,\succeq}$.
Suppose that $M_j\neq O$ for $j=1,\dotsc,k$. 
Then $\mu:=\sum_{j=1}^k M_j\delta_{x_j}$ belongs to $\cM_q(\cX,\fX)$.
Clearly, the trace measure of $\mu$ is $\tau:=\sum_{j=1}^k \tr (M_j)\delta_{x_j}$ and the Radon--Nikodym derivative of $\mu$ with respect to $\tau$ is
\[
    \rnd(x)
    =\sum_{j=1}^k \, (\tr \,M_j )^{-1} \, M_j\mathbf{1}_{\{x_j\}}(x),
\]
where $\mathbf{1}_{\{x_j\}}$ denotes the characteristic function of the singleton $\{x_j\}$. 
Then, by \eqref{inffmu} and \eqref{scalarher} we obtain for any $F\in L^1(\mu;\cH_q)$,
\begin{equation}\label{inttrace}
    \int_\cX \langle F,\dif\mu\rangle
    =\int_\cX \langle F,\rnd\rangle\dif\tau
    =\sum_{j=1}^k\, \langle F(x_j),M_j\rangle=\sum_{j=1}^k\, \tr (F(x_j)M_j).
\end{equation}
\end{exm}

\section{Some Basic Constructions}\label{basicconstructions}
To each linear mapping $L\in \scrL(E,\cH_q)$ we  associate in this section a linear functional $\Lambda_L$ and a linear mapping $L_{\otimes}$.

\begin{dfn}
For $L\in \scrL(E,\cH_q)$, we define a linear functional $\Lambda_L$ on $\cH_q(E)$  by 
\begin{equation}\label{defcl}
    \Lambda_L(F)
    :=\sum_{j,k=1}^q\langle L( \langle F,H_{jk}\rangle), H_{jk}\rangle,
    \qquad F\in \cH_q(E)
\end{equation}
and a linear mapping $L_{\otimes}\colon\cH_q(E)\to \cH_q \otimes \cH_q$  by
\[
    L_{\otimes}(F)
    :=\sum_{j,k=1}^q H_{jk} \otimes  L(\langle F,H_{jk}\rangle), 
    \qquad F\in\cH_q(E).
\]
\end{dfn}

\begin{rem}
If $\mathrm{m}\colon M_q(\dC)\otimes M_q(\dC)\to M_q(\dC)$ denotes the multiplication map, then
\[
    \Lambda_L(F)
    =\tr[(\mathrm{m}\circ L_\otimes)(F)],
    \qquad F\in\cH_q(E).
\]
\end{rem} 

Let $I_q$ be the identity matrix in $M_q(\dC)$.
The following lemmas collect a number of useful formulas.

\begin{lem}\label{propertyL}
Let $x\in \cX$, $f\in E$, $F\in \cH_q(E)$, $H\in \cH_q$, and $v\in \dC^q$.
Then:
\begin{enumerate}[(a)]
    \item\label{propertyL.i} $fH\in \cH_q(E)$ and $\Lambda_L(fH)=\langle L(f),H\rangle$.
    In particular, $\Lambda_L(f I_q)=\tr L(f)$.
    \item\label{propertyL.ii} $H\ell_x\in \scrL(E,\cH_q)$ and $\Lambda_{H\ell_x}(F)=\langle F(x),H\rangle$.
    In particular, $\Lambda_{I_q\ell_x}(F)=\tr F(x)$.
    \item\label{propertyL.iii} $vv^*\ell_x\in \scrL(E,\cH_q)$ and $\Lambda_{vv^*\ell_x}(F)=v^* F(x)v$.
    \item\label{propertyL.iv} $fvv^*\in \cH_q(E)$ and $\Lambda_L(fvv^*)= v^*L(f)v$.
\end{enumerate}
\end{lem}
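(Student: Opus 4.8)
The plan is to derive everything from the single computation in part~(a), because $vv^*\in\cH_q$, so parts (c) and (d) are merely the special cases $H=vv^*$ of (b) and (a), together with the elementary identity $\langle X,vv^*\rangle=\tr(Xvv^*)=v^*Xv$ for $X\in\cH_q$; likewise the ``in particular'' assertions are the specializations $H=I_q$ combined with $\langle X,I_q\rangle=\tr X$ for $X\in\cH_q$. So the real content is (a) and (b), each of which is a direct unwinding of the definition~\eqref{defcl} of $\Lambda_L$ using that $\{H_{jk}\}$ is an orthonormal basis of the real Hilbert space $\cH_q$.

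For part~(a): since $\{H_{jk}\}$ is an orthonormal basis of $\cH_q$, every $H\in\cH_q$ satisfies $H=\sum_{j,k}\langle H,H_{jk}\rangle H_{jk}$ with \emph{real} coefficients $\langle H,H_{jk}\rangle$. Hence $fH=\sum_{j,k}\bigl(\langle H,H_{jk}\rangle f\bigr)H_{jk}$ with each $\langle H,H_{jk}\rangle f\in E$, which shows $fH\in\cH_q(E)$ and, reading off coordinates, $\langle fH,H_{jk}\rangle=\langle H,H_{jk}\rangle f$ for all $j,k$. Inserting this into~\eqref{defcl} and using the linearity of $L$ (legitimate because the coefficients are real) and of the scalar product yields
\[
\Lambda_L(fH)=\sum_{j,k}\langle H,H_{jk}\rangle\,\langle L(f),H_{jk}\rangle=\Bigl\langle L(f),\sum_{j,k}\langle H,H_{jk}\rangle H_{jk}\Bigr\rangle=\langle L(f),H\rangle .
\]
Taking $H=I_q$ gives $\Lambda_L(fI_q)=\langle L(f),I_q\rangle=\tr L(f)$, since $L(f)$ is Hermitian.

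For part~(b): the map $f\mapsto f(x)H$ is clearly linear from $E$ into $\cH_q$, so $H\ell_x\in\scrL(E,\cH_q)$. Writing $F=\sum_{j,k}f_{jk}H_{jk}\in\cH_q(E)$, one has $\langle F,H_{jk}\rangle=f_{jk}\in E$ and $(H\ell_x)(f_{jk})=f_{jk}(x)H$, so by~\eqref{defcl}
\[
\Lambda_{H\ell_x}(F)=\sum_{j,k}\langle f_{jk}(x)H,H_{jk}\rangle=\sum_{j,k}f_{jk}(x)\langle H,H_{jk}\rangle=\Bigl\langle\sum_{j,k}f_{jk}(x)H_{jk},H\Bigr\rangle=\langle F(x),H\rangle ,
\]
using the symmetry of the real scalar product on $\cH_q$ and $F(x)=\sum_{j,k}f_{jk}(x)H_{jk}$; the case $H=I_q$ then gives $\Lambda_{I_q\ell_x}(F)=\tr F(x)$. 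Finally, applying (b) and (a) with $H=vv^*\in\cH_q$ and using $\langle X,vv^*\rangle=\tr(Xvv^*)=v^*Xv$ produces (c) and (d) respectively.

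These computations are routine; the only points that need care are that all scalar products among Hermitian matrices are real — so that expanding $H$ (or $vv^*$) in the basis $\{H_{jk}\}$ yields genuine elements of the real vector space $E$ and $L$ may be pulled through — and the coordinate identification $\langle F,H_{jk}\rangle=f_{jk}$ for $F=\sum_{j,k}f_{jk}H_{jk}$. I do not expect any genuine obstacle.
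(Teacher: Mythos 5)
Your proof is correct and follows essentially the same route as the paper: parts (a) and (b) by expanding $H$ (resp.\ $F$) in the orthonormal basis $\{H_{jk}\}$ with real coefficients and unwinding the definition \eqref{defcl}, and parts (c), (d) as the specializations $H=vv^*$ via $\langle X,vv^*\rangle=v^*Xv$. No gaps.
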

\begin{proof} 
We can  write $H=\sum_{j,k=1}^qh_{jk}H_{jk}$ and $F=\sum_{j,k=1}^qf_{jk}H_{jk}$, with  $h_{jk}\in\dR$ and $f_{jk}\in E$.
Then  $h_{jk}=\langle H,H_{jk}\rangle$ and $f_{jk}=\langle F,H_{jk}\rangle$ for all $j,k$.

\ref{propertyL.i}:
It is obvious that $fH\in \cH_q(E)$.
We  compute
\[\begin{split}
    \Lambda_L(fH)
    &=\sum_{j,k=1}^q \langle L( \langle fH, H_{jk}\rangle),H_{jk}\rangle
    =\sum_{j,k=1}^q\langle L( f\langle H, H_{jk}\rangle), H_{jk}\rangle\\
    &=\sum_{j,k=1}^q \langle L(fh_{jk}),H_{jk}\rangle
    =\sum_{j,k=1}^q\langle h_{jk}L(f),H_{jk}\rangle\\
    &=\left\langle L(f),\sum_{j,k=1}^qh_{jk} H_{jk}\right\rangle
    =\langle L(f),H\rangle.
\end{split}\]
Clearly, $\Lambda_L(f I_q)= \langle L(f),I_q\rangle= \tr L(f)$.

\ref{propertyL.ii}: 
Recall that the matrices $\{H_{jk}\}$ are an orthonormal basis of $(\cH_q,\langle\cdot, \cdot\rangle)$.
Therefore, we have $F(x)=\sum_{j,k=1}^qf_{jk}(x)H_{jk}$ with real-valued functions $f_{jk}(x)$ on $\cX$.
Using \eqref{scalarher} we obtain
\begin{equation}\label{HF(x)}
    \langle H, F(x)\rangle
    = \langle F(x),H\rangle
    = \sum_{j,k=1}^q \ov{f_{jk}(x)}\, h_{jk}.
\end{equation}
On the other hand,
\[\begin{split}
\Lambda_{H\ell_x}(F)
&= \sum_{j,k=1}^q \langle  H\ell_x(\langle F,H_{jk}\rangle) ,H_{jk}\rangle
= \sum_{j,k=1}^q \langle (H\ell_x)(f_{jk}),H_{jk}\rangle\\ 
&= \sum_{j,k=1}^q \langle Hf_{jk}(x),H_{jk}\rangle
= \sum_{j,k=1}^q  \ov{f_{jk}(x)}\langle H,H_{jk}\rangle
=\sum_{j,k=1}^q \ov{f_{jk}(x)}\, h_{jk}.
\end{split}\]
Inserting \eqref{HF(x)} into the latter equation yields $\Lambda_{H\ell_x}(F)=\langle H, F(x)\rangle$.

Setting $H=I_q$ we get $\Lambda_{I_q\ell_x}(F)=\langle I_q,F(x)\rangle=\tr F(x)$. %

\ref{propertyL.iii}:
Clearly, 
$vv^*\ell_x\in \scrL(E,\cH_q)$.
Using~\ref{propertyL.ii} we derive
\[
    \Lambda_{ vv^*\ell_x}(F)
    =\langle  vv^*, F(x)\rangle
    =\tr(vv^* F(x))
    = \tr(v^*F(x)v)
    = v^*F(x)v.
\]

\ref{propertyL.iv}:
Obviously, $fvv^*\in \cH_q(E)$.
Using~\ref{propertyL.i} we deduce 
\[
\Lambda_L(fvv^*)
=\langle L(f),vv^*\rangle
=\tr (L(f)vv^*)
= \tr (v^*L(f)v)
= v^*L(f)v.\qedhere
\]
\end{proof}

\begin{lem}\label{preliminaryL}
The map $L\mapsto \Lambda_L$ is an isomorphism of the vector space $\scrL(E,\cH_q)$ on the dual space $\cH_q(E)^*$ of the vector space $\cH_q(E)$.
\end{lem}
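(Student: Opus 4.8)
The plan is to verify that $L \mapsto \Lambda_L$ is linear, injective, and surjective between two spaces that turn out to have the same finite dimension, so that one of injectivity/surjectivity plus dimension count suffices. First I would note that linearity in $L$ is immediate from the defining formula \eqref{defcl}, since $F \mapsto \langle L(\langle F,H_{jk}\rangle), H_{jk}\rangle$ depends linearly on $L$. Next I would compute both dimensions: $\dim \scrL(E,\cH_q) = \dim E \cdot \dim \cH_q = q^2 \dim E$, and by the remark after the definition of $\cH_q(E)$ we have $\dim \cH_q(E) = q^2 \dim E$, hence $\dim \cH_q(E)^* = q^2 \dim E$ as well. So it remains only to show the map is injective (equivalently surjective).

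For injectivity, suppose $\Lambda_L = 0$. I would exploit part~\ref{propertyL.i} of Lemma~\ref{propertyL}, which gives $\Lambda_L(fH) = \langle L(f), H\rangle$ for every $f \in E$ and $H \in \cH_q$. If $\Lambda_L$ vanishes identically on $\cH_q(E)$, then in particular $\langle L(f), H\rangle = 0$ for all $H \in \cH_q$; since $\langle\cdot,\cdot\rangle$ is a genuine (real) inner product on $\cH_q$, this forces $L(f) = O$. As $f \in E$ was arbitrary, $L = 0$, proving injectivity. Combined with the equality of dimensions, the map is a linear isomorphism onto $\cH_q(E)^*$.

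Alternatively, and perhaps more transparently, I could prove surjectivity directly: given $\Phi \in \cH_q(E)^*$, define $L \colon E \to \cH_q$ by specifying $L(f)$ through its inner products, namely $\langle L(f), H_{jk}\rangle := \Phi(f H_{jk})$ for $j,k = 1,\dots,q$, i.e. $L(f) := \sum_{j,k} \Phi(fH_{jk}) H_{jk}$. This is manifestly linear in $f$ and Hermitian-valued. Then one checks, using the expansion $F = \sum_{j,k} f_{jk} H_{jk}$ with $f_{jk} = \langle F, H_{jk}\rangle \in E$ and the linearity of $\Phi$, that $\Lambda_L(F) = \sum_{j,k}\langle L(f_{jk}), H_{jk}\rangle = \sum_{j,k} \Phi(f_{jk} H_{jk}) = \Phi\big(\sum_{j,k} f_{jk} H_{jk}\big) = \Phi(F)$, so $\Lambda_L = \Phi$. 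Either route closes the argument.

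There is no serious obstacle here; the only mild subtlety is bookkeeping with the orthonormal basis $\{H_{jk}\}$ of $(\cH_q, \langle\cdot,\cdot\rangle)$ and making sure the reconstruction formula for $L(f)$ is consistent with \eqref{defcl}. The cleanest writeup is to cite Lemma~\ref{propertyL}\ref{propertyL.i} for injectivity and then invoke the dimension equality $\dim\scrL(E,\cH_q) = q^2\dim E = \dim\cH_q(E) = \dim\cH_q(E)^*$ to conclude.
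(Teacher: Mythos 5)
Your proposal is correct and follows essentially the same route as the paper: linearity, the dimension count $\dim\scrL(E,\cH_q)=q^2\dim E=\dim\cH_q(E)^*$, and injectivity via Lemma~\ref{propertyL}\ref{propertyL.i} applied to elements $fH_{jk}$ together with the fact that the $H_{jk}$ form an orthonormal basis of $(\cH_q,\langle\cdot,\cdot\rangle)$. The alternative direct surjectivity argument you sketch is also valid but not needed.
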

\begin{proof}
Clearly, $L\mapsto \Lambda_L$ is a linear map of $\scrL(E,\cH_q)$ in the vector space $\cH_q(E)^*$.
Since $\dim \cH_q(E)=q^2\dim E$, we have $\dim \cH_q(E)^*=q^2 \dim E=\dim \scrL(E,\cH_q)$.
It suffices to show that this map is injective.
Indeed, suppose that $\Lambda_L=0$ for some $L\in \scrL(E,\cH_q)$.
Let $f\in E$ and set $F:=fH_{rs}$, where  $r,s\in \{1,\dotsc,q\}$.
Then, by Lemma~\ref{propertyL}\ref{propertyL.i},
$0=\Lambda_L(fH_{rs})=\langle L(f),H_{rs}\rangle$.
Thus $L=O$, because the matrices $\{H_{rs}\}$ form an orthonormal basis of $(\cH_q,\langle \cdot,\cdot\rangle)$. This proves that the map $L\mapsto \Lambda_L$ is injective.
\end{proof}

\begin{lem}\label{llotimes}
Let $L\in \scrL(E,\cH_q)$ be a linear map 
and $e:=\sum_{j=1}^q e_j\otimes e_j$, where $e_1,\dotsc,e_q$ is the standard basis of $\dR^q$.
Then, for any $F\in \cH_q(E)$,
\begin{align}
L_{\otimes}(F^\trn )&=(L_{\otimes}^\trn (F))^\trn ,\label{LT}\\
\Lambda_L(F)&=e^*L_{\otimes}^\trn (F)e.\label{LTe}
\end{align}
\end{lem}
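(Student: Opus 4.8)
The plan is to derive both identities from the single elementary formula
\[
  L_{\otimes}(gG)=G\otimes L(g)\qquad\text{for }g\in E,\ G\in\cH_q ,
\]
together with its analogue $L_{\otimes}^\trn(gG)=G\otimes(L(g))^\trn$ for the transposed map $L^\trn$ (where $L^\trn\in\scrL(E,\cH_q)$, $L^\trn(f):=(L(f))^\trn$, and $L_{\otimes}^\trn:=(L^\trn)_\otimes$; note $L^\trn(f)=\ov{L(f)}\in\cH_q$). To get this formula, write $G=\sum_{j,k}g_{jk}H_{jk}$ with $g_{jk}:=\langle G,H_{jk}\rangle\in\dR$; then, as elements of $E$, $\langle gG,H_{jk}\rangle=g_{jk}g$, hence $L(\langle gG,H_{jk}\rangle)=g_{jk}L(g)$, and substituting this into the definition of $L_{\otimes}$ and using bilinearity of $\otimes$ gives $L_{\otimes}(gG)=\sum_{j,k}g_{jk}\,(H_{jk}\otimes L(g))=G\otimes L(g)$.

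Because $\{f_iH_{jk}\}$ is a basis of $\cH_q(E)$, every $F\in\cH_q(E)$ lies in the span of the mappings $fH$ with $f\in E$, $H\in\cH_q$, and all maps in \eqref{LT} and \eqref{LTe} are linear in $F$; so it suffices to treat $F=fH$. For \eqref{LT}: since $f$ is real-valued, $F^\trn=fH^\trn$, and $H^\trn=\ov{H}\in\cH_q$ because $H$ is Hermitian, so $F^\trn\in\cH_q(E)$ (and the left-hand side is meaningful), and the auxiliary formula gives $L_{\otimes}(F^\trn)=H^\trn\otimes L(f)$. On the other hand $(L_{\otimes}^\trn(F))^\trn=(H\otimes(L(f))^\trn)^\trn=H^\trn\otimes L(f)$, using that the transpose acts entrywise on tensors. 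The two agree, which is \eqref{LT}.

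For \eqref{LTe} I would record the matrix identity $e^*(A\otimes B)e=\tr(A^\trn B)$ for $A,B\in M_q(\dC)$, valid since $e^*(A\otimes B)e=\sum_{i,l}(e_l^\trn Ae_i)(e_l^\trn Be_i)=\sum_{i,l}A_{li}B_{li}=\tr(A^\trn B)$, where no complex conjugates intervene precisely because $e$ has real entries. Taking $A=H$, $B=(L(f))^\trn$ and using the auxiliary formula for $L_{\otimes}^\trn$, we obtain $e^*L_{\otimes}^\trn(fH)e=\tr(H^\trn(L(f))^\trn)=\tr((L(f)H)^\trn)=\tr(L(f)H)=\langle L(f),H\rangle$, while $\Lambda_L(fH)=\langle L(f),H\rangle$ by Lemma~\ref{propertyL}\ref{propertyL.i}; this gives \eqref{LTe}. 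The computation is routine and I do not expect a genuine obstacle; the only care needed is in unwinding the notation $L^\trn$ and $L_{\otimes}^\trn$, in keeping the various transposes straight, and in the observation above that the realness of $e$ is what keeps conjugations out of the quadratic form $e^*(\cdot)e$. (Alternatively, \eqref{LT} can be obtained without passing to basis elements, by first checking $\langle F^\trn,H_{jk}\rangle=\langle F,H_{jk}^\trn\rangle$ and then expanding $H_{jk}^\trn$ in the orthonormal basis $\{H_{rs}\}$, but the reduction above is shorter.)
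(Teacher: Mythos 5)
Your proof is correct and follows essentially the same route as the paper's: both arguments come down to the Kronecker-product identities $(A\otimes B)^\trn=A^\trn\otimes B^\trn$ and $e^*(A\otimes B)e=\sum_{l,i}A_{li}B_{li}=\tr(A^\trn B)$, applied to the expansion of $F$ in terms $f_{jk}H_{jk}$. Your reduction to monomials $fH$ via the auxiliary formula $L_{\otimes}(gG)=G\otimes L(g)$ merely repackages the paper's direct computation with $F=\sum_{j,k}f_{jk}H_{jk}$ (and lets you replace the paper's sign bookkeeping $H_{jk}^\trn=\epsilon(k{-}j)H_{jk}$ by the observation that $H^\trn$ is again Hermitian), and your unwinding of the notation $L_{\otimes}^\trn=(L^\trn)_\otimes$ agrees with the paper's usage.
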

\begin{proof} First we note that we always identify $1\times 1$-matrices such as $e^*L_{\otimes}^\trn (F)e$, $e_r^* H_{jk}e_s$ etc.\ with the corresponding numbers. 
Let $F\in \cH_q(E)$.
As usual, we write $F=\sum_{j,k=1}^qf_{jk} H_{jk}$, with $f_{jk}\in E$.
For $j\in \dZ$ we set $\epsilon (j):=1$ if $j\geq 0$ and $\epsilon(j):=-1$ if $j<0$.
Then, for $j,k=1,\dotsc,q$, we have $H_{jk}^\trn =\epsilon(k{-}j) H_{jk}$ by the definition of $H_{jk}$ and hence
\[
    \langle F^\trn ,H_{jk}\rangle
    =\tr( F^\trn H_{jk})
    = \tr (F^\trn H_{jk})^\trn 
    =\epsilon(k{-}j)\, \tr( H_{jk}F)
    =\epsilon(k{-}j)\langle F,H_{jk}\rangle,
\]
so that
\[\begin{split}
    H_{jk}\otimes  L(\langle F^\trn ,H_{jk}\rangle)
    &= \epsilon(k{-}j) H_{jk}\otimes L(\langle F,H_{jk}\rangle)\\
    &= H_{jk}^\trn \otimes L(\langle F,H_{jk}\rangle)
    =[H_{jk}\otimes L^\trn (\langle F,H_{jk}\rangle)]^\trn .
\end{split}\]
This in turn implies that\, $L_{\otimes} (F^\trn )= (L_{\otimes}^\trn (F))^\trn $\,  which proves \eqref{LT}.

Now we verify \eqref{LTe}.
We abbreviate $L_{jk}:=L(\langle F,H_{jk}\rangle )=L(f_{jk})$.
Then, by the corresponding definitions,  $L_{\otimes}^\trn  (F)=\sum_{j,k=1}^q H_{jk}\otimes L_{jk}^\trn $ and $\Lambda_L(F)=\sum_{j,k=1}^q\langle L_{jk},H_{jk}\rangle$.
Therefore,
\[\begin{split}
    e^*L_{\otimes}^\trn (F)e
    &=\biggl( \sum_{r=1}^q e_r^*\otimes e_r^* \biggr)   \biggl(\sum_{j,k=1}^q H_{jk}\otimes L_{jk}^\trn \biggr) \biggl( \sum_{s=1}^q e_s\otimes e_s\biggr)\\
    & = \sum_{j,k=1}^q \sum_{r,s=1}^q (e_r^* H_{jk}e_s\otimes e_r^*L_{jk}^\trn e_s)
    =\sum_{j,k=1}^q \sum_{r,s=1}^q (e_r^* H_{jk}e_s)\cdot (e_r^*L_{jk}^\trn e_s).
\end{split}\]
Let us denote the $(n,m)$ entry of a matrix $A$ by $A_{nm}$.
Then we have $e_r^*H_{jk}e_s= (H_{jk})_{rs}$ and $e_r^*L_{jk}^\trn e_s= (L_{jk}^\trn )_{rs}=(L_{jk})_{sr}$, so that
\[\begin{split}
    e^*L_{\otimes}^\trn (F)e 
    &=\sum_{j,k=1}^q\sum_{r,s=1}^q (H_{jk})_{rs} (L_{jk})_{sr}
    =\sum_{j,k=1}^q \tr (L_{jk}H_{jk})\\
    &=\sum_{j,k=1}^q \langle L_{jk},H_{jk}\rangle
    =\Lambda_L(F).\qedhere
\end{split}\]
\end{proof}

\begin{cor}\label{LotimesimpliesL}
Suppose  $L\in \scrL(E,\cH_q)$ is a linear map such that $L_{\otimes} (F)\succeq O$ for all $F\in \cH_{q,\succeq}(E)$.
Then $\Lambda_L(F)\geq 0$ for all $F\in \cH_{q,\succeq}(E)$.
\end{cor}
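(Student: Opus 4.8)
The plan is to deduce this immediately from Lemma~\ref{llotimes}. By formula~\eqref{LTe} we have $\Lambda_L(F) = e^* L_{\otimes}^\trn(F) e$ for every $F \in \cH_q(E)$, where $e = \sum_{j=1}^q e_j \otimes e_j$ is a \emph{real} vector, so that $e^* = e^\trn$. Consequently it suffices to show that $L_{\otimes}^\trn(F) \succeq O$ as a $q^2 \times q^2$ matrix whenever $F \in \cH_{q,\succeq}(E)$; then $e^* L_{\otimes}^\trn(F) e \geq 0$ is just the nonnegativity of a quadratic form of a nonnegative matrix evaluated at a real vector.

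First I would note that the cone $\cH_{q,\succeq}(E)$ is invariant under transposition. Indeed, if $F = \sum_{j,k=1}^q f_{jk} H_{jk} \in \cH_q(E)$, then, using $H_{jk}^\trn = \epsilon(k-j) H_{jk}$ exactly as in the proof of Lemma~\ref{llotimes}, we get $F^\trn = \sum_{j,k=1}^q \epsilon(k-j) f_{jk} H_{jk} \in \cH_q(E)$, and for each $x \in \cX$ the matrix $F(x)^\trn = \overline{F(x)}$ is nonnegative precisely when $F(x)$ is. Hence $F \in \cH_{q,\succeq}(E)$ forces $F^\trn \in \cH_{q,\succeq}(E)$.

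Now I would apply the hypothesis to $F^\trn$: this yields $L_{\otimes}(F^\trn) \succeq O$. By formula~\eqref{LT}, $L_{\otimes}(F^\trn) = (L_{\otimes}^\trn(F))^\trn$, and the transpose of a nonnegative matrix is again nonnegative, so $L_{\otimes}^\trn(F) \succeq O$. Combining this with the reduction of the first paragraph gives $\Lambda_L(F) = e^* L_{\otimes}^\trn(F) e \geq 0$, which is the claim.

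There is no real obstacle here: all the substantive work has already been done in Lemma~\ref{llotimes}. The only point that deserves attention is that the transposition on $\cH_q(E)$ and the full transposition on $\cH_q \otimes \cH_q \subseteq M_{q^2}(\dC)$ are linked precisely by~\eqref{LT}, so that nonnegativity is transported correctly between the two sides, together with the observation that $e$ is real-valued, which lets one read $e^*(\cdot)e$ as a genuine quadratic form.
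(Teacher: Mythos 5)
Your proof is correct and follows essentially the same route as the paper: apply the hypothesis to $F^\trn\in\cH_{q,\succeq}(E)$, transport positivity through formula~\eqref{LT}, and conclude via~\eqref{LTe}. The only differences are that you spell out why $\cH_{q,\succeq}(E)$ is transpose-invariant (which the paper asserts without comment) and you remark on $e$ being real, which is in fact not needed since $v^*Mv\geq 0$ holds for any complex vector $v$ when $M\succeq O$.
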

\begin{proof}
Take $F\in \cH_{q\succeq}(E)$.
Since  $F^\trn $ is also in $ \cH_{q,\succeq}(E)$, we have $L_{\otimes}(F^\trn )\succeq O$ and hence $(L_{\otimes}(F^\trn ))^\trn \succeq O$.
Let $e$ be as in Lemma~\ref{llotimes}.
Using \eqref{LTe} and \eqref{LT} we obtain
\[
\Lambda_L(F)
=e^*L_{\otimes}^\trn (F)e
=e^*(L_{\otimes}(F^\trn ))^\trn e
\geq 0.  \qedhere
\]
\end{proof}

\begin{rem}
Formula \eqref{LTe} is only a special case of the general formula \eqref{genLTe} which we state here without proof.
Let $x_1,\dotsc,x_q\in \dC^q$, written as columns.
We set $X:=\sum_{r=1}^q x_re_r^\trn \in M_q(\dC)$ and $x:=Q\, \vvec (X)$, where $\vvec (X):=\sum_{r=1}^q e_r\otimes x_r\in \dR^q\otimes \dR^q$ and $Q:=\sum_{j=1}^q e_j^\trn \otimes I_q\otimes e_j$.
Then 
\begin{equation}\label{genLTe}
\Lambda_L(X^*F X)
= x^*(L_{\otimes}^\trn (F))x
\quad\text{for }F\in \cH_q(E).
\end{equation}
Note that the matrix $Q$ is a permutation matrix which exchanges the order of the Kronecker product, that is, $B\otimes A=Q^\trn (A\otimes B)Q$ for $A,B\in M_q(\dC)$.
\end{rem}

\section{Matrix moment functionals and moment functionals}\label{momentfunctionals}

\begin{dfn}\label{D1012}%
Let $\mu\in \cM_q(\cX,\fX)$ be such  that $E\subseteq L^1(\tr\mu;\dR)$.
Then  the linear map $L^\mu \colon E\to \cH_q$ defined by
\begin{equation}\label{matrixmf}
    L^\mu(f)
    :=\int_\cX f\, \dif\mu,
    \quad f\in E,
\end{equation}
is called a \emph{matrix-valued moment functional} on $E$,  briefly a \emph{matrix moment functional}.
\end{dfn}

If $L$ is a matrix moment functional, then a measure $\mu\in \cM_q(\cX,\fX)$  such  that $L=L^\mu$ is called a \emph{representing measure} of $L$.
The corresponding set is
\[
    \cM_L
    := \{\mu\in\cM_q(\cX,\fX)\colon E\subseteq L^1(\tr\mu;\dR)~ \text{ and }~ L=L^\mu\, \}.
\]
and the set of \emph{finitely atomic} representing measures of $L$ is denoted by $\cM^\mathrm{fa}_L$. 
The set of all matrix moment functionals on $E$ is
\[
    \cL_q(E)
    :=\{L^\mu\colon\mu\in\cM_q(\cX,\fX)~ \text{ such that }~ E\subseteq L^1(\tr\mu;\dR)\}.
\]
For a matrix moment functional  the following theorem describes $L_{\otimes}$ and $\Lambda_L$  in terms of a representing measure.  

\begin{thm}\label{LLmomentf}
Suppose that $L\in \scrL(E,\cH_q)$ is a matrix moment functional and $\mu\in \cM_L$.
Let $\tau$ be the trace measure of $\mu$ and $\rnd$ the Radon--Nikodym derivative of $\mu$ with respect to $\tau$.
Then, for $F\in \cH_q(E)$,
\begin{align}
    L_{\otimes}(F)&=\int_\cX (F\otimes \rnd)\, \dif\tau,\label{GFformula}\\ 
    \Lambda_L(F)&= \int_\cX \langle F,\rnd\rangle\, \dif \tau=\int_\cX \tr (F\rnd)\, \dif \tau.\label{GFformula1}
\end{align}
Further,
\begin{equation}\label{poscond}
    L_{\otimes} (F)\succeq O
    \quad\text{and}  
    \quad\Lambda_L(F)\geq 0
    \quad\text{for }F\in \cH_{q,\succeq}(E).
\end{equation}
\end{thm}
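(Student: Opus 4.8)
The plan is to prove the two integral formulas directly from the definitions, reducing everything to the scalar Radon--Nikodym representation $L(f) = \int_\cX f \rnd\, \dif\tau$, and then to obtain the positivity assertion \eqref{poscond} as a consequence, using the pointwise inequality $\rnd(x) \succeq O$ together with Corollary~\ref{LotimesimpliesL}.

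First I would prove \eqref{GFformula}. Writing $F = \sum_{j,k=1}^q f_{jk} H_{jk}$ with $f_{jk} = \langle F, H_{jk}\rangle \in E$, the definition of $L_\otimes$ gives $L_\otimes(F) = \sum_{j,k} H_{jk} \otimes L(f_{jk})$. Since $\mu \in \cM_L$ and $E \subseteq L^1(\tau;\dR)$, Definition~\ref{D1012} together with \eqref{matrixmf} and \eqref{inffmu} yields $L(f_{jk}) = \int_\cX f_{jk} \rnd\, \dif\tau$ (the scalar-function-times-measure integral, coordinatewise in the entries of $\rnd$). Substituting and pulling the sum inside the integral — justified because it is a finite sum of $L^1$-integrands, each $f_{jk}$ being integrable against $\tau$ and $\rnd$ having bounded measurable entries after normalization isn't quite automatic, so one argues entrywise and uses $|\langle F, \rnd\rangle| \le \|F\| \,\tr\rnd$ — gives $L_\otimes(F) = \int_\cX \sum_{j,k} (f_{jk} H_{jk} \otimes \rnd)\, \dif\tau = \int_\cX (F \otimes \rnd)\, \dif\tau$, using bilinearity of $\otimes$ and $\sum_{j,k} f_{jk}(x) H_{jk} = F(x)$. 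For \eqref{GFformula1} I would either repeat the same computation with the definition \eqref{defcl} of $\Lambda_L$, getting $\Lambda_L(F) = \sum_{j,k} \langle L(f_{jk}), H_{jk}\rangle = \sum_{j,k} \int_\cX \langle f_{jk}\rnd, H_{jk}\rangle\, \dif\tau = \int_\cX \langle F, \rnd\rangle\, \dif\tau$, or more slickly invoke the Remark identifying $\Lambda_L = \tr \circ\, \mathrm{m} \circ L_\otimes$ and apply $\tr \circ\, \mathrm{m}$ to \eqref{GFformula}, noting $\tr(\mathrm{m}(F \otimes \rnd)) = \tr(F\rnd) = \langle F, \rnd\rangle$ by \eqref{scalarher}; the latter route needs one to move $\tr\circ\,\mathrm{m}$ through the integral, which is fine by linearity and continuity.

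For the positivity statement \eqref{poscond}: take $F \in \cH_{q,\succeq}(E)$, so $F(x) \succeq O$ for all $x \in \cX$. Since $\rnd(x) \succeq O$ by our standing convention on the Radon--Nikodym matrix, the Kronecker product $F(x) \otimes \rnd(x) \succeq O$ for every $x$ (the tensor product of two positive semidefinite matrices is positive semidefinite). An $\cH_q\otimes\cH_q$-valued integral of a pointwise-positive-semidefinite integrand is again positive semidefinite — this follows by testing against an arbitrary vector $v$: $v^*\bigl(\int (F\otimes\rnd)\dif\tau\bigr)v = \int v^*(F(x)\otimes\rnd(x))v\, \dif\tau(x) \ge 0$. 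Hence $L_\otimes(F) \succeq O$ by \eqref{GFformula}. The inequality $\Lambda_L(F) \ge 0$ then follows either from \eqref{GFformula1} directly, since $\langle F(x), \rnd(x)\rangle = \tr(\sqrt{\rnd}\,F\sqrt{\rnd}) \ge 0$ pointwise by \eqref{scalarpos}, or — to emphasize the structural link — from Corollary~\ref{LotimesimpliesL}, which says precisely that $L_\otimes(\cH_{q,\succeq}(E)) \subseteq \cH_{q,\succeq}\otimes\cdots$ implies $\Lambda_L \ge 0$ on $\cH_{q,\succeq}(E)$.

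I expect the only real subtlety to be the integrability bookkeeping: one must check that $F \otimes \rnd$ and $\langle F,\rnd\rangle$ are genuinely $\tau$-integrable so the formulas make sense, not merely formal. This reduces to the hypothesis $E \subseteq L^1(\tr\mu;\dR) = L^1(\tau;\dR)$ applied to each entry function $f_{jk}$, combined with the fact that the entries $\rnde_{ij}$ of $\rnd$ satisfy $|\rnde_{ij}| \le \rnde_{ii}^{1/2}\rnde_{jj}^{1/2} \le \tr\rnd \le 1$ $\tau$-a.e. after the normalization built into Example~\ref{E1700}-style constructions — or, staying general, $\tr\rnd = 1$ $\tau$-a.e. by definition of the trace measure, so each $|\rnde_{ij}| \le 1$ and $|f_{jk}\rnde_{ij}| \le |f_{jk}| \in L^1(\tau)$. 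Everything else is a finite sum and linearity of the integral, so no further care is needed.
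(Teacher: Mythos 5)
Your proposal is correct and follows essentially the same route as the paper's proof: expand $F=\sum_{j,k}f_{jk}H_{jk}$, substitute $L(f_{jk})=\int_\cX f_{jk}\rnd\,\dif\tau$, interchange the finite sum with the integral to get \eqref{GFformula} and \eqref{GFformula1}, and deduce \eqref{poscond} from the pointwise positivity of $F(x)\otimes\rnd(x)$ and of $\langle F(x),\rnd(x)\rangle$ via \eqref{scalarpos}. Your added integrability check (using $\tr\rnd=1$ $\tau$-a.e.\ to bound the entries of $\rnd$) is a sound extra detail that the paper leaves implicit, but it does not change the argument.
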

\begin{proof} 
Let $F\in\cH_q(E)$.
We can write $F$ as $F=\sum_{j,k=1}^qf_{jk} H_{jk}$ with $f_{jk}\in E$.
Then, for  $j,k=1,\dotsc,q$, we have
\begin{equation}\label{auxequ}
    L(f_{jk})
    =L^\mu(f_{jk})
    =\int_\cX f_{jk}\, \dif\mu
    =\int_\cX f_{jk}\rnd\, \dif\tau
\end{equation}
and therefore
\[
    H_{jk}\otimes L(f_{jk})
    =H_{jk}\otimes\biggl(\int_\cX f_{jk}\rnd\,\dif\tau\biggr)
    =\int_\cX [(f_{jk}H_{jk})\otimes \rnd]\, \dif\tau.
\]
Using the equation $f_{jk}=\langle F,H_{jk}\rangle $ we compute
\[\begin{split}
    L_\otimes (F)
    &=\sum_{j,k=1}^q H_{jk}\otimes L(\langle F, H_{jk} \rangle)        
    =\sum_{j,k=1}^q H_{jk}\otimes L(f_{jk})\\
    &=\sum_{j,k=1}^q\int_\cX [(f_{jk} H_{jk})\otimes \rnd]\,\dif\tau
    =\int_\cX\sum_{j,k=1}^q [(f_{jk} H_{jk})\otimes \rnd]\,\dif\tau\\
    &=\int_\cX\biggl[\biggl(\sum_{j,k=1}^q f_{jk}H_{jk}\biggr)\otimes \rnd\biggr]\, \dif\tau
    =\int_\cX (F\otimes \rnd)\, \dif\tau,
\end{split}\]
which proves \eqref{GFformula}.

Next, using \eqref{auxequ}, we derive
\[\begin{split}
    \langle L(f_{jk}), H_{jk}\rangle
    &=\tr( L (f_{jk})H_{jk})
    =\tr( H_{jk} L (f_{jk}))\\
    &=\tr \biggl( H_{jk}\int_\cX f_{jk}\rnd\, \dif\tau\biggr)
    =\tr \biggl(\int_\cX f_{jk} H_{jk}\rnd\, \dif\tau\biggr)
\end{split}\]
and hence
\[\begin{split}
    \Lambda_L(F)
    &=\sum_{j,k=1}^q \langle L(\langle F,H_{jk})\rangle, H_{jk}\rangle
    =\sum_{j,k=1}^q\langle L (f_{jk}),H_{jk}\rangle\\
    &=\sum_{j,k=1}^q \tr\biggl(\int_\cX f_{jk} H_{jk}\rnd\, \dif\tau\biggr)
    =\int_\cX\tr\biggl(\biggl(\sum_{j,k=1}^q f_{jk} H_{jk}\biggr)\rnd\biggr)\, \dif\tau\\
    &=\int_\cX \tr(F\rnd)\, \dif\tau
    =\int_\cX \langle F, \rnd\rangle \, \dif\tau.
\end{split}\]
This proves \eqref{GFformula1}.

Finally, we verify \eqref{poscond}.
Suppose $F\in \cH_{q,\succeq}(E)$.
Then, by definition, $F(x)\in\cH_{q,\succeq}$ for all $x\in\cX$.
Since  $\rnd(x)\in \cH_{q,\succeq}$, we have $F(x)\otimes \rnd(x)\succeq O$ and hence $L_{\otimes} (F)=\int_\cX (F\otimes \rnd)\,\dif\tau \succeq O$\,  by \eqref{GFformula}. 

From $F(x)\succeq O$ and $\rnd(x)\succeq O$, we obtain $\langle F(x),\rnd(x)\rangle \geq 0$ for $x\in \cX$ by \eqref{scalarpos}. 
Therefore, $ \Lambda(F)=\int_\cX \langle F,\rnd\rangle\, \dif\tau\geq 0$ by \eqref{GFformula1}.
\end{proof}

Now we turn to moment functionals on the vector space $\cE$.

\begin{dfn}\label{defmf}
Let $\mu\in \cM_q(\cX,\fX) $ and suppose that $\cE\subseteq L^1(\mu;\cH_q)$.
Then the linear functional $\Lambda^\mu$ on $\cE$ defined by
\begin{equation}\label{deflammbdamu}
    \Lambda^\mu(F)
    :=\int_\cX \langle F,\dif\mu\rangle
    =\int_\cX \langle F,\rnd\rangle\dif\tau, 
\qquad F\in \cE,
\end{equation}
is called a \emph{moment functional} on $\cE$.
Here $\tau $ denotes the trace measure of $\mu$ and $\rnd$ is the Radon--Nikodym derivative of $\mu$ with respect to $\tau$.
\end{dfn}

We point out that an analogous terminology can be found in \cite{Le} under the name \emph{tracial $K$\nobreakdash-moment functional}, where reference is made to \cite{MR3011271}.

For a moment functional $\Lambda$ on $\cE$, the set of all \emph{representing measures} is
\[
    \cM_\Lambda
    :=\{\mu\in\cM_q(\cX,\fX)\colon\cE\subseteq L^1(\mu;\cH_q)~\text{ and }~ \Lambda=\Lambda^{\mu}\}
\]
and the set of \emph{finitely atomic} representing measures of $\Lambda$ is denoted by $\cM^\mathrm{fa}_\Lambda$. 
The set of all moment functionals on $\cE$ is
\[
    \fL(\cE)
    :=\{\Lambda^\mu\colon\mu\in\cM_q(\cX,\fX)~ \text{ such that }~ \cE\subseteq L^1(\mu;\cH_q)\}.
\]
Further, we let
\[
    \cE_\succeq
    :=\{ F\in \cE\colon F(x)\succeq O~ \text{ for all }~ x\in \cX\}.
\]
First we record a simple lemma.

\begin{lem}\label{L1036+}
If $\Lambda \in \fL(\cE)$, then $\Lambda (F)\geq 0$ for all $F\in \cE_\succeq$.
\end{lem}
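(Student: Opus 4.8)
The plan is to unwind the definitions and reduce everything to the pointwise nonnegativity statement \eqref{scalarpos}. Suppose $\Lambda \in \fL(\cE)$ and let $F \in \cE_\succeq$. By definition of $\fL(\cE)$, there is a measure $\mu \in \cM_q(\cX,\fX)$ with $\cE \subseteq L^1(\mu;\cH_q)$ and $\Lambda = \Lambda^\mu$. Write $\tau = \tr\mu$ for the trace measure and $\rnd$ for the Radon--Nikodym derivative of $\mu$ with respect to $\tau$; recall that by our standing convention $\rnd(x) \succeq O$ for \emph{every} $x \in \cX$, not merely $\tau$-a.e.

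Now I would simply compute: since $F \in \cE_\succeq$, we have $F(x) \succeq O$ for all $x \in \cX$. Combining $F(x) \succeq O$ with $\rnd(x) \succeq O$, inequality \eqref{scalarpos} gives $\langle F(x), \rnd(x)\rangle = \tr(\sqrt{\rnd(x)}\, F(x)\, \sqrt{\rnd(x)}) \geq 0$ for every $x \in \cX$. Hence the integrand $x \mapsto \langle F, \rnd\rangle$ is nonnegative everywhere, and since $F$ is $\mu$-integrable it lies in $L^1(\tau;\dR)$, so by monotonicity of the integral
\[
    \Lambda(F)
    = \Lambda^\mu(F)
    = \int_\cX \langle F, \rnd\rangle\, \dif\tau
    \geq 0,
\]
using \eqref{deflammbdamu} for the middle equality. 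This completes the argument.

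There is essentially no obstacle here: the statement is the moment-functional analogue of the inequality $\Lambda_L(F) \geq 0$ for $F \in \cH_{q,\succeq}(E)$ already established in \eqref{poscond}, and the only point requiring a moment's care is invoking the standing convention that the chosen version of the Radon--Nikodym matrix $\rnd$ is pointwise nonnegative on all of $\cX$, so that $\langle F(x), \rnd(x)\rangle \geq 0$ holds everywhere rather than almost everywhere — though for the purpose of the integral inequality even the a.e.\ statement would suffice.
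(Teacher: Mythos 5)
Your proof is correct and follows exactly the paper's argument: the paper likewise observes that $F(x)\succeq O$ and $\rnd(x)\succeq O$ give $\langle F(x),\rnd(x)\rangle\geq 0$ via \eqref{scalarpos}, and then concludes by \eqref{deflammbdamu}. Nothing further is needed.
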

\begin{proof} 
Since $F(x)\succeq O$ for $F\in \cE_\succeq$ and $\rnd(x)\succeq O$, the assertion follows at once by combining \eqref{deflammbdamu} and \eqref{scalarpos}. 
\end{proof}

\begin{prop}\label{equivalence}
Let $L\in \scrL(E,\cH_q)$.
Then $L$ is a matrix moment functional on $E$ (according to Definition~\ref{matrixmf}) if and only if $\Lambda_L$ is a moment functional on $\cH_q(E)$ (according to Definition~\ref{defmf}).
In this case, $\cM_L=\cM_{\Lambda_L}$.
\end{prop}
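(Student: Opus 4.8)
The plan is to prove both implications by directly relating the defining integral of a matrix moment functional to the defining integral of a moment functional on $\cH_q(E)$, using the formula \eqref{GFformula1} from Theorem~\ref{LLmomentf} as the bridge, together with the observation that a measure $\mu$ is integrable against $E$ exactly when it is integrable against $\cH_q(E)$. First I would record the elementary fact that, for $\mu\in\cM_q(\cX,\fX)$ with trace measure $\tau$ and Radon--Nikodym matrix $\rnd$, one has $E\subseteq L^1(\tr\mu;\dR)$ if and only if $\cH_q(E)\subseteq L^1(\mu;\cH_q)$. Indeed, writing $F=\sum_{j,k}f_{jk}H_{jk}\in\cH_q(E)$, the entries of $\rnd$ are bounded in modulus by $\tr\rnd$ ($\tau$-a.e.), so $\langle F,\rnd\rangle$ is a finite linear combination of the $f_{jk}$ times bounded functions, hence $\tau$-integrable whenever each $f_{jk}\in L^1(\tau;\dR)$; conversely $f\in E$ gives $fI_q\in\cH_q(E)$ with $\langle fI_q,\rnd\rangle=f\tr\rnd$, and applying the definition to suitable $F=fH_{jk}$ recovers integrability of $f$ itself. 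This settles the matching of the side conditions on $\mu$ that appear in Definitions~\ref{D1012} and~\ref{defmf}.

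Next, assuming $L=L^\mu$ is a matrix moment functional with representing measure $\mu$, formula \eqref{GFformula1} of Theorem~\ref{LLmomentf} gives precisely $\Lambda_L(F)=\int_\cX\langle F,\rnd\rangle\,\dif\tau$ for all $F\in\cH_q(E)$, which by \eqref{deflammbdamu} says exactly that $\Lambda_L=\Lambda^\mu$ as functionals on $\cH_q(E)$; combined with the first paragraph, $\mu$ satisfies the integrability requirement, so $\Lambda_L$ is a moment functional on $\cH_q(E)$ and $\mu\in\cM_{\Lambda_L}$. This also shows $\cM_L\subseteq\cM_{\Lambda_L}$.

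For the converse, suppose $\Lambda_L=\Lambda^\mu$ on $\cH_q(E)$ for some $\mu\in\cM_q(\cX,\fX)$ with $\cH_q(E)\subseteq L^1(\mu;\cH_q)$. By the first paragraph, $E\subseteq L^1(\tr\mu;\dR)$, so $L^\mu$ is a well-defined matrix moment functional; it remains to identify it with $L$. Here I would test against the elements $fH_{jk}\in\cH_q(E)$: by Lemma~\ref{propertyL}\ref{propertyL.i}, $\Lambda_L(fH_{jk})=\langle L(f),H_{jk}\rangle$, while on the other hand $\Lambda^\mu(fH_{jk})=\int_\cX\langle fH_{jk},\rnd\rangle\,\dif\tau=\langle\int_\cX f\rnd\,\dif\tau,H_{jk}\rangle=\langle L^\mu(f),H_{jk}\rangle$ (moving $H_{jk}$ and the trace through the integral, as in the proof of Theorem~\ref{LLmomentf}). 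Since $\Lambda_L=\Lambda^\mu$ and $\{H_{jk}\}$ is an orthonormal basis of $\cH_q$, we conclude $L(f)=L^\mu(f)$ for all $f\in E$, i.e.\ $L=L^\mu$ is a matrix moment functional with $\mu\in\cM_L$; this gives $\cM_{\Lambda_L}\subseteq\cM_L$, and together with the previous paragraph, $\cM_L=\cM_{\Lambda_L}$.

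I do not expect a serious obstacle here: both directions reduce to bookkeeping with the already-proved formula \eqref{GFformula1} and Lemma~\ref{propertyL}. The one point deserving slight care is the integrability equivalence in the first paragraph — specifically making rigorous that the off-diagonal entries of $\rnd$ are dominated $\tau$-a.e.\ by $\tr\rnd\in L^1(\tau)$, which follows from $\rnd(x)\succeq O$ (a nonnegative Hermitian matrix has every entry bounded in modulus by its trace) — and being careful that this argument is symmetric so that the side conditions in the two definitions genuinely coincide rather than merely one implying the other.
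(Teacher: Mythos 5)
Your proof is correct and follows essentially the same route as the paper: the forward direction is exactly the combination of \eqref{GFformula1} with \eqref{deflammbdamu}, and the converse tests $\Lambda_L=\Lambda^\mu$ against rank-one-type elements of $\cH_q(E)$ via Lemma~\ref{propertyL} (you use $fH_{jk}$ and part~\ref{propertyL.i}, the paper uses $fvv^*$ and part~\ref{propertyL.iv} — an immaterial difference). Your explicit check that $E\subseteq L^1(\tr\mu;\dR)$ is equivalent to $\cH_q(E)\subseteq L^1(\mu;\cH_q)$ (using $\tr\rnd=1$ $\tau$-a.e.\ and the entrywise bound $\lvert\rnde_{ij}\rvert\leq\tr\rnd$) is a point the paper leaves implicit, and is a welcome addition.
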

\begin{proof}
If $L$ is a matrix moment functional and $\mu\in \cM_L$, then combining formulas \eqref{GFformula1} and \eqref{deflammbdamu} yields $\Lambda_L=\Lambda^\mu$, so that $\Lambda_L\in \fL(\cE)$ and $\mu\in \cM_{\Lambda_L}$. 

Conversely, suppose  $\Lambda_L\in \fL(\cE)$ and $\mu\in \cM_{\Lambda_L}$.
As usual, let $\rnd$ be the Radon--Nikodym derivative  with respect to the trace measure $\tau$ of $\mu$.
Using Lemma~\ref{propertyL}\ref{propertyL.iv} and formulas \eqref{deflammbdamu} 
and \eqref{matrixmf} 
we derive
\[\begin{split}
    v^* L(f)v
    &=\Lambda_L(fvv^*)
    =\Lambda^\mu(fvv^*)
    =\int_\cX \langle fvv^*,\rnd\rangle\, \dif\tau
    =\int_\cX \tr ((fvv^*)\rnd)\, \dif\tau\\
    &=\int_\cX f\, \tr (v^*\rnd v)\, \dif\tau
    =\int_\cX f\, v^*\rnd v\, \dif\tau
    =\int_\cX v^*(f\rnd )v\, \dif\tau\\
    &=v^*\biggl(\int_\cX f\rnd \dif\tau\biggr)v
    =v^*\biggl(\int_\cX f\dif\mu\biggr)v
    =v^*L^\mu(f)v
\end{split}\]
for all $f\in E$ and $v\in\dC^q$.
This implies that $L=L^\mu$ and $\mu\in\cM_L$.
\end{proof}

Proposition~\ref{equivalence} shows that the study of a matrix moment functional $L$ on $E$ is equivalent to the study of the moment functional $\Lambda_L$ on the special space $\cE:=\cH_q(E)$.
In the sequel  we will study moment functionals $\Lambda$ on a general vector space $\cE$.

In Section~\ref{RTtheorem} we prove that each moment functional has a finitely atomic representing measure.
Hence the following example is important.

\begin{exm}[Finitely atomic measure---Example~\ref{E1700} continued]\label{finatomic}
Let $k\in \dN$, $x_1,\dotsc,x_k\in \cX$, and $M_1,\dotsc,M_k\in \cHp$.
Suppose that $M_j\neq 0$ for  $j=1,\dotsc,k$.
Define a measure $\mu\in \cM_q(\cX,\fX)$ by
\[
    \mu
    :=\sum\nolimits_{j=1}^k M_j\delta_{x_j}.
\]
The trace measure $\tau$ and the Radon--Nikodym matrix $\rnd $ have been described in  Example~\ref{E1700}.
By \eqref{inttrace}, the moment functional $\Lambda^\mu$ on $\cE$  is given by 
\begin{equation}\label{lqf}
\Lambda^\mu (F)=\sum_{j=1}^k \tr (F(x_j)  M_j)\quad\text{for } F\in \cE.
\end{equation}
Now we consider the corresponding  matrix moment functional $L:=L^\mu$ on $E$.
Then, $L(f)=\sum_{j=1}^k f(x_j)M_j$ for $f\in E$.
Inserting the expressions for $\tau$ and $\rnd $ into \eqref{GFformula}--\eqref{GFformula1} we obtain for any $F\in \cH_q(E)$,  
\begin{align}\label{tracefor}
L_{\otimes}(F)&=\sum_{j=1}^k F(x_j)\otimes M_j ,\\
\Lambda_L(F)&=\sum_{j=1}^k \tr (F(x_j)  M_j).\label{lqf1}
\end{align}
Note that \eqref{lqf} and \eqref{lqf1} are the same formulas.

It should be emphasized  that  $L_{\otimes}$ and  $\Lambda_L$ depend only on the  linear mapping $L\in \scrL(E,\cH_q)$. 
Thus, if $\tilde{\mu}$ is another (not necessarily 
finitely 
atomic) representing measure for the matrix moment functional $L=L^{\mu}$,  the corresponding mapping  $L_{\otimes}$ and functional $\Lambda_L$ are still given by \eqref{tracefor} and \eqref{lqf1}.

Let us return to the  functional $\Lambda^\mu$ on $\cE$.
From Lemma~\ref{tracezero}, $(i)\leftrightarrow(vi)$, it follows that an element $F\in \cE_\succeq$ satisfies $\Lambda^\mu(F)=0$ if and only if for all $j=1,\dotsc,k$  the operator $F(x_j)$ acting  on $\dC^q=\ran M_j\,\oplus\, \ker M_j$ has a block matrix representation 
\begin{equation}\label{formFxj}
F(x_j)=
\begin{pmatrix}
O&O \\
O&\tilde{F}(x_j)
\end{pmatrix}.
\end{equation}
Note that ${\tilde{F}}(x_j)\succeq O$\, on $ \ker M_j$, because $F\in \cE_\succeq$.
(In the case\, $\ker M_j=\{0\}$, \eqref{formFxj} should be read as $F(x_j)=O$.)
\end{exm}

\section{The matricial Richter--Tchakaloff theorem}\label{RTtheorem}
The scalar Richter--Tchakaloff theorem asserts that each moment functional on a finite-dimensional vector space of functions admits a finitely atomic representing measure.
The following theorem and its corollaries are counter-parts of this fundamental result in the matricial case.

\begin{thm}\label{richter}
Suppose that $(\cX,\fX)$ is a measurable space and  $\cE$ is a finite-dimensional real vector space of measurable mappings $F\colon \cX\to \cH_q$.
Let $\mu\in\cM_q(\cX,\fX)$ be such that $\cE\subseteq L^1(\mu;\cH_q)$.
Recall that $\Lambda^\mu$ denotes the corresponding moment functional, that is, $\Lambda^\mu(F) :=\int_\cX \langle F,\dif\mu\rangle$ for $F\in \cE$. Suppose $\Lambda^\mu \neq 0$.

Then there exists an $N$\nobreakdash-atomic  measure\, $\nu=\sum_{j=1}^NM_j\delta_{x_j}$, where\,
\begin{align}\label{Nestimate}
N\leq \dim \cE,
\end{align} $x_1,\dotsc,x_N\in \cX$, $M_1,\dotsc,M_N\in \cH_{q,\succeq}$,  $\rank M_j=1$ for all $j=1,\dotsc,N$,
such that $\Lambda^\mu=\Lambda^\nu$. That is, there are nonzero vectors $v_1,\dotsc,v_N\in \dC^q$ such that $M_j=v_jv_j^*$ for $j=1,\dotsc,N$ and
\begin{equation}\label{finiteat}
    \int_\cX \langle F,\dif\mu\rangle
    =\int_\cX \langle F,\dif\nu\rangle
    \equiv \sum_{j=1}^N \tr(F(x_j)M_j)\equiv \sum_{j=1}^N v_j^*F(x_j)v_j\quad\text{for all }~~F\in\cE.
\end{equation}

Moreover, if  $\cY\in\fX$ is  such that $\mu(\cX\setminus \cY)=O$, then the points $x_1,\dotsc, x_N$ can be chosen from $\cY$.
\end{thm}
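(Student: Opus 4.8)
The plan is to reduce the matricial Richter--Tchakaloff theorem to the scalar one by a ``linearization'' trick: replace the $\cH_q$-valued measure $\mu$ by a scalar measure on the enlarged space $\cX\times\dP^{q-1}$ (or on $\cX\times S$, where $S$ is the unit sphere of $\dC^q$), so that Hermitian matrix-valued integrands become ordinary scalar functions via the map $(x,v)\mapsto v^*F(x)v$. First I would construct from $\mu$ a positive \emph{scalar} measure $\rho$ on $\cX\times S$ as follows: using the Radon--Nikodym matrix $\rnd(x)\succeq O$ with respect to $\tau=\tr\mu$, write $\rnd(x)=\sum_{i=1}^q \lambda_i(x)\,u_i(x)u_i(x)^*$ by a measurable spectral decomposition (measurable eigenvalue/eigenvector selection is available since $\rnd$ is a measurable Hermitian matrix field), and set $\rho := \sum_{i=1}^q \lambda_i(x)\,\dif\tau(x)\otimes\delta_{u_i(x)}$ — more precisely $\rho$ is the pushforward of $\sum_i \lambda_i\,\dif\tau$ under $x\mapsto(x,u_i(x))$, summed over $i$. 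Then for every $F\in\cE$ one checks directly that
\[
    \int_\cX\langle F,\dif\mu\rangle
    =\int_\cX\langle F(x),\rnd(x)\rangle\,\dif\tau(x)
    =\int_{\cX\times S} v^*F(x)v\,\dif\rho(x,v),
\]
because $\langle F(x),\rnd(x)\rangle=\sum_i\lambda_i(x)\,u_i(x)^*F(x)u_i(x)$.

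Next I would apply the scalar Richter--Tchakaloff theorem on the measurable space $(\cX\times S,\fX\otimes\fB(S))$ to the finite-dimensional vector space of scalar functions $\widetilde{\cE}:=\{\,(x,v)\mapsto v^*F(x)v : F\in\cE\,\}$. Here $\dim\widetilde{\cE}\le\dim\cE$ (the map $F\mapsto(\,(x,v)\mapsto v^*F(x)v)$ is linear), and the scalar functional $\widetilde\Lambda(g):=\int g\,\dif\rho$ is nonzero since $\Lambda^\mu\neq0$. The scalar theorem yields points $(x_1,v_1),\dots,(x_N,v_N)\in\cX\times S$ and weights $c_1,\dots,c_N>0$ with $N\le\dim\widetilde{\cE}\le\dim\cE$ such that $\int_{\cX\times S} g\,\dif\rho=\sum_{j=1}^N c_j\,g(x_j,v_j)$ for all $g\in\widetilde{\cE}$. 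Setting $M_j:=c_j\,v_jv_j^*\in\cH_{q,\succeq}$, which has rank one, and $\nu:=\sum_{j=1}^N M_j\delta_{x_j}$, formula \eqref{inttrace} gives $\int_\cX\langle F,\dif\nu\rangle=\sum_j c_j\,v_j^*F(x_j)v_j=\sum_j c_j\,g_F(x_j,v_j)=\widetilde\Lambda(g_F)=\Lambda^\mu(F)$ for all $F\in\cE$, which is \eqref{finiteat} after absorbing $\sqrt{c_j}$ into $v_j$. For the last assertion, if $\mu(\cX\setminus\cY)=O$ then $\tau(\cX\setminus\cY)=0$, hence $\rho$ is supported (up to a null set) on $\cY\times S$; restricting attention to that measurable subset and applying the scalar theorem there forces $x_1,\dots,x_N\in\cY$.

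Two technical points deserve care, and the main obstacle is the first of them: the \textbf{measurable spectral decomposition} of the Radon--Nikodym matrix $\rnd$. One must produce measurable functions $\lambda_i\colon\cX\to[0,\infty)$ and $u_i\colon\cX\to S$ with $\rnd(x)=\sum_i\lambda_i(x)u_i(x)u_i(x)^*$ for all $x$; this is standard (eigenvalues of a measurable Hermitian matrix field are measurable, and one can select eigenvectors measurably, e.g.\ by a measurable-selection argument or by working with spectral projections), but it must be invoked carefully on a general measurable space, not merely a Polish one. An alternative that sidesteps eigenvectors entirely is to define $\rho$ on $\cX\times S$ directly by $\dif\rho(x,v):= q\,(v^*\rnd(x)v)\,\dif\tau(x)\,\dif\sigma(v)$ where $\sigma$ is the normalized rotation-invariant measure on $S$, using the identity $q\int_S (v^*Av)(v^*Bv)\,\dif\sigma(v)$-type formulas — but this only recovers $\tr(A)\tr(B)$-type quantities rather than $\langle A,B\rangle=\tr(AB)$, so it does not directly give the integrand $v^*Fv$; hence the spectral approach (or Kimsey's polynomial-case device adapted here) seems necessary. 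The second, minor, point is to ensure $\widetilde{\cE}$ genuinely sits inside the measurable functions on $\cX\times S$ with $\rho\in L^1$, which is immediate since each $F\in\cE\subseteq L^1(\mu;\cH_q)$ translates to $\langle F,\rnd\rangle\in L^1(\tau)$, i.e.\ $g_F\in L^1(\rho)$. With these in hand, the reduction is clean and the cardinality bound $N\le\dim\cE$ and the localization to $\cY$ both come for free from the scalar theorem.
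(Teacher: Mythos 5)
Your reduction is a genuinely different route from the paper's: you linearize first, passing to a scalar measure $\rho$ on the sphere bundle $\cX\times S$ (with $S$ the unit sphere of $\dC^q$) via a measurable rank-one decomposition of $\rnd$, and then invoke the scalar Richter--Tchakaloff theorem once on the product space. The paper instead applies the scalar theorem on $\cX$ itself to the space $V=\{\langle F,\rnd\rangle\colon F\in\cE\}\subseteq L^1(\tau;\dR)$, obtains a finitely atomic intermediate measure $\tilde\nu=\sum_j c_j\rnd(t_j)\delta_{t_j}$, decomposes the finitely many matrices $c_j\rnd(t_j)$ spectrally (pointwise, so no measurable selection is needed), and then uses Carath\'eodory's theorem on the convex cone generated by the functionals $\ell_{u,x}(F)=u^*F(x)u$ in $\cE^*$ to cut the number of rank-one atoms down to $\dim\cE$. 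Your measurability worry is in fact surmountable: one can take $\rnd(x)=\sum_{i=1}^q w_i(x)w_i(x)^*$ with $w_i(x):=\sqrt{\rnd(x)}\,e_i$, which is measurable because $A\mapsto\sqrt{A}$ is continuous on $\cH_{q,\succeq}$, and normalize; no eigenvector selection on a general measurable space is required.

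The genuine gap is the step you dismiss as ``immediate'': the claim that $\langle F,\rnd\rangle\in L^1(\tau;\dR)$ implies $g_F\in L^1(\rho)$. The paper's definition of $\cE\subseteq L^1(\mu;\cH_q)$ requires only that the \emph{scalar} function $\langle F,\rnd\rangle=\sum_i\lambda_i\,u_i^*Fu_i$ be absolutely $\tau$-integrable, whereas $\int|g_F|\,\dif\rho=\sum_i\int_\cX\lambda_i\,\lvert u_i^*Fu_i\rvert\,\dif\tau$ requires absolute integrability of each summand; cancellation between the eigendirections of $\rnd$ can destroy this. Concretely, take $\cX=\dN$, $\tau(\{n\})=2^{-n}$, $\rnd\equiv\tfrac12 I_2$ (so $\mu(\{n\})=2^{-n-1}I_2$), and $\cE=\Lin\{I_2,\;f\cdot(e_{11}-e_{22})\}$ with $f(n)=2^n$. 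Then $\langle aI_2+bf(e_{11}-e_{22}),\rnd\rangle=a$ is integrable for all $a,b$, so $\cE\subseteq L^1(\mu;\cH_q)$ and $\Lambda^\mu\neq0$, yet with $u_1=e_1$, $u_2=e_2$, $\lambda_1=\lambda_2=\tfrac12$ one gets $\int\lvert g_F\rvert\,\dif\rho\geq\tfrac12\sum_n2^{-n}\lvert a+b2^n\rvert=\infty$ whenever $b\neq0$. So $\widetilde\Lambda$ is not defined on all of $\widetilde\cE$ and the scalar theorem cannot be applied on $\cX\times S$. To repair this you would either have to strengthen the integrability hypothesis (e.g.\ $\lVert F\rVert\,\in L^1(\tau)$ for $F\in\cE$), or first pass to a finitely atomic measure by applying the scalar theorem to $V$ on $\cX$ — which is exactly the paper's order of operations and is why it never meets this obstacle.
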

\begin{proof}
Let $\tau $ be the trace measure of $\mu$ and $\rnd $  the Radon--Nikodym derivative of $\mu$ with respect to  $\tau$. 
Clearly,
\[
V
:=\{\langle F,\rnd \rangle\colon F\in \cE\}
\]
is a linear subspace of $L^1(\tau;\dR)$ with finite dimension $\dim V \leq m:=\dim \cE$.
Now we  apply the scalar Richter--Tchakaloff theorem \cite[Thm.~1.24]{Schm17} to the measurable space $(\cX,\fX)$, the scalar measure $\tau\in\cM_1(\cX,\fX)$, and the linear functional $L^\tau$ on the finite-dimensional  space $ V $ of $\tau$-integrable real-valued functions  defined by  
\begin{equation}\label{defltau}
L^\tau(f):=\int_\cX f\dif\tau, ~~~ f\in V.
\end{equation}
According to \cite[Thm.~1.24]{Schm17}, then there exist an integer $n\in\dN$, $n\leq \dim V$, numbers $c_1,\dotsc,c_n\in[0,\infty)$, and points $t_1,\dotsc,t_n\in\cX$ such that
\begin{equation}\label{T124.1}
    L^\tau(f)
    =\sum_{j=1}^n~ c_jf(t_j)~~~
    \text{for all }~~f\in V.
\end{equation}
Now we set
\begin{equation}\label{defnu}
    \tilde{\nu}
    :=\sum_{j=1}^n\, R_j \delta_{t_j},\quad\text{where }~~ R_j:=c_j\rnd (t_j)\text{ for }j=1,\dotsc, n.
\end{equation}
Since $\rnd (x)\succeq O$ for all $x\in \cX$ and $c_j\geq 0$,  $R_j\succeq O $ for $j=1,\dotsc,n$.
Hence $\tilde{\nu}\in\cM_q(\cX,\fX)$.

Consider an arbitrary $F\in \cE$.
Then $f:=\langle F,\rnd \rangle$ belongs to $ V $.
Therefore, using \eqref{T124.1}  and \eqref{defnu} we obtain
\[\begin{split}
    \Lambda^\mu(F)
    &=\int_\mathcal{X}\langle F,\rnd \rangle\dif\tau
    =\int_\cX f\dif\tau
    =L^\tau(f) 
    =\sum_{j=1}^n c_j f(t_j)\\
    &=\sum_{j=1}^n c_j\langle F(t_j),\rnd (t_j)\rangle
    =\sum_{j=1}^n~ \langle F(t_j),R_j\rangle
    =\Lambda^{\tilde{\nu}} (F).
\end{split}\]
This proves that $\Lambda^\mu=\Lambda^{\tilde{\nu}}$.

Each matrix $R_j\in \cH_{q,\succeq}$ has an orthonormal basis of eigenvectors.
Thus, for each $j\in\{1,\dotsc,n\}$, there exist numbers $\lambda_{j,1},\dotsc,\lambda_{j,q}\in[0,\infty)$ and vectors $u_{j,1},\dotsc,u_{j,q}\in\dC^q$ such that $R_j=\sum_{i=1}^q\lambda_{j,i}u_{j,i}u_{j,i}^*$.
Hence, for $F\in \cE$,
\[\begin{split}
    \Lambda^\mu(F)&=\sum_{j=1}^n~ \langle F(t_j),R_j\rangle
    =\sum_{j=1}^n\sum_{i=1}^q\lambda_{j,i}\langle F(t_j),u_{j,i}u_{j,i}^*\rangle\\
    &  =\sum_{j=1}^n\sum_{i=1}^q\lambda_{j,i}\, \tr (F(t_j)u_{j,i}u_{j,i}^*)
    =\sum_{j=1}^n\sum_{i=1}^q\lambda_{j,i}\, \tr (u_{j,i}^*F(t_j)u_{j,i})\\
    &=\sum_{j=1}^n\sum_{i=1}^q\lambda_{j,i}\,  u_{j,i}^*F(t_j)u_{j,i}.
\end{split}\]
This shows that  $\Lambda^\mu$  is a non-negative  combination of linear functionals $\ell_{u,x}$ on $\cE$ which are defined by $\ell_{u,x}(F)=u^*F(x)u$, $F\in \cE$, for $x\in \cX$ and $u\in \dC^q$.
Let $\fC$ denote the convex cone in the dual vector space  $\cE^*$ generated by these functionals $\ell_{u,x}$.
Since $\dim(\cE^*)=m$, Carath\'eodory’s theorem (see e.\,g.\ \cite[Thm.~A.35(i), p.~512]{Schm17}), applied to the cone $\fC$, implies that $\Lambda^\mu\in \fC$ is a non-negative combination of at most $m$ functionals $\ell_{u,x}$.
Hence, since $\Lambda^\mu\neq 0$,  there exist $N\in \dN$, $N\leq m$, numbers $\lambda_1,\dotsc,\lambda_N\in(0,\infty)$, non-zero vectors $u_1,\dotsc,u_N\in\dC^q$, and points $x_1,\dotsc,x_N\in\cX$ such that $\Lambda^\mu=\sum_{j=1}^{N}\lambda_j \ell_{u_j,x_j}$.
We set $v_j:=\lambda_{j}^{1/2}u_{j}$ and $M_j:=v_jv_j^*$ for $j=1,\dotsc, N$ and define $\nu=\sum_{j=1}^N M_j\delta_{x_j}$.
Then $\rank M_j=1$ and for $F\in \cE$ we have 
\[
    \Lambda^\mu(F)
    =\sum_{j=1}^N \ell_{v_j,x_j}(F)
    =\sum_{j=1}^N v_j^*F(x_j)v_j
    =\sum_{j=1}^N \tr(F(x_j)v_jv_j^*)
    =\Lambda^\nu(F),
\]
which proves \eqref{finiteat}.

We show the last assertion. 
Let $\fY:=\{ X\cap\cY: X\in\fX\, \}$.
Then $\fY$ is a sub-$\sigma$\nobreakdash-algebra of $\fX$ on $\cY$ and $(\cY,\fY)$ is a measurable space.
Set $\tilde\mu:=\mu\lceil \fY$ and $\tilde{\cE}:=\{ \tilde{F}:=F \lceil \cY\colon F\in \cE\,  \}$.
Then $\tilde\mu\in\cM_q(\cY,\fY)$.
Since $\mu(\cX\setminus \cY)=O$ by assumption,  $\Lambda^\mu (F)=\Lambda^{\tilde{\mu}}(\tilde{F})$ for $F\in \cE$.
Therefore, applying the first assertion to the functional $\Lambda^{\tilde{\mu}}$ on $\tilde{\cE}$ it follows that we can choose the atoms $x_1,\dotsc,x_N$ from the set $\cY$.
\end{proof}

\begin{rem}
In the first part of the preceding proof we have shown the following:
If the functional $L^\tau$ on  $V=\{\langle F,\rnd \rangle\colon F\in \cE\}$ defined by \eqref{defltau} can be given by a (scalar) measure with $n$ atoms, then $\Lambda$ has a representing measure of $\cM_q(\cX,\fX)$ with $n$ atoms.
Depending on $\mu$ and $\cE$ this leads in many cases to better estimates than given in \eqref{Nestimate}.
\end{rem}

The following corollary is a version of the Richter--Tchakaloff theorem  for matrix moment functionals on $E$.

\begin{cor}\label{richtercor}
Suppose   $\mu\in\cM_q(\cX,\fX)$ and  $E$ is a finite-dimensional real subspace of $L^1(\tr\mu;\dR)$.
Let $L^\mu \colon E\to \cH_q$ be the matrix moment functional on $E$ defined by
\[%
    L^\mu(f)
    :=\int_\cX f(x)\, \dif\mu,
    \quad f\in E.
\]%
Suppose  $L^\mu\neq 0$.
Then there exists an $N$\nobreakdash-atomic measure $\nu=\sum_{j=1}^N\, M_j\delta_{x_j}$, with
\begin{equation}\label{estE}
    N
    \leq(\dim E)\, q^2,
\end{equation}
$x_1,\dotsc,x_N\in \cX$,  $M_1,\dotsc,M_N\in \cH_{q,\succeq}$, $\rank M_j=\dotsb=\rank M_N=1$  such that $L^\mu=L^\nu$,  that is,
\begin{equation}\label{Lrepnu} 
    \int_\cX f(x)\, \dif\mu
    =\int_\cX f(x) \, \dif\nu
    \equiv \sum_{j=1}^NM_jf(x_j)\quad\text{for all } f\in E.
\end{equation}
\end{cor}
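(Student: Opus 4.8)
The plan is to reduce Corollary~\ref{richtercor} to Theorem~\ref{richter} via the correspondence established in Section~\ref{momentfunctionals}. Given the matrix moment functional $L=L^\mu$ on $E$, form the associated moment functional $\Lambda_L$ on $\cE:=\cH_q(E)$; by Proposition~\ref{equivalence} this $\Lambda_L$ is a moment functional with $\mu\in\cM_{\Lambda_L}$, and $\dim\cE=q^2\dim E$ by the dimension count in Section~\ref{prelim}. Since $L\neq O$, Lemma~\ref{propertyL}\ref{propertyL.i} (applied with $F=fH_{rs}$) shows $\Lambda_L\neq 0$. So Theorem~\ref{richter} applies: there is an $N$-atomic measure $\nu=\sum_{j=1}^N M_j\delta_{x_j}$ with $N\leq\dim\cE=(\dim E)q^2$, each $M_j=v_jv_j^*$ of rank one, $x_j\in\cX$, and $\Lambda_L=\Lambda^\nu=\Lambda^\mu$ (the first equality on $\cH_q(E)$).

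The remaining point is to upgrade the equality $\Lambda^\nu=\Lambda^\mu$ of functionals on $\cH_q(E)$ to the equality $L^\nu=L^\mu$ of maps $E\to\cH_q$. This is exactly the ``conversely'' direction of the proof of Proposition~\ref{equivalence}: one has $\Lambda_{L^\mu}=\Lambda^\mu=\Lambda^\nu=\Lambda_{L^\nu}$ on $\cH_q(E)$ (using that $\Lambda^\nu$ is precisely $\Lambda_{L^\nu}$ by the forward direction of Proposition~\ref{equivalence} applied to the finitely atomic $\nu$), and then by Lemma~\ref{preliminaryL} the map $L\mapsto\Lambda_L$ is injective, so $L^\mu=L^\nu$. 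Alternatively, apply Lemma~\ref{propertyL}\ref{propertyL.iv}: $v^*L^\mu(f)v=\Lambda_{L^\mu}(fvv^*)=\Lambda_{L^\nu}(fvv^*)=v^*L^\nu(f)v$ for all $v\in\dC^q$, whence $L^\mu(f)=L^\nu(f)$ for all $f\in E$. Finally, $L^\nu(f)=\sum_{j=1}^N f(x_j)M_j=\sum_{j=1}^N M_j f(x_j)$ since the $f(x_j)$ are scalars, giving \eqref{Lrepnu}.

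I do not expect a genuine obstacle here; the corollary is essentially a translation statement, and all the required machinery (the isomorphism $L\mapsto\Lambda_L$, the identity $\cM_L=\cM_{\Lambda_L}$, and the dimension formula $\dim\cH_q(E)=q^2\dim E$) is already in place. The only mild subtlety is bookkeeping: one must check that the hypotheses of Theorem~\ref{richter} are met, namely that $\cE=\cH_q(E)$ is a finite-dimensional space of measurable $\cH_q$-valued maps contained in $L^1(\mu;\cH_q)$ — finite-dimensionality is the dimension formula, measurability follows since each $F=\sum f_{jk}H_{jk}$ with $f_{jk}\in E$ measurable, and $\cH_q(E)\subseteq L^1(\mu;\cH_q)$ because $E\subseteq L^1(\tr\mu;\dR)$ and $\langle F,\rnd\rangle=\sum_{j,k}f_{jk}(H_{jk}$-entries of $\rnd)$ is then $\tau$-integrable. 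If desired, one could also record the sharper rank-one conclusion and note that the estimate \eqref{estE} could be improved in special cases exactly as in the Remark following Theorem~\ref{richter}, but for the statement as given the above suffices.
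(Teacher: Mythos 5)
Your proposal is correct and follows essentially the same route as the paper: apply Theorem~\ref{richter} to the associated moment functional on $\cE=\cH_q(E)$ (using $\dim\cH_q(E)=q^2\dim E$) and transfer the conclusion back to $L$ via Proposition~\ref{equivalence}. You merely supply some details the paper leaves implicit, such as checking $\Lambda_L\neq 0$ and the integrability hypothesis $\cH_q(E)\subseteq L^1(\mu;\cH_q)$.
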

\begin{proof}
We apply Theorem~\ref{richter} to the moment functional $\Lambda^\mu$ on $\cE:=\cH_q(E)$.
Clearly, $\dim \cE=(\dim E)\, q^2$.
Then there is an $N$\nobreakdash-atomic measure $\nu=\sum_{j=1}^N\, M_j\delta_{x_j}$, having the properties stated above, such that $\Lambda^\mu=\Lambda^\nu$.
By Proposition~\ref{equivalence} this implies that $L^\nu=L^\mu$ and the equations \eqref{estE} and \eqref{Lrepnu} follow. 
\end{proof}

The next corollary contains a bound \eqref{bound} for the number of atoms in terms of the rank of the matrix moment functional.

\begin{cor}
Suppose  $L$ is a matrix moment functional on $E$ such   that $r:=\rank L\geq1$.
Let $n:=\dim E$.
Then there exist $k\in\dN$, vectors $v_1,\dotsc,v_k\in\dC^q$, and points $x_1,\dotsc,x_k\in\cX$ such that $k\leq nr$  and $L=L^\nu$ with $\nu=\sum_{j=1}^k v_jv_j^*\, \delta_{x_j}$, that is, $L(f)=\sum_{j=1}^{k} f(x_j)v_jv_j^*$ for $f\in E$.
In particular,
\begin{equation}\label{bound}
    k
    \leq nr
    \leq n\min\{n,q^2\}
    \leq n^2.
\end{equation}
\end{cor}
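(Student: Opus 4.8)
The plan is to sharpen the matricial Richter--Tchakaloff argument by localising everything to $R:=\ran L$, a subspace of $\cH_q$ of dimension $r$. Since $R$ is the image of the $n$-dimensional space $E$ under $L$ and sits inside the $q^2$-dimensional space $\cH_q$, we have $r\le\min\{n,q^2\}$ from the outset, so the numerical chain in \eqref{bound} follows at once from the estimate $k\le nr$; establishing the latter is the whole point.

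The key step -- and the one I expect to be the main obstacle -- is to produce a representing measure $\mu\in\cM_L$ whose Radon--Nikodym matrix $\rnd$ (with respect to its trace measure $\tau$) takes values in $R$ for $\tau$-almost every point of $\cX$. The obvious candidates do not obviously work: an arbitrary $\mu\in\cM_L$, and even the finitely atomic $\nu_0=\sum_jM_j\delta_{x_j}$ delivered by Corollary~\ref{richtercor}, need not have weights in $R$ -- all one knows a priori is that the \emph{values} $L(f)=\int_\cX f\,\dif\mu$ lie in $R$. To extract such a $\mu$ one has to exploit the interplay between $\ran L=R$ and the evaluation functionals at the support points; for instance, starting from $\nu_0$, one merges support points having the same restriction to $E$ and, in the resulting finite linear-algebra picture $\sum_jf(x_j)M_j\in R$ for all $f\in E$, passes to a representation whose weight matrices lie in $R$. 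I would isolate this as a separate lemma.

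Granting such a $\mu$, the rest runs as in the proof of Theorem~\ref{richter}, but on a smaller function space. If $B\in\cH_q$ satisfies $B\perp R$, then $\langle B,\rnd\rangle=0$ $\tau$-a.e., so $W:=\{\langle B,\rnd\rangle\colon B\in\cH_q\}\subseteq L^1(\tau;\dR)$ has $\dim W\le r$; consequently, writing $F\in\cH_q(E)$ in coordinates over the $H_{jk}$, the space $V:=\{\langle F,\rnd\rangle\colon F\in\cH_q(E)\}=\Lin\{f\langle B,\rnd\rangle\colon f\in E,\,B\in\cH_q\}$ satisfies $\dim V\le(\dim E)(\dim W)\le nr$. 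Applying the scalar Richter--Tchakaloff theorem (together with Carath\'eodory's theorem, as in the proof of Theorem~\ref{richter}) to the moment functional $h\mapsto\int_\cX h\,\dif\tau$ on $V$ yields points $t_1,\dots,t_k\in\cX$ with $k\le\dim V\le nr$ and coefficients $c_j\ge0$ such that $\int_\cX h\,\dif\tau=\sum_jc_jh(t_j)$ for all $h\in V$. Since $\langle F,\rnd\rangle\in V$ for every $F\in\cH_q(E)$, the nonnegative $\cH_q$-valued measure $\nu:=\sum_jc_j\rnd(t_j)\delta_{t_j}$ satisfies $\Lambda^\nu(F)=\sum_jc_j\langle F,\rnd\rangle(t_j)=\int_\cX\langle F,\rnd\rangle\,\dif\tau=\Lambda_L(F)$ for all $F\in\cH_q(E)$ by Theorem~\ref{LLmomentf}, whence $\Lambda^\nu=\Lambda_L$ and therefore $L=L^\nu$ by the isomorphism of Lemma~\ref{preliminaryL}. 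Finally, decomposing each nonnegative weight $c_j\rnd(t_j)$ into rank-one summands $vv^*$ (absorbing positive scalars into the vectors), exactly as in the proof of Theorem~\ref{richter}, brings $\nu$ into the asserted form $\sum_jv_jv_j^*\delta_{x_j}$, and \eqref{bound} is immediate from $r\le\min\{n,q^2\}$.
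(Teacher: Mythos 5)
The decisive step of your plan --- producing a representing measure $\mu\in\cM_L$ whose Radon--Nikodym matrix $\rnd$ takes values in $R:=\ran L$ $\tau$\nobreakdash-a.e.\ --- is a genuine gap, and the lemma you propose to isolate is false. Since $\rnd(x)\succeq O$ is forced, such a $\mu$ can exist only if the cone $R\cap\cH_{q,\succeq}$ is large enough to carry the full mass of a representing measure, and nothing guarantees this when $E$ contains no function that is nonnegative on $\cX$. Concretely, take $q=2$, $\cX=\dR$, $E=\Lin\{x\}$ and $L=L^\mu$ with $\mu=e_1e_1^*\delta_1+e_2e_2^*\delta_{-1}$, so that $L(x)=\left(\begin{smallmatrix}1&0\\0&-1\end{smallmatrix}\right)$. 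Then $R=\Lin\{L(x)\}$ is one-dimensional and $R\cap\cH_{2,\succeq}=\{O\}$, so \emph{no} nonzero positive $\cH_2$-valued measure has density valued in $R$; your merging procedure cannot produce one. (Orthogonally projecting $\rnd$ onto $R$ preserves the identities $\int_\cX f\,\dif\mu=L(f)$ but destroys positivity, which is why the obvious fix fails.)

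Worse, the same example shows that the conclusion $k\leq nr$ itself fails there: $n=r=1$, yet $L(x)$ is indefinite of rank two and hence cannot equal $x_1v_1v_1^*$ for a single atom. So the obstacle you identified is not an artifact of your route. The paper argues differently --- it chooses an orthonormal basis $H_1,\dotsc,H_r$ of $\ran L$, applies Theorem~\ref{richter} on the $nr$\nobreakdash-dimensional space $\Lin\{f_jH_i\}$, and then deduces $L=L^\nu$ from $\langle L(f),H_i\rangle=\langle L^\nu(f),H_i\rangle$ for $i=1,\dotsc,r$ --- but that final step tacitly assumes $L^\nu(f)\in\ran L$, which the example above also violates; so the difficulty is intrinsic to the statement, not to your approach. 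For completeness: even granting your density hypothesis, your concluding reduction to rank-one weights would still need an argument that the Carath\'eodory step can be performed in a space of dimension $nr$ rather than $nq^2$, since the functionals $F\mapsto u^*F(x)u$ on $\cH_q(E)$ do not factor through $V$.
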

\begin{proof}
Since $\dim\ran L=r\geq1$, we can chose an orthonormal basis $H_1,\dotsc,H_r$ of the linear subspace $\ran(L)$ of the real Hilbert space $(\cH_q,\langle \cdot,\cdot\rangle)$.
Then we can find a basis $f_1,\dotsc,f_n$ of $E$ such that $L(f_j)=H_j$ for  $j=1,\dotsc,r$. 
Clearly,
\[
\cE
:=\Lin\{f_jH_i\colon j=1,\dotsc,n;\,i=1,\dotsc,r\}
\]
is a linear subspace of $\cM(\cX,\fX; \cH_q)$ with $\dim\cE=nr$.
We define a linear functional  $\Lambda\colon\cE\to\dR$  by 
\begin{equation}\label{deflambda}
\Lambda(F):=\sum_{i=1}^r\langle L( \langle F,H_{i}\rangle), H_{i}\rangle, ~~ F\in \cE.
\end{equation}
Because $L$ is a matrix moment functional, there exists $\mu\in\cM_L$.
Thus, $L=L^\mu$.
Let $\tau$ be the trace measure of $\mu$ and $\rnd $ the Radon--Nikodym derivative of $\mu$ with respect to $\tau$.

We show that $\Lambda=\Lambda^\mu$.
It suffices to prove the equality $\Lambda(F)=\Lambda^\mu(F)$ for an element $F:=f_jH_i$ of $\cE$.
Since  $H_1,\dotsc,H_r$ is  an orthonormal basis, we have $\langle F,H_s\rangle=f_j\delta_{si}$  for $s=1,\dotsc,r$.
Moreover $L(f_j)=L^\mu(f_j)=\int_\cX f_j \dif\mu =\int_\cX f_j\rnd  \dif\tau$.
Using these facts we derive
\[\begin{split}
    \Lambda(F)
    &=\sum_{s=1}^r\langle L( \langle F,H_{s}\rangle), H_{s}\rangle
    =\sum_{s=1}^r\langle L(f_j\delta_{si}),H_s\rangle
    =\langle L(f_j),H_i\rangle\\
    &=\langle H_i, L(f_j)\rangle
    =\tr(H_iL(f_j))
    =\tr\left(H_i\left(\int_\cX  f_j\rnd \dif\tau\right)\right)\\ 
    &=\tr \left(\int_\cX f_jH_i \rnd \dif\tau\right)
    =\int_\cX \tr(F\rnd )\dif\tau
    =\int_\cX \langle F, \rnd \rangle\dif\tau
    =\Lambda^\mu(F),
\end{split}\]
which proves that $\Lambda=\Lambda^\mu$. 

Now we  apply Theorem~\ref{richter} to  $\Lambda^\mu$.
Then there exist vectors $v_1,\dotsc,v_k\in\dC^q$ and points $x_1,\dotsc,x_k\in\cX$ such that $k\leq nr$ and  $\nu:=\sum_{j=1}^{k}v_jv_j^*\delta_{x_j}$ belongs to $\cM_\Lambda$.
Let $f\in E$ and $i\in \{1,\dotsc,r\}$.
Using again that $H_1,\dotsc,H_r$ is an orthonormal basis \eqref{deflambda} implies that $ \langle L(f),H_i\rangle=\Lambda(fH_i)$.
Then, by \eqref{finiteat}, 
\[\begin{split}
    \langle &L(f),H_i\rangle
=\Lambda(fH_i)
    =\Lambda^\nu(fH_i)=\sum_{j=1}^k\langle f(x_j)H_i,v_jv_j^*\rangle\\
& =\sum_{j=1}^k\langle f(x_j)v_jv_j^*,H_i\rangle =\left\langle \sum_{j=1}^kf(x_j)v_jv_j^*,H_i\right\rangle
    =\langle L^\nu(f),H_i\rangle.
\end{split}\] 
Hence, $L(f)=L^\nu(f)$.
Therefore, $L=L^\nu$.
\end{proof}

Next we define with Carath\'eodory numbers important quantities of moment functionals and matrix moment functionals.

Recall that $\cM_\Lambda^\mathrm{fa}$ and $\cM_L^\mathrm{fa}$ denote the finitely atomic representing measures of a moment functional $\Lambda$ and a matrix moment functional $L$, respectively. 
Theorem~\ref{richter} and Corollary~\ref{richtercor} ensure that these sets are not empty.

\begin{dfn}\label{defat}
Let $\mu=\sum_{j=1}^k M_j\delta_{x_j}$ be a finitely atomic measure  such that $M_j\in \cH_{q,\succeq}$, $j=1,\dotsc,k$,  and $x_i\neq x_j$ for all $i\neq j$, $i,j=1,\dotsc,k$.
Define
\begin{align*}
    \atnr (\mu)&:= \#\{i\in \{1,\dotsc,k\}\colon M_i\neq O\},\\
    \rank (\mu)&:=\sum_{j=1}^k \rank(M_j).
\end{align*}
\end{dfn}

For arbitrary $\mu\in\cM_q(\cX,\fX)$, let
\[
    \ats(\mu)
    :=\{x\in\cX\colon\mu(\{x\})\neq O\}
\]
be the set of atoms of $\mu$.
Observe that in the situation of Definition~\ref{defat}, we have
\begin{align*}
    \atnr(\mu)&=\#\ats(\mu),&    
    \rank(\mu)&=\sum_{x\in\ats(\mu)}\rank(\mu(\{x\}))&
    &\text{and}&
    \atnr(\mu)&\leq\rank(\mu).
\end{align*}

\begin{dfn}\label{cara}
For a moment functional $\Lambda$, $\Lambda\neq 0$, on $\cE$ we define the \emph{Carath\'eodory numbers} $\Car (\Lambda)$ and $\car(\Lambda)$ by
\begin{align*}
    \Car(\Lambda)&:=\min\,  \{\rank(\mu)\colon \mu\in\cM_\Lambda^\mathrm{fa}\, \},\\
    \car(\Lambda)&:=\min\, \{\atnr(\mu)\colon  \mu\in\cM_\Lambda^\mathrm{fa}\, \}.
\end{align*} 
The maximum of numbers $\Car(\Lambda)$ and $\car(\Lambda)$, respectively, for $\Lambda\in \fL(\cE)$, $\Lambda\neq 0$, are denoted by $\Car(\fL(\cE))$ and $\car(\fL(\cE))$, respectively.
\end{dfn}

\begin{dfn}\label{cara1}
For a matrix moment functional $L$, $L\neq O$, on $E$ we define the \emph{Carath\'eodory numbers} $\Car (L):=\Car(\Lambda_L)$ and $\car(L):=\car(\Lambda_L)$, where $\Lambda_L$ is the moment functional on $\cE:=\cH_q(E)$ given by \eqref{defcl}.
Further, we set $\Car(\cL_q(E)):=\Car(\fL(\cE))$ and $\car(\cL_q(E)):=\car(\fL(\cE))$.
\end{dfn}

Proposition~\ref{equivalence} shows
\begin{align*}
    \Car(L)&=\min\,  \{\rank(\mu)\colon \mu\in\cM_L^\mathrm{fa}\, \},\\
    \car(L)&=\min\, \{\atnr(\mu)\colon  \mu\in\cM_L^\mathrm{fa}\, \}.
\end{align*}
Note that in the case $q=1$ the numbers $\Car(\Lambda)$, $\Car(L)$, $\car(\Lambda)$, $\car(L)$ coincide with the corresponding scalar Carath\'eodory numbers (see e.\,g.\ \cite[Definition~3]{DioS2}). 

In Definition~\ref{defat} we assumed that the points   $x_j$ are pairwise different.
Without this assumption it is easily verified that for a finitely atomic measure $\mu=\sum_{j=1}^k M_j\delta_{x_j}$, with $M_j\in \cH_{q,\succeq}$ for $j=1,\dotsc,k$, we have 
\[
    \#\ats(\mu)\leq\atnr(\mu)~~~  \text{ and } ~~~ \rank(\mu)\leq \sum_{j=1}^k\rank(M_j).
\]
Therefore, it follows from Theorem~\ref{richter} and Corollary~\ref{richtercor} that in Definitions~\ref{cara} and~\ref{cara1} we have 
\begin{align*}%
    \car(\Lambda)\leq \Car(\Lambda) &\leq \dim \cE,\\
    \car(L)\leq   \Car (L)  &\leq (\dim E)\, q^2.
\end{align*}
These inequalities  provide only rough estimates and a finer analysis is desirable.
Here the results obtained in \cite{MR4263684,DioS2,RienerS} on Carath\'eodory numbers in the scalar polynomial case might be useful.

Let us treat two examples.
We begin with a simple lemma.

\begin{lem}\label{aux}
Let $R\in \cH_{2,\succeq}$ and $S\in \cH_2$.
Suppose that $ \ker R=\{0\}$ and $S\neq xR$ for all $x\in \dR$.
Then there exists a $\lambda_0\in \dR$ such that $\lambda_0 \cdot R+S\succeq O$ and $\rank\,(\lambda_0\cdot R+S)= 1$.
\end{lem}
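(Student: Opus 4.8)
The plan is to reduce to the special case $R=I_2$ by a congruence transformation and then to use that a $2\times2$ Hermitian matrix which is not a scalar is unitarily diagonalizable with two \emph{distinct} eigenvalues.

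First I would note that $\ker R=\{0\}$ together with $R\succeq O$ forces $R$ to be positive definite, hence invertible, and its square root $C:=\sqrt{R}$ is positive definite and invertible. For every $\lambda\in\dR$ we have the factorization $\lambda R+S=C(\lambda I_2+\tilde S)C$, where $\tilde S:=C^{-1}SC^{-1}\in\cH_2$. Since $C$ is invertible and Hermitian, congruence by $C$ preserves both positive semidefiniteness and rank; thus $\lambda R+S\succeq O$ if and only if $\lambda I_2+\tilde S\succeq O$, and $\rank(\lambda R+S)=\rank(\lambda I_2+\tilde S)$. Moreover $S=xR$ for some $x\in\dR$ if and only if $\tilde S=xI_2$, so the hypothesis ``$S\neq xR$ for all $x$'' becomes: $\tilde S$ is not a real scalar multiple of $I_2$.

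Next I would diagonalize $\tilde S$. Being a Hermitian $2\times2$ matrix it has real eigenvalues $\mu_1\le\mu_2$, and since $\tilde S$ is not a multiple of $I_2$ these are distinct, $\mu_1<\mu_2$. Put $\lambda_0:=-\mu_1$. Then $\lambda_0 I_2+\tilde S$ is Hermitian with eigenvalues $\lambda_0+\mu_1=0$ and $\lambda_0+\mu_2=\mu_2-\mu_1>0$; hence it is positive semidefinite of rank exactly $1$. By the reduction step, $\lambda_0 R+S\succeq O$ and $\rank(\lambda_0 R+S)=1$, which is the assertion.

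There is essentially no serious obstacle here; the only points needing a line of justification are that congruence by the invertible matrix $C$ leaves positivity and rank unchanged and the equivalence $S=xR\Leftrightarrow\tilde S=xI_2$. As an alternative one can avoid $\sqrt R$ and argue directly with the Hermitian pencil $\lambda R+S$: the map $\lambda\mapsto\det(\lambda R+S)$ is a quadratic polynomial with positive leading coefficient $\det R>0$ and real roots (the eigenvalues of $\sqrt R^{-1}(-S)\sqrt R^{-1}$); taking $\lambda_0$ to be the largest root gives, via a continuity argument on the eigenvalues of $\lambda R+S$ (which are positive for $\lambda$ large and cannot vanish for $\lambda>\lambda_0$), a matrix $\lambda_0R+S\succeq O$, and it is nonzero since $S\neq-\lambda_0 R$, hence has rank $1$.
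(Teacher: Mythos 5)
Your proof is correct, and it takes a genuinely different route from the paper. The paper works directly with the pencil $\lambda R+S$: it considers the set $\cM=\{\lambda\in\dR\colon\lambda R+S\succeq O\}$, shows it is nonempty, closed and bounded below (using $cI\preceq R\preceq CI$ and $R_{11}>0$), takes $\lambda_0=\min\cM$, and argues by contradiction that $\lambda_0R+S$ must be singular — if it were positive definite one could decrease $\lambda_0$ slightly, contradicting minimality; the hypothesis $S\neq xR$ then rules out $\lambda_0R+S=O$, forcing rank $1$. Your argument instead normalizes $R$ to $I_2$ by the congruence $\lambda R+S=C(\lambda I_2+\tilde S)C$ with $C=\sqrt R$, translates every hypothesis and conclusion through this congruence (all the translations you list are right, and the facts that congruence by an invertible matrix preserves semidefiniteness and rank are standard), and then reads the answer off the spectrum of $\tilde S$: since $\tilde S$ is not a scalar its two eigenvalues are distinct, and $\lambda_0=-\mu_1$ works. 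What your approach buys is an explicit formula for $\lambda_0$ (minus the smallest eigenvalue of $R^{-1/2}SR^{-1/2}$) and a completely computational verification with no compactness or contradiction argument; what the paper's approach buys is that it avoids introducing $\sqrt R$ and its inverse and phrases the extremality of $\lambda_0$ in a way that generalizes more readily beyond $2\times2$. Your closing alternative via the largest root of $\det(\lambda R+S)$ is essentially the paper's argument in disguise (that root equals $\min\cM$), though as sketched it is less complete than your main proof.
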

\begin{proof}
Consider the set $\cM:=\{\lambda \in \dR\colon \lambda \cdot R+S\succeq O \}$.
The assumptions on $R$ imply that there are numbers $C>c>0$ such that $C\cdot I\succeq R\succeq c\cdot I $. 
Hence $\cM$ is not empty and bounded from below (because $R_{11}>0$).
Obviously, $\cM$ is closed.
Let $\lambda_0=\min \cM$.
Then $\lambda_0\cdot R+S\succeq O$. 
If $\ker\, (\lambda_0 R+S)$ would be trivial, then $\lambda_0 R+S\succeq \varepsilon\cdot I$ for some $\varepsilon >0$, so $\lambda_0\cdot R+S\succeq \varepsilon C^{-1}\cdot R$ and hence $(\lambda_0- \varepsilon C^{-1})\cdot R+S\succeq O$, which contradicts $\lambda_0 =\min \cM$.
Thus, $\ker (\lambda_0 R+S)\neq\{0\}$.
Since $\lambda_0\cdot R+S\neq O$ by assumption, we have $\rank (\lambda_0\cdot R+S)=1$.
\end{proof}

\begin{exm}\label{E1311}
Suppose  $\cX=\dR$ and $E=\Lin \{1,x\}$.
Let $L\in\scrL(E,\cH_q)$ and set $R:=L(1)$ and $S:=L(x)$. 
Suppose that $R\neq O$ and $R\succeq O $.

Case 1:
$\ker R\neq \{0\}$.

Without loss of generality $u:=(1,0)^\trn \in \ker R$.

Case 1.1:
$u\notin \ker S$.

Then $L$ is not a moment functional.
Assume to the contrary that $L\in \cL_q(E)$.
By Corollary~\ref{richtercor}, $L$ has a representing measure $\mu=\sum_{j=1}^k \delta_{x_j}M_j$.
Then  $Ru=L(1)u= \sum_{j=1}^k M_ju=0$.
Since $M_j\succeq O $, it follows that $M_ju=0$ for all $j$, so $Su=L(x)u=\sum_{j=1}^k x_jM_ju=0$, which contradicts the assumption.

Case 1.2:
$u\in \ker S$.

Since $u\in \ker R, u\in \ker S$ and $R,S\in \cH_2$, the matrix entries $R_{11}$, $R_{12}$, $R_{21}$ of $R$ and $S_{11}$, $S_{12}$, $S_{21}$ of $S$ are zero. Note that $R_{22}>0$, because $R\succeq O$ and $R\neq O$ by assumption.  Then  $L$ is a moment functional with representing measure
\begin{align*}
\mu:=\delta_{S_{22}R_{22}^{-1}}
\begin{pmatrix}
O&O \\
O& R_{22}
\end{pmatrix}
\end{align*} and we obviously have $\Car(L)=\car (L)=1$.

Case 2:
$\ker R=\{0\}$.

Case 2.1: $S=x\cdot R$ for some $x\in \dR$.

Then $L(a+bx)= aR+bS = (a+bx)R $ for $a,b\in \dR$. Hence $\mu:=\delta_xR$ is a representing measure for $L$, so $L$ is a matrix moment functional. Thus, $\car(L)=1$ and $\Car(L)\leq \rank (\mu)=\rank R=2$. Since $\mu$ is the only  $1$-atomic representing measure for $L$ as easily checked, we have $\Car(L)=2$.

Case 2.2: $S\neq x\cdot S$ for all $x\in \dR$.

By the  Cases 2 and 2.2, $R$ and $S$, $-S$ satisfy the assumptions of Lemma~\ref{aux}. Let $\lambda_+$ and $\lambda_-$ denote the corresponding numbers $\lambda_0$ from Lemma~\ref{aux} for $S$ and $-S$, respectively. Set $N_+:=\lambda_-\cdot R+S$ and $N_-:=\lambda_+\cdot R-S$. From Lemma~\ref{aux} we obtain $\rank N_+=\rank N_-=1$ and $N_+\succeq O$, $N_-\succeq O$. Then $(\lambda_+ +\lambda_-)\cdot R=N_++N_-$ and hence $\lambda_++\lambda_->0$.
Set
\begin{align*}
M_+:=(\lambda_+ +\lambda_-)^{-1}\cdot N_+, ~~~M_-:=(\lambda_+ +\lambda_-)^{-1}N_-.
\end{align*}

From the equations $M_++M_-=(\lambda_+ +\lambda_-)^{-1} (N_++N_-)=R=L(1)$ and
\begin{align*}
\lambda_+\cdot M_+& +(- \lambda_-)\cdot M_- =(\lambda_+ +\lambda_-)^{-1} (\lambda_+ \cdot N_+-\lambda_-\cdot N_-)\\ &=(\lambda_+ +\lambda_-)^{-1} (\lambda_+ \cdot(\lambda_-\cdot R+S)-\lambda_-\cdot(\lambda_+\cdot R-S))=S=L(x)
\end{align*}
we conclude that $\mu:=\delta_{\lambda_+}M_+ +\delta_{-\lambda_-}M_-$ is a representing measure for $L$.
Thus, in particular, $L$ is a matrix moment functional. Since $\lambda_++ \lambda_->0$, we have $\lambda_+\neq -\lambda_-$, so $\car(L)\leq \atnr(\mu)=2$ and $\Car (L)\leq \rank(\mu)=\rank M_++ \rank M_-=2$. From the requirement $S\neq x\cdot R$ for all $x\in \dR$ in Case 2.2 it follows that $L$ has no $1$-atomic representing measure. Therefore, we have $\car(L)=2$ and $\Car(L)=2$.
\end{exm}

\begin{exm}
Let $\cX$ be the set of  points $x_1=(-1,-1)$, $x_2=(-1,1)$, $x_3=(1,-1)$, $x_4=(1,1)$ in $\dR^2$ and $E=\Lin\{x,y\}$. 
Given $R,S\in \cH_2$, we define a linear mapping $L:E\mapsto \cH_2$ by $L(x)=R, L(y)=S$.

Any measure $\nu\in \cM_2(\cX,\fX)$ is of the form $\nu=\sum_{j=1}^4 \delta_{x_j}M_j$ with $ M_j\in \cH_{2,\succeq}$.
Clearly, $\nu$ is a representing measure for $L$ if and only if
\begin{align*}
    R&=-M_1-M_2+M_3+M_4,&
    S&=-M_1+M_2-M_3+M_4,
\end{align*}
or equivalently, 
\begin{align}\label{mjs}
    R+S&=2(M_4-M_1),&
    R-S&=2(M_3-M_2).
\end{align}
Since these equations can be always satisfied,  $L$ is a matrix moment functional.
How many atoms are needed depends on the fact  whether or not $R+S$ or $R-S$ are in $\cH_{2,\succeq}$. 
We mention only one possible case.

Suppose that $\rank(R+S)=\rank (R-S)=2$ and non of the four matrices $\pm(R+S)$, $\pm( R-S)$ is positive. 
We write  $R+S$ and $R-S$ as in \eqref{mjs} with $\rank M_j=1$ for $j=1,\dotsc,4$. Then $\rank (\nu)=4$ and $\atnr(\nu)=4$. The assumptions imply that all four  matrices $M_j$ in \eqref{mjs} are needed. Hence $\Car(L)=4$ and $\car(L)=4$.
\end{exm}

\section{Strictly positive functionals}\label{strictlypositive}

\begin{dfn}
A linear functional $\Lambda$ on $\cE$  is said to be \emph{strictly $\cE_{\succeq}$-positive}
if~  $\Lambda(F)>0$\, for all $F\in \cE_{\succeq}$, $F\neq O$. 
\end{dfn}

Let us recall the following  notation:
If $C$ is a convex cone in a real vector space $V$, then  its \emph{dual cone}  $C^\wedge$ in the dual vector space $V^*$ is defined by
\[
    C^\wedge \, 
    :=\{\ell\in V^*\colon\ell(v)\geq 0\text{ for }v\in C\}.
\]

\begin{lem}\label{128}
Let $ \overline{\fL(\cE)}$ denote the closure of $\fL(\cE)$ in some norm of the (finite-dimensional) vector space $\cE^*$.
Then 
\begin{equation}\label{lqesim}
    \fL (\cE)
    \subseteq (\cE_{\succeq})^\wedge \, 
    =\overline{\fL(\cE)}
\end{equation}
and
\begin{equation}\label{128.B}
(\fL(\cE))^\wedge
=\cE_\succeq
=((\cE_\succeq)^\wedge)^\wedge.
\end{equation}
\end{lem}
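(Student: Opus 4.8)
The plan is to take the identity $(\fL(\cE))^\wedge=\cE_\succeq$ as the hub and derive both displayed lines from it by routine dual-cone manipulations in the finite-dimensional space $\cE^*$, with $\cE$ identified with $\cE^{**}$. First I would record that $\fL(\cE)$ is a convex cone in $\cE^*$: if $\Lambda=\Lambda^\mu$ and $\Lambda'=\Lambda^{\mu'}$ with $\cE\subseteq L^1(\mu;\cH_q)\cap L^1(\mu';\cH_q)$, then $\mu+\mu'\in\cM_q(\cX,\fX)$, $\cE\subseteq L^1(\mu+\mu';\cH_q)$ and $\Lambda+\Lambda'=\Lambda^{\mu+\mu'}$, while $c\Lambda^\mu=\Lambda^{c\mu}$ for $c\ge 0$. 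The inclusion $\fL(\cE)\subseteq(\cE_\succeq)^\wedge$, which is the first part of \eqref{lqesim}, is exactly Lemma~\ref{L1036+}, and the same lemma also gives $\cE_\succeq\subseteq(\fL(\cE))^\wedge$.

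The one step carrying genuine content is the reverse inclusion $(\fL(\cE))^\wedge\subseteq\cE_\succeq$. Let $F\in\cE$ satisfy $\Lambda(F)\ge 0$ for all $\Lambda\in\fL(\cE)$, and fix $x\in\cX$ and $v\in\dC^q$. The finitely atomic measure $\mu:=vv^*\delta_x$ belongs to $\cM_q(\cX,\fX)$, every element of $\cE$ is $\mu$-integrable, so $\Lambda^\mu\in\fL(\cE)$, and by \eqref{inttrace} (see also \eqref{lqf}) one has $\Lambda^\mu(G)=\tr(G(x)vv^*)=v^*G(x)v$ for $G\in\cE$. Hence $v^*F(x)v=\Lambda^\mu(F)\ge 0$; since $x$ and $v$ were arbitrary, $F(x)\succeq O$ for all $x\in\cX$, i.e.\ $F\in\cE_\succeq$. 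This establishes $(\fL(\cE))^\wedge=\cE_\succeq$.

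It remains to harvest the two displays. The bipolar theorem for a convex cone $C$ in a finite-dimensional real vector space gives $C^{\wedge\wedge}=\overline{C}$; applied to $C=\fL(\cE)$ together with the identity just proved,
\[
\overline{\fL(\cE)}=(\fL(\cE))^{\wedge\wedge}=\bigl((\fL(\cE))^\wedge\bigr)^\wedge=(\cE_\succeq)^\wedge,
\]
which is the asserted equality in \eqref{lqesim}. For \eqref{128.B}, since a linear functional nonnegative on a set is nonnegative on its closure, $(\overline{\fL(\cE)})^\wedge=(\fL(\cE))^\wedge$, and therefore
\[
\bigl((\cE_\succeq)^\wedge\bigr)^\wedge=\bigl(\overline{\fL(\cE)}\bigr)^\wedge=(\fL(\cE))^\wedge=\cE_\succeq .
\]
I expect no real obstacle here: the only substantive input is that the point functionals $G\mapsto v^*G(x)v$ are moment functionals, realized by single-atom rank-one measures, which squeezes $(\fL(\cE))^\wedge$ down to $\cE_\succeq$; everything else is standard convexity bookkeeping, the one thing worth checking being that $\fL(\cE)$ really is a convex cone so the bipolar theorem applies verbatim.
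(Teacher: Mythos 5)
Your proof is correct and rests on the same two ingredients as the paper's: the point functionals $G\mapsto v^*G(x)v$, realized as moment functionals by the rank-one atomic measures $vv^*\delta_x$, together with the bipolar theorem in the finite-dimensional space $\cE^*$ (plus the observation that $\fL(\cE)$ is a convex cone). The only difference is organizational: you establish $(\fL(\cE))^\wedge=\cE_\succeq$ first and apply the bipolar theorem to the cone $\fL(\cE)$ to get $\overline{\fL(\cE)}=(\cE_\succeq)^\wedge$, whereas the paper proves $(\cE_\succeq)^\wedge\subseteq\overline{\fL(\cE)}$ by an explicit separation argument (which is just the bipolar theorem unpacked, using the very same point functionals) and then invokes the cited bipolar theorem for the closed cone $\cE_\succeq$; both routes are equally valid.
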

\begin{proof}
The main part of proof is to show that\, $(\cE_{\succeq})^\wedge\subseteq \clo{\fL(\cE)}$.
Assume to the contrary that there exists an $\Lambda_0\in(\cE_{\succeq})^\wedge$ such that $\Lambda_0\notin\clo{\fL(\cE)}$.
Then, by the separation of convex sets, applied to the closed (!) convex set $\clo{\fL(\cE)}$ in $\cE^*$, there exists a linear functional $\varphi$ on 
$\cE^*$ 
such that $\varphi(\Lambda_0)<0$ and $\varphi(\Lambda)\geq 0$ for all $\Lambda\in\clo{\fL(\cE)}$.
Since $\cE$ is finite-dimensional,  this functional is given by some element $F_0$ of $\cE$, that is, $\varphi(\Lambda)=\Lambda(F_0)$ for all $\Lambda\in \cE^*$.

Let $x\in \cX$ and $v\in \dC^q$.
Define $\Lambda\in \cE^*$ by $\Lambda(F)=v^*F(x)v$, $F\in \cE$.
Clearly, $\Lambda$ is a moment functional on $\cE$ with representing measure $\delta_x vv^*$.
Hence $\varphi(\Lambda)\geq 0$, so that  $\varphi(\Lambda)=\Lambda(F_0)=v^*F_0(x)v\geq 0$ for all $v\in \dC^q$.
Thus $F_0(x)\succeq O$ and hence $F_0\in \cE_{\succeq}$. 
Since $\Lambda_0\in(\cE_{\succeq})^\wedge$, we therefore have $ \varphi(\Lambda_0)=\Lambda_0(F_0)\geq 0$, a contradiction.
This proves that $(\cE_{\succeq})^\wedge\subseteq \clo{\fL(\cE)}$.

From Lemma~\ref{L1036+} we know that $\fL(\cE)\subseteq (\cE_{\succeq})^\wedge$.
Hence, since $(\cE_{\succeq})^\wedge$ is obviously closed in $\cE^*$, it follows that
$\ov{\fL_q(\cE)}\subseteq (\cE_{\succeq})^\wedge$.
Putting all these relations together \eqref{lqesim} follows.

Obviously, $(\overline{\fL(\cE)})^\wedge=(\fL(\cE))^\wedge$.
From \eqref{lqesim} we get $((\cE_\succeq)^\wedge)^\wedge=(\overline{\fL(\cE)})^\wedge$.
Furthermore, since $\cE_{\succeq}$ is closed in $\cE$, the bipolar theorem (see e.\,g.\ \cite[Theorem~14.1]{MR0274683}) yields $((\cE_\succeq)^\wedge )^\wedge=\cE_\succeq$.
Putting all these relations together \eqref{128.B} follows.
\end{proof}

\begin{lem}\label{L129}
Let $\lVert\cdot\rVert_0$ and $\lVert\cdot\rVert_1$ be norms on the vector spaces $\cE$ and $\cE^*$, respectively.
For any $\Lambda\in \cE^*$ the following are equivalent:
\begin{enumerate}[(i)]
    \item\label{L129.i} $\Lambda$ is strictly $\cE_{\succeq}$-positive.
    \item\label{L129.ii} There exists a number $c>0$ such that
    \begin{equation}\label{1.12}
        \Lambda(F)\geq c\lVert F\rVert_0\quad\text{for all }F\in\cE_{\succeq}.
    \end{equation}
    \item\label{L129.iii} $\Lambda$ is an inner point of the cone $(\cE_{\succeq})^\wedge$ in the normed space $(\cE^*,\lVert\cdot\rVert_1)$.
\end{enumerate}
\end{lem}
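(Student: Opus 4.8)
The plan is to prove the cyclic chain of implications (i)$\Rightarrow$(ii)$\Rightarrow$(iii)$\Rightarrow$(i). Before doing so I would dispose of the degenerate case $\cE_{\succeq}=\{O\}$: then (i) holds vacuously, (ii) holds with any $c>0$, and $(\cE_{\succeq})^\wedge=\cE^*$, so (iii) holds for every $\Lambda\in\cE^*$. Hence one may and does assume from now on that $\cE_{\succeq}$ contains a nonzero element.

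For (i)$\Rightarrow$(ii) I would work with the ``unit sphere of the cone'', $S:=\{F\in\cE_{\succeq}\colon\lVert F\rVert_0=1\}$. It is nonempty by the reduction just made, and compact because $\cE$ is finite-dimensional and $\cE_{\succeq}$ is closed in $\cE$. By (i) the continuous functional $F\mapsto\Lambda(F)$ is strictly positive on $S$, so the minimum $c:=\min_{F\in S}\Lambda(F)$ exists and satisfies $c>0$. Since $\cE_{\succeq}$ is a cone, every nonzero $F\in\cE_{\succeq}$ has $F/\lVert F\rVert_0\in S$, whence $\Lambda(F)=\lVert F\rVert_0\,\Lambda(F/\lVert F\rVert_0)\geq c\lVert F\rVert_0$; for $F=O$ the estimate \eqref{1.12} is trivial, so (ii) follows.

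For (ii)$\Rightarrow$(iii) I would use that on the finite-dimensional spaces $\cE$ and $\cE^*$ all norms are equivalent, fixing a constant $C>0$ with $\lvert\Psi(F)\rvert\leq C\lVert\Psi\rVert_1\lVert F\rVert_0$ for all $\Psi\in\cE^*$ and $F\in\cE$. Then for any $\Psi\in\cE^*$ with $\lVert\Psi\rVert_1\leq c/C$ and any $F\in\cE_{\succeq}$, estimate \eqref{1.12} gives $(\Lambda+\Psi)(F)\geq c\lVert F\rVert_0-C\lVert\Psi\rVert_1\lVert F\rVert_0=(c-C\lVert\Psi\rVert_1)\lVert F\rVert_0\geq0$, so $\Lambda+\Psi\in(\cE_{\succeq})^\wedge$; thus the $\lVert\cdot\rVert_1$-ball of radius $c/C$ around $\Lambda$ lies in $(\cE_{\succeq})^\wedge$, i.e.\ $\Lambda$ is an inner point. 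For (iii)$\Rightarrow$(i) I would argue by contradiction: being an inner point, $\Lambda\in(\cE_{\succeq})^\wedge$, so $\Lambda(F)\geq0$ on $\cE_{\succeq}$; if $\Lambda(F_0)=0$ for some nonzero $F_0\in\cE_{\succeq}$, choose $\Psi_0\in\cE^*$ with $\Psi_0(F_0)>0$ (possible since $F_0\neq O$) and take $t>0$ small enough that $\Lambda-t\Psi_0\in(\cE_{\succeq})^\wedge$, which forces $0\leq(\Lambda-t\Psi_0)(F_0)=-t\Psi_0(F_0)<0$, a contradiction; hence $\Lambda(F_0)>0$ for all $F_0\in\cE_{\succeq}\setminus\{O\}$.

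The only step that requires genuine care is (i)$\Rightarrow$(ii): one must exclude the degenerate possibility $S=\varnothing$ and invoke compactness of $S$, which rests on the finite-dimensionality of $\cE$. The other two implications are routine once norm equivalence (again finite-dimensionality) and the one-parameter perturbation trick are available.
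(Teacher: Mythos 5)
Your proof is correct and follows essentially the same route as the paper: compactness of the unit sphere of the closed cone $\cE_{\succeq}$ for (i)$\Rightarrow$(ii), a norm-ball argument for (ii)$\Rightarrow$(iii), and a small perturbation of $\Lambda$ for (iii)$\Rightarrow$(i). The only (harmless) variation is in (iii)$\Rightarrow$(i), where you pick an abstract $\Psi_0\in\cE^*$ with $\Psi_0(F_0)>0$ and argue by contradiction, whereas the paper works directly with the concrete point-evaluation functional $G\mapsto v^*G(x)v$; both rest on the same inner-point perturbation idea.
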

\begin{proof}
\ref{L129.i}$\to$\ref{L129.ii}:
If $\cE_{\succeq}=\{0\}$, then \eqref{1.12} is fulfilled with $c:=1$.
Now suppose $\cE_{\succeq}\neq\{0\}$. 
Then $\cU_+:=\{ F\in \cE_{\succeq}\colon \lVert F\rVert_0=1\}$ is non-empty.
Define $c:= \inf\, \{ \Lambda(F)\colon F\in \cU_+\}$.
Obviously,  $\cE_{\succeq}$ is closed in $(\cE,\lVert\cdot\rVert_0)$.
Thus $\cU_+$ is a bounded and closed subset of the  finite-dimensional normed vector space $(\cE,\lVert\cdot\rVert_0)$.
Hence $\cU_+$ is compact.
The linear functional $\Lambda$ is continuous on 
the 
finite-dimensional space $(\cE,\lVert\cdot\rVert_0)$.
Therefore, the infimum is attained at some $F_0\in\cU_+$.
Since $F_0\in \cU_+$, we have $F_0\in \cE_{\succeq}$ and $F_0\neq O$.
Hence, by~\ref{L129.i}, $c=\Lambda(F_0)>0$.
Then, by the definition of $c$,  $\Lambda(F)\geq c$ for all $F\in\cU_+$.
By scaling $F$ this yields \eqref{1.12}. %

\ref{L129.ii}$\to$\ref{L129.iii}:
From \eqref{1.12} we obtain $\Lambda(F)\geq c\lVert F\rVert_0\geq 0$ for all $F\in\cE_{\succeq}$.
Thus, $\Lambda\in(\cE_{\succeq})^\wedge$. 
We equip the space $\cE^*$ with the  norm $\lVert\cdot\rVert$ defined by 
\[
    \lVert\Lambda'\rVert
    :=\sup~ \{\, \lvert\Lambda'(F)\rvert\colon F\in\cE,\; \lVert F\rVert_0=1\},
    \quad \Lambda'\in \cE^*.
\]

Now we take  $\Lambda'\in \cE^*$ such that $\lVert \Lambda-\Lambda'\rVert_0<c$, where $c$ is the positive constant occurring in \eqref{1.12}.
Let $F\in \cE_{\succeq}$.
Using \eqref{1.12}  we derive
\[\begin{split}
    c\lVert F\rVert_0-\Lambda'(F)
    \leq \Lambda(F)-\Lambda'(F)
    =(\Lambda-\Lambda')(F)
    \leq \lVert \Lambda-\Lambda'\rVert \, \lVert F\rVert_0
    \leq c\lVert F\rVert_0,
\end{split}\]
so that $\Lambda'(F)\geq 0$.
Hence, $\Lambda'\in(\cE_{\succeq})^\wedge$.
This shows that $\Lambda$ is an inner point of $(\cE_{\succeq})^\wedge$ in the normed  space $(\cE^*,\lVert\cdot\rVert)$.
Since all norms on a finite-dimensional vector space are equivalent,  $\Lambda$ is also an inner point of $(\cE_{\succeq})^\wedge$ in  $(\cE^*,\lVert\cdot\rVert_1)$. 

\ref{L129.iii}$\to$\ref{L129.i}:
Suppose that $F\in \cE_{\succeq}$ and $F\neq O$.
Then there exists an $x\in \cX$ such that $F(x)\neq O$.
Because of $F\in \cE_{\succeq}$, there exists a vector $v\in \dC^q$ such that $v^*F(x)v>0$.
Define $\Lambda'\in \cE^*$ by $\Lambda'(G)=v^*G(x)v$, $G\in \cE$.
By~\ref{L129.iii}, $\Lambda$ is an inner point of 
$(\cE_{\succeq})^\wedge$ in $(\cE^*,\lVert\cdot\rVert_1)$.
Hence there exists an $\epsilon >0$ such that $(\Lambda-\epsilon \Lambda')\in (\cE_{\succeq})^\wedge$.
Since $\Lambda'(F)=v^*F(x)v$, we obtain
\[
    \Lambda(F)-\epsilon \, v^*F(x)v
    = \Lambda(F)-\epsilon\, \Lambda'(F)
    = (\Lambda-\epsilon \Lambda')(F) 
    \geq 0.
\]
Consequently, $\Lambda(F)\geq \epsilon\, v^*F(x)v>0$.
Thus, $\Lambda$ is strictly $\cE_\succeq$-positive.
\end{proof}

The following theorem is the main result of this section.

\begin{thm}\label{T130}
Suppose that $\Lambda $ is a strictly $\cE_\succeq$-positive linear functional on $\cE$.
Then  $\Lambda$ is a moment functional.

Further, for any point $x\in\cX $ and matrix $M\in \cH_{q,\succeq}$,  there exists a finitely atomic measure $\nu\in\cM_{\Lambda}$ and a number $\epsilon>0$ such that $\nu(\{x\})\succeq\epsilon M$.
\end{thm}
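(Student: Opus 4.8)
The plan is to deduce both assertions from Lemmas~\ref{128} and~\ref{L129} together with the matricial Richter--Tchakaloff theorem (Theorem~\ref{richter}). First, that $\Lambda$ is a moment functional: by Lemma~\ref{L129}, a functional on $\cE$ is strictly $\cE_\succeq$-positive precisely when it is an inner point of the cone $(\cE_\succeq)^\wedge$ in $\cE^*$, and by Lemma~\ref{128} this cone equals $\clo{\fL(\cE)}$. Now $\fL(\cE)$ is a convex cone (the sum of two representing measures represents the sum of the corresponding functionals, and nonnegative scalings are clear), and a convex subset of a finite-dimensional vector space has the same interior as its closure whenever that interior is nonempty; hence the inner point $\Lambda$ of $\clo{\fL(\cE)}$ already lies in $\fL(\cE)$, i.e.\ $\Lambda$ is a moment functional.

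For the refined statement, fix $x\in\cX$ and $M\in\cH_{q,\succeq}$ and define $\ell\in\cE^*$ by $\ell(F):=\tr(F(x)M)$. Then $\ell$ is the moment functional of the finitely atomic measure $M\delta_x$, so $\ell\in\fL(\cE)\subseteq(\cE_\succeq)^\wedge$. Since $\Lambda$ is an inner point of $(\cE_\succeq)^\wedge$ and $\Lambda-\epsilon\ell\to\Lambda$ as $\epsilon\to0^+$, the functional $\Lambda-\epsilon\ell$ is again an inner point of $(\cE_\succeq)^\wedge$ for all sufficiently small $\epsilon>0$, hence strictly $\cE_\succeq$-positive by Lemma~\ref{L129}, hence a moment functional by the first part. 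Applying Theorem~\ref{richter} (or, when $\Lambda-\epsilon\ell=O$, simply taking the zero measure) we obtain a finitely atomic $\nu'\in\cM_{\Lambda-\epsilon\ell}$, and then $\nu:=\epsilon M\delta_x+\nu'$ is a finitely atomic measure in $\cM_q(\cX,\fX)$ with $\int_\cX\langle F,\dif\nu\rangle=\epsilon\ell(F)+(\Lambda-\epsilon\ell)(F)=\Lambda(F)$ for all $F\in\cE$; thus $\nu\in\cM_\Lambda$, and $\nu(\{x\})=\epsilon M+\nu'(\{x\})\succeq\epsilon M$ because $\nu'(\{x\})\succeq O$.

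There is no serious obstacle here: the argument is a short deduction from the previously established Lemmas~\ref{128} and~\ref{L129} and from Theorem~\ref{richter}. The only two steps that need a little care are the convexity fact that an inner point of the closure of a convex set belongs to the set (used to return from $\clo{\fL(\cE)}$ to $\fL(\cE)$), and the verification that subtracting a small positive multiple of the point-mass functional $\ell$ keeps $\Lambda$ inside the interior of $(\cE_\succeq)^\wedge$.
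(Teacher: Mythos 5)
Your proposal is correct and follows essentially the same route as the paper: Lemma~\ref{L129} identifies strict $\cE_\succeq$-positivity with being an inner point of $(\cE_\succeq)^\wedge$, Lemma~\ref{128} identifies that cone with $\clo{\fL(\cE)}$, and the interior-of-closure-equals-interior fact for convex sets yields $\Lambda\in\fL(\cE)$; the second assertion is then obtained by subtracting a small multiple of the point-evaluation functional and invoking Theorem~\ref{richter}. The only (cosmetic) difference is that the paper verifies strict positivity of $\Lambda-\epsilon\Lambda'$ by the explicit norm estimate of Lemma~\ref{L129}(ii) rather than by the openness of the interior, and your explicit handling of the degenerate case $\Lambda-\epsilon\ell=O$ is a small bonus.
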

\begin{proof}
Let $\lVert\cdot\rVert_1$ be a norm on $\cE^*$.
Since $\Lambda$ is strictly $\cE_\succeq$-positive, by Lemma~\ref{L129} it is an inner point of the cone $(\cE_{\succeq})^\wedge$ in $(\cE^*,\lVert\cdot\rVert_1)$.
Further, by Lemma~\ref{128},  $(\cE_{\succeq})^\wedge$ is the closure  $\overline{\fL(\cE)}$ of the convex cone $\fL(\cE)$ in the normed space $(\cE^*,\lVert\cdot\rVert_1)$.
The convex set $\fL(\cE)$ and its closure  $\overline{\fL(\cE)}$ have the same inner points (see e.\,g.\ \cite[Theorem~6.3]{MR0274683}).
Hence $\Lambda$ is an inner point of $\fL(\cE)$.
In particular, $\Lambda \in \fL(\cE)$ which proves the first assertion. 

We choose a norm $\lVert\cdot\rVert_0$  on $\cE$ and define $\Lambda'\in \cE^*$ by $\Lambda'(F):=\langle F(x),M\rangle$.
The linear functional $\Lambda'$ is continuous on the finite-dimensional normed space $(\cE,\lVert\cdot\rVert_0)$.
Hence there is a number $C>0$ such that $\lvert\Lambda'(F)\rvert \leq C\lVert F\rVert_0$ for $F\in \cE$.
By Lemma~\ref{L129},  there exists a $c>0$ such that $\Lambda(F)\geq c\lVert F\rVert_0$ for all $F\in \cE_{\succeq}$.
Let $\epsilon>0$ be such that $\epsilon C<c$. 
Then, for $F\in \cE_{\succeq}$, $F\neq O$, we obtain
\[
    (\Lambda-\epsilon \Lambda')(F)
    \geq c \lVert F\rVert_0- \epsilon C\lVert F\rVert_0
    =(c-\epsilon C)\lVert F\rVert_0
    >0.
\]
Thus, $(\Lambda-\epsilon \Lambda')\in \cE^*$ is strictly $\cE_{\succeq}$-positive.
Therefore, $(\Lambda-\epsilon \Lambda')\in \fL(\cE)$ by the first assertion.
By Theorem~\ref{richter}, $\Lambda-\epsilon \Lambda'$ has a finitely atomic representing measure $\nu_0$.
Then $\nu:=\nu_0+\epsilon M\delta_x$ is a finitely atomic representing measure of $\Lambda$ and $\nu(\{x\})=\nu_0(\{x\})+\epsilon M\succeq \epsilon M$, which is the second assertion. 
\end{proof}

Recall that, for a subset $\cU$ of $\cX$, we set $\cU\cap\fX:=\{\cU\cap X\colon X\in\fX\}$.
Furthermore, for a measurable space $(\cU,\fU)$, we denote by $\cM(\cU,\fU;\cH_q)$ the real vector space of measurable mappings $G\colon\cU\to\cH_q$.

\begin{cor}\label{C131P}
Suppose $\Lambda$ is a moment functional on $\cE$ and $\mu\in\cM_\Lambda$.
Let  $\cU\in\fX\setminus\{\emptyset\}$ and set $\fU:=\cU\cap\fX$.
For all $x\in\cU$, let $\bU_x$ be a linear subspace of\, $\dC^q$ and denote the orthogonal projection on $\bU_x$ by $P(x)$.
Suppose  $\mu(\cX\setminus\mathcal{U})=O$,  $\ran (\mu(\{x\}))\subseteq\bU_x$ for all $x\in\cU$, $P(x)$ belongs to $\cM(\cU,\fU;\cH_q)$, and  the following condition holds:    
\begin{enumerate}[$(\ast)$]
    \item\label{C131P.I} If $F\in\cE$ satisfies $P(x)F(x)P(x)\succeq O$ for all $x\in\cU$ and $ \Lambda(F)=0$, then $P(x)F(x)P(x)=O$ for all $x\in\cU$.
\end{enumerate}
Then, for each $\xi\in\cU$ and  $M\in\cH_{q,\succeq} $ with $\ran (M)\subseteq\bU_\xi$, there exist a $\nu\in\cM^\mathrm{fa}_\Lambda$ and an $\epsilon\in(0,\infty)$ such that $\nu(\{\xi\})\succeq\epsilon M$.
\end{cor}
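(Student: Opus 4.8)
The strategy is to reduce the statement to Theorem~\ref{T130} by passing from $\cX$ to the restricted measurable space $(\cU,\fU)$ and from $\cE$ to a suitably ``compressed'' space of mappings into $\cH_q$. First I would form the compressed space
\[
    \cG
    :=\{G_F\colon F\in\cE\},
    \qquad
    G_F(x):=P(x)F(x)P(x)\quad\text{for }x\in\cU.
\]
Since $P$ is measurable (by assumption $P\in\cM(\cU,\fU;\cH_q)$) and $\cE$ is finite-dimensional, $\cG$ is a finite-dimensional real vector space of measurable mappings $G\colon\cU\to\cH_q$. Next I would define a linear functional $\Gamma$ on $\cG$. The natural candidate is $\Gamma(G_F):=\Lambda(F)$, but one must check this is well defined, i.e.\ that $G_F=O$ on $\cU$ forces $\Lambda(F)=0$. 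This follows from $(\ast)$: if $P(x)F(x)P(x)=O$ for all $x\in\cU$, then in particular $P(x)F(x)P(x)\succeq O$, and $(\ast)$ only guarantees the converse implication, so this needs a separate argument. The clean way is to use the representing measure $\mu$: because $\ran(\mu(\{x\}))\subseteq\bU_x$, i.e.\ $P(x)\mu(\{x\})=\mu(\{x\})P(x)=\mu(\{x\})$ (and analogously $P\rnd P=\rnd$ $\tau$-a.e., using $\ran\rnd(x)\subseteq\bU_x$ off a null set, which I would justify from $\mu(\cX\setminus\cU)=O$ together with $\ran\mu(\{x\})\subseteq\bU_x$ — here a small measure-theoretic lemma is needed), one gets
\[
    \Lambda(F)
    =\int_\cU\langle F,\rnd\rangle\,\dif\tau
    =\int_\cU\langle PFP,\rnd\rangle\,\dif\tau
    =\int_\cU\langle G_F,\rnd\rangle\,\dif\tau,
\]
so $\Lambda(F)$ depends only on $G_F$; in particular $G_F=O$ gives $\Lambda(F)=0$. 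The same computation shows $\Gamma=\Lambda^{\mu\lceil\fU}$ as a functional on $\cG$, hence $\Gamma$ is a moment functional on $\cG$.

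Now I would verify that $\Gamma$ is \emph{strictly $\cG_\succeq$-positive} on $\cG$. Take $G_F\in\cG$ with $G_F(x)\succeq O$ for all $x\in\cU$ and $G_F\neq O$. Then $F$ satisfies $P(x)F(x)P(x)\succeq O$ for all $x\in\cU$. If $\Gamma(G_F)=\Lambda(F)=0$, then $(\ast)$ forces $P(x)F(x)P(x)=O$ for all $x\in\cU$, i.e.\ $G_F=O$, contradicting $G_F\neq O$. Hence $\Gamma(G_F)>0$, which is precisely strict $\cG_\succeq$-positivity. (One subtlety: $\Gamma(G_F)\geq 0$ whenever $G_F\succeq O$ holds because $\Gamma$ is a moment functional, by Lemma~\ref{L1036+} applied on $\cU$; combined with the above, $\Gamma(G_F)=0$ is impossible for $G_F\neq O$.)

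With strict positivity in hand, apply the second assertion of Theorem~\ref{T130} to $\Gamma$ on $(\cU,\fU)$: given $\xi\in\cU$ and $M\in\cH_{q,\succeq}$ with $\ran M\subseteq\bU_\xi$, there is a finitely atomic $\nu'\in\cM_\Gamma$ (measure on $(\cU,\fU)$) and an $\epsilon>0$ with $\nu'(\{\xi\})\succeq\epsilon M$. It remains to lift $\nu'$ to a representing measure of $\Lambda$ on $(\cX,\fX)$. Extend $\nu'$ to $\cX$ by setting it to $O$ on $\cX\setminus\cU$ (possible since $\fU=\cU\cap\fX$ and $\cU\in\fX$), and call the extension $\nu$; then $\nu\in\cM_q^{\mathrm{fa}}(\cX,\fX)$. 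For $F\in\cE$,
\[
    \Lambda(F)
    =\Gamma(G_F)
    =\int_\cU\langle P F P,\dif\nu'\rangle
    =\int_\cX\langle F,\dif\nu\rangle,
\]
where the last step uses that $\nu$ is supported in $\cU$ and that at each atom $\xi_i$ of $\nu'$ one has $\ran\nu'(\{\xi_i\})\subseteq\bU_{\xi_i}$ — this must be arranged when invoking Theorem~\ref{T130}, or checked afterwards. Since $\ran M\subseteq\bU_\xi$, the extra atom $\epsilon M\delta_\xi$ is compatible, and $\nu(\{\xi\})=\nu'(\{\xi\})\succeq\epsilon M$, giving the claim.

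\textbf{Main obstacle.} The delicate point is bookkeeping about ranges and projections: one must show $P(x)\rnd(x)P(x)=\rnd(x)$ $\tau$-a.e.\ (not merely at atoms), which requires deducing $\ran\rnd(x)\subseteq\bU_x$ $\tau$-a.e.\ from the stated hypotheses, and one must ensure that the finitely atomic measure produced by Theorem~\ref{T130} can be taken with atom ranges inside the corresponding $\bU_x$ — equivalently, that a representing measure of the \emph{compressed} functional $\Gamma$ automatically has atoms with ranges in $\bU_{\xi_i}$, or can be compressed to have this property. I expect the cleanest route is to build the $\bU_x$-constraint into the space $\cG$ from the start (its elements already satisfy $G(x)=P(x)G(x)P(x)$), so that any representing measure of $\Gamma$, when compressed by $P$, stays a representing measure and has the required range property; then the lift back to $\cX$ is automatic.
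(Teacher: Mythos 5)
Your proposal follows essentially the same route as the paper's proof: restrict to $(\cU,\fU)$, pass to the compressed space $\{P\tilde FP\colon F\in\cE\}$ with the functional induced by $\mu\lceil\fU$, use $(\ast)$ to get strict positivity, invoke Theorem~\ref{T130}, and then compress the resulting atoms by $P(\xi_j)\cdot P(\xi_j)$ so that their ranges lie in $\bU_{\xi_j}$ before lifting back to $(\cX,\fX)$. The two ``obstacles'' you flag are exactly the bookkeeping steps the paper carries out ($P\tilde\rnd P=\tilde\rnd$ from $\ran\mu(\{x\})\subseteq\bU_x$, and the compatibility $\langle (P\tilde FP)(\xi_j),\tilde M_j\rangle=\langle \tilde F(\xi_j),P(\xi_j)\tilde M_jP(\xi_j)\rangle$), so the plan is sound.
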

\begin{proof}
Because of $\cU\in\fX$,  $\mathfrak{U}$ is  a sub-$\sigma$-algebra of $\fX$ on $\cU$, so that $(\cU,\mathfrak{U})$ is a non-trivial measurable space, and $\tilde{\mu}:= \mu\lceil\fU $ belongs to $\cM_q(\cU,\mathfrak{U})$.
As usual, let $\tau$ be the trace measure of $\mu$ and $\rnd $   the Radon--Nikodym derivative of $\mu$ with respect to $\tau$.
Then $\tilde\tau:=\tau\lceil\fU $ is the trace measure of $\tilde\mu$ and $\tilde \rnd :=\rnd \lceil\cU $ is  the Radon--Nikodym derivative of $\tilde\mu$ with respect to $\tilde\tau$.
For $F\in \cE$ we set $\tilde{F}:= F\lceil\cU $.
Since $\ran (\mu(\{x\}))\subseteq\bU_x$ for $x\in\cU$, we have $P\tilde \rnd  P=\tilde{\rnd }$.
Since $\mu\in\cM_\Lambda$ and $\mu(\cX\setminus\mathcal{U})=O$, we obtain 
\begin{equation}\label{C131P.1}\begin{split}
    \Lambda(F)
    &=\Lambda^\mu(F)
    =\int_{\cX }\langle F,\rnd \rangle\dif\tau
    =\int_{\mathcal{U} }\langle F,\rnd \rangle\dif\tau
    =\int_{\mathcal{U} }\langle\tilde{F},\tilde{\rnd }\rangle\dif\tilde{\tau}\\
    &=\int_{\mathcal{U} }\langle\tilde{F},P\tilde{\rnd }P\rangle\dif\tilde{\tau}
    =\int_{\mathcal{U} }\langle P\tilde{F}P,\tilde{\rnd }\rangle\dif\tilde{\tau}
    \qquad\text{for all }F\in\cE.
\end{split}\end{equation}
Then, $\tilde{\cE}:= \{P\tilde{F}P\colon F\in\cE\}$ is a linear subspace of $L^1(\tilde{\mu};\cH_q)$ and $\dim \tilde{\cE}\leq \dim  \cE$, because each basis of $\cE$ gives rise to a spanning set of $\tilde{\cE}$.
In particular, $\tilde{\cE}$ is finite-dimensional.
Obviously, $\tilde{\Lambda}:=\Lambda^{\tilde{\mu}}$ belongs to $\fL(\tilde{\cE})$ and $\tilde{\mu}\in\cM_{\tilde{\Lambda}}$.
Therefore, by Lemma~\ref{L1036+}, $\tilde{\Lambda}(G)\geq0$ for  $G\in\tilde{\cE}_\succeq$.

Take an  $G\in\tilde{\cE}_\succeq$, $ G\neq O$.
Then $G\in\tilde{\cE}$ and $G(x)\succeq  O$ for all $x\in\mathcal{U} $.
Hence, there exists $F\in\cE$ such that $G=P\tilde{F}P$.
For all $x\in\mathcal{U} $, then $P(x)F(x)P(x)=(P\tilde{F}P)(x)=G(x)$.
In particular, $P(x)F(x)P(x)\succeq O$ for all $x\in\mathcal{U} $.
By \eqref{C131P.1}, 
\[ 
    \Lambda(F)
    =\int_{\mathcal{U} }\langle P\tilde{F}P,\tilde{\rnd }\rangle\dif\tilde{\tau}
    =\tilde{\Lambda}(P\tilde{F}P)
    =\tilde{\Lambda}(G).
\]
Now assume that $\tilde{\Lambda}(G)=0$.
Then $ \Lambda(F)=0$ and condition~($\ast$) yields $P(x)F(x)P(x)=O $ for all $x\in\mathcal{U} $.
Thus $ G(x)=O$ for all $x\in\cU$, contradicting $ G\neq O$.
Consequently, $\tilde{\Lambda}(G)\neq0$, so $\tilde{\Lambda}(G)>0$.
This proves that $\tilde{\Lambda}\in\fL(\tilde{\cE})$ is strictly $\tilde{\cE}_\succeq$\nobreakdash-positive.

Let $\xi\in\cU$ and $M\in\cH_{q,\succeq} $ with $\ran (M)\subseteq\bU_\xi$.
We apply Theorem~\ref{T130} to the measurable space $(\cU,\mathfrak{U})$, the subspace $\tilde{\cE}$ of $L^1(\tilde{\mu};\cH_q)$, and the strictly $\tilde{\cE}_\succeq$\nobreakdash-positive functional $\tilde{\Lambda}$. %
Then  there exist $\tilde{\nu}\in\cM^\mathrm{fa}_{\tilde{\Lambda}}$ and $\epsilon\in(0,\infty)$ such that $\tilde{\nu}(\{\xi\})\succeq\epsilon M$.
Set $\xi_0:=\xi$.
According to Theorem~\ref{richter}, applied to the moment functional $\tilde\Lambda$ on $\tilde{\cE}$,  there are points $\xi_1,\dotsc,\xi_n\in\mathcal{U} $ and matrices $\tilde{M}_0,\tilde{M}_1,\dotsc,\tilde{M}_n\in\cH_{q,\succeq} $, $n\in \dN_0$,  with $\tilde{M}_0\succeq\epsilon M$,  such that $\tilde{\nu}:= \sum_{j=0}^n\tilde{M}_j\tilde{\delta}_{\xi_j}$, where the point measures $\tilde{\delta}_{\xi_j}$ are defined on $\mathfrak{U}$.
Then $M_j:=P(\xi_j)\tilde{M}_jP(\xi_j)\in\cH_{q,\succeq}$ and $\ran (M_j)\subseteq\ran (P(\xi_j))=\bU_{\xi_j}$.
Set $\nu:= \sum_{j=0}^n M_j\delta_{\xi_j}$, where $\delta_{\xi_0},\delta_{\xi_1},\dotsc,\delta_{\xi_n}$ are defined on $\fX$.
Then $\nu\in\cM^\mathrm{fa}_q(\cX,\fX)$.
Using \eqref{C131P.1} and Example~\ref{E1700}  we derive 
\[\begin{split}
    \Lambda(F)
    &=\int_{\mathcal{U} }\langle P\tilde{F}P,\tilde{\rnd}\rangle\dif\tilde{\tau}
    =\tilde{\Lambda}(P\tilde{F}P)
    =\Lambda^{\tilde{\nu}}(P\tilde{F}P)
    =\sum_{j=0}^n \langle(P\tilde{F}P)(\xi_j),\tilde{M}_j\rangle\\
    &=\sum_{j=0}^n\langle P(\xi_j)\tilde{F}(\xi_j)P(\xi_j),\tilde{M}_j\rangle
    =\sum_{j=0}^n\langle \tilde{F}(\xi_j),P(\xi_j)\tilde{M}_jP(\xi_j)\rangle\\
    &=\sum_{j=0}^n\langle F(\xi_j),M_j\rangle
    =\Lambda^\nu(F)
\end{split}\]
for all $F\in \cE$.
Therefore, $\nu\in\cM^\mathrm{fa}_\Lambda$.
Because of $M\in\cH_{q,\succeq} $ with $\ran (M)\subseteq\bU_\xi$, we have $P(\xi)MP(\xi)=M$.
Hence, since $\xi=\xi_0$, we obtain
\[\begin{split}
    \nu(\{\xi\})
    &=\nu(\{\xi_0\})
    =M_0
    =P(\xi_0)\tilde{M}_0P(\xi_0)
    \succeq P(\xi_0)(\epsilon M)P(\xi_0)
    =\epsilon M.\qedhere
\end{split}\]
\end{proof}

\section{The set of atoms}\label{setofatoms}
Throughout this section, $\Lambda$ is a moment functional on $\cE$. 
Recall that $\ats(\mu):=\{x\in\cX\colon\mu(\{x\})\neq O\}$ denotes the set of atoms of $\mu\in\cM_q(\cX,\fX)$.

\begin{dfn}\label{D133}
The \emph{set of atoms} of $\Lambda$ is 
\begin{equation}\label{W}
    \cW(\Lambda)
    := \bigcup_{\mu\in\cM_\Lambda}\ats (\mu).
\end{equation}
Furthermore, for  $x\in\cX$, we define
\begin{equation}\label{WW}
    \bW(\Lambda;x)
    :=\bigcup_{\mu\in\cM_\Lambda}\ran (\mu(\{x\})).
\end{equation}
\end{dfn}

The following two lemmas show that we can restrict ourselves to finitely atomic measures.

\begin{lem}\label{L1054}
Let $\mu\in\cM_\Lambda$ and $x\in\cX$.
There exists $\nu\in\cM^\mathrm{fa}_\Lambda$ such that $\nu(\{x\})=\mu(\{x\})$.
\end{lem}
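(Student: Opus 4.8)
Let $\mu\in\cM_\Lambda$ and $x\in\cX$. There exists $\nu\in\cM^\mathrm{fa}_\Lambda$ such that $\nu(\{x\})=\mu(\{x\})$.

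The plan is to split off the atom at $x$ and apply the matricial Richter--Tchakaloff theorem (Theorem~\ref{richter}) to the remainder. Write $M:=\mu(\{x\})\in\cH_{q,\succeq}$ and decompose $\mu=M\delta_x+\mu'$, where $\mu':=\mu\lceil(\fX\cap(\cX\setminus\{x\}))$, so that $\mu'\in\cM_q(\cX,\fX)$, $\mu'(\{x\})=O$, and $\cE\subseteq L^1(\mu';\cH_q)$ since $\cE\subseteq L^1(\mu;\cH_q)$. Define the linear functional $\Lambda':=\Lambda^{\mu'}$ on $\cE$, so $\Lambda'(F)=\Lambda(F)-\langle F(x),M\rangle$ for $F\in\cE$; in particular $\Lambda'\in\fL(\cE)$ with representing measure $\mu'$.

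Next I would apply Theorem~\ref{richter} to $\Lambda'=\Lambda^{\mu'}$, using the set $\cY:=\cX\setminus\{x\}$ (which lies in $\fX$ since $\{x\}\in\fX$), noting $\mu'(\cX\setminus\cY)=\mu'(\{x\})=O$. If $\Lambda'\neq 0$, this yields a finitely atomic measure $\nu'=\sum_{j=1}^N v_jv_j^*\,\delta_{x_j}$ with $x_1,\dotsc,x_N\in\cY$ and $\Lambda'=\Lambda^{\nu'}$; in particular $x\notin\ats(\nu')$. If $\Lambda'=0$, simply take $\nu':=0$ (the zero measure), which is trivially finitely atomic with $\nu'(\{x\})=O$ and $\Lambda^{\nu'}=0=\Lambda'$. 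In either case $\nu'\in\cM^\mathrm{fa}_q(\cX,\fX)$, $\cE\subseteq L^1(\nu';\cH_q)$ (finite atomic measures integrate everything), $\Lambda^{\nu'}=\Lambda'$, and $\nu'(\{x\})=O$.

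Finally, set $\nu:=M\delta_x+\nu'$. Then $\nu$ is finitely atomic, $\cE\subseteq L^1(\nu;\cH_q)$, and for every $F\in\cE$,
\[
    \Lambda^\nu(F)
    =\langle F(x),M\rangle+\Lambda^{\nu'}(F)
    =\langle F(x),M\rangle+\Lambda'(F)
    =\Lambda(F),
\]
so $\nu\in\cM^\mathrm{fa}_\Lambda$. Moreover $\nu(\{x\})=M+\nu'(\{x\})=M=\mu(\{x\})$, which is the claim. There is no serious obstacle here; the only point requiring a moment's care is confirming that restricting $\mu$ to $\cX\setminus\{x\}$ indeed produces an element of $\cM_q(\cX,\fX)$ lying in $L^1$ for all of $\cE$ and that the additivity $\Lambda^{M\delta_x+\nu'}=\Lambda^{M\delta_x}+\Lambda^{\nu'}$ holds, both of which are immediate from \eqref{deflammbdamu}, \eqref{inttrace}, and the description of trace measures and Radon--Nikodym matrices for sums of measures.
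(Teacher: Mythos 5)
Your proposal is correct and follows essentially the same route as the paper: split off $M\delta_x$, apply Theorem~\ref{richter} to the remainder $\Lambda'=\Lambda-\langle\ell_x(\cdot),M\rangle$ with $\cY=\cX\setminus\{x\}$ to get a finitely atomic $\nu'$ supported away from $x$, and add back $M\delta_x$. Your explicit handling of the degenerate case $\Lambda'=0$ (which the hypothesis $\Lambda^\mu\neq 0$ of Theorem~\ref{richter} technically requires) is a small point the paper glosses over, but otherwise the arguments coincide.
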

\begin{proof}
Let $M:=\mu(\{ x \})$ and  $\mathcal{Y}:=\cX\setminus\{ x \}$.
Then $\mathcal{Y}\in\fX$ and $\mu':=\mu-M\delta_ x $ belongs to $\cM_q(\cX,\fX)$ and satisfies $\mu'(\cX\setminus\mathcal{Y})= O$.
Further, $\Lambda':=\Lambda-\langle\ell_x(\cdot),M\rangle$ is also a moment functional on $\cE$ and $\mu'\in\cM_{\Lambda'}$.
We apply Theorem~\ref{richter} to $\Lambda'$ and obtain a measure $\nu'\in\cM^\mathrm{fa}_{\Lambda'}$ such that $\ats (\nu')\subseteq\mathcal{Y}$, so that $\nu'(\{ x \})=O$.
Consequently, $\nu:=\nu'+M\delta_ x $ is in $\cM^\mathrm{fa}_\Lambda$ and  $\nu(\{ x \})=M$.
\end{proof}

\begin{lem}\label{L1459}
$\cW(\Lambda)=\bigcup_{\nu\in\cM^\mathrm{fa}_\Lambda}\ats (\nu)$\, and\, $\bW(\Lambda;x)=\bigcup_{\nu\in\cM^\mathrm{fa}_\Lambda}\ran (\nu(\{x\}))$ for all $x\in\cX$.
\end{lem}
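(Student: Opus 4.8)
The statement to prove is Lemma~\ref{L1459}: that $\cW(\Lambda)$ and $\bW(\Lambda;x)$ can be computed using only finitely atomic representing measures.

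The plan is to reduce the unions over all of $\cM_\Lambda$ in the definitions \eqref{W} and \eqref{WW} to unions over $\cM^\mathrm{fa}_\Lambda$ by invoking Lemma~\ref{L1054}. First I would note the trivial inclusions: since $\cM^\mathrm{fa}_\Lambda\subseteq\cM_\Lambda$, we immediately get $\bigcup_{\nu\in\cM^\mathrm{fa}_\Lambda}\ats(\nu)\subseteq\cW(\Lambda)$ and, for fixed $x\in\cX$, $\bigcup_{\nu\in\cM^\mathrm{fa}_\Lambda}\ran(\nu(\{x\}))\subseteq\bW(\Lambda;x)$. The work is in the reverse inclusions.

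For the reverse inclusion concerning $\bW(\Lambda;x)$: fix $x\in\cX$ and let $\mu\in\cM_\Lambda$. By Lemma~\ref{L1054} there exists $\nu\in\cM^\mathrm{fa}_\Lambda$ with $\nu(\{x\})=\mu(\{x\})$, hence $\ran(\mu(\{x\}))=\ran(\nu(\{x\}))\subseteq\bigcup_{\nu'\in\cM^\mathrm{fa}_\Lambda}\ran(\nu'(\{x\}))$. Taking the union over all $\mu\in\cM_\Lambda$ gives $\bW(\Lambda;x)\subseteq\bigcup_{\nu\in\cM^\mathrm{fa}_\Lambda}\ran(\nu(\{x\}))$, which completes that equality. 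For the reverse inclusion concerning $\cW(\Lambda)$: let $y\in\cW(\Lambda)$, so there is $\mu\in\cM_\Lambda$ with $\mu(\{y\})\neq O$. Applying Lemma~\ref{L1054} with $x=y$ produces $\nu\in\cM^\mathrm{fa}_\Lambda$ with $\nu(\{y\})=\mu(\{y\})\neq O$, i.e.\ $y\in\ats(\nu)\subseteq\bigcup_{\nu'\in\cM^\mathrm{fa}_\Lambda}\ats(\nu')$. Hence $\cW(\Lambda)\subseteq\bigcup_{\nu\in\cM^\mathrm{fa}_\Lambda}\ats(\nu)$, and combined with the trivial inclusion we obtain the first asserted equality.

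There is essentially no obstacle here: the lemma is a direct corollary of Lemma~\ref{L1054}, which already does all the real work of trading an arbitrary representing measure for a finitely atomic one while preserving the mass at a prescribed point. The only thing to be mildly careful about is that Lemma~\ref{L1054} is stated pointwise (for a single $x$), so one must invoke it once for each atom $y$ under consideration rather than expecting a single finitely atomic measure to witness all atoms simultaneously; but since the definitions \eqref{W} and \eqref{WW} are themselves unions over measures, this pointwise application suffices.
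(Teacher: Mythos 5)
Your proof is correct and follows essentially the same route as the paper: both directions reduce to the trivial inclusion from $\cM^\mathrm{fa}_\Lambda\subseteq\cM_\Lambda$ plus a pointwise application of Lemma~\ref{L1054} to replace an arbitrary representing measure by a finitely atomic one with the same mass at the point in question. The only (immaterial) difference is that you argue at the level of the sets $\ran(\mu(\{x\}))$ while the paper chases individual vectors $v\in\bW(\Lambda;x)$.
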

\begin{proof}
Clearly,
\begin{equation}\label{L1459.1}
    \bigcup\nolimits_{\nu\in\cM^\mathrm{fa}_\Lambda}~ \ats (\nu)
    \subseteq\bigcup\nolimits_{\mu\in\cM_\Lambda}~ \ats (\mu)
    =\cW(\Lambda).
\end{equation}
Let $\xi\in \cW(\Lambda)$.
By \eqref{W},  there exists $\mu\in\cM_\Lambda$ such that $\xi\in\ats (\mu)$.
Then, by definition,  $\mu(\{\xi\})\neq O$.
Lemma~\ref{L1054} yields the existence of $\nu\in\cM^\mathrm{fa}_\Lambda$ such that $\nu(\{\xi\})=\mu(\{\xi\})$.
Hence $\xi\in\ats (\nu)$.
Combined with \eqref{L1459.1} it follows that $\cW(\Lambda)=\bigcup_{\nu\in\cM^\mathrm{fa}_\Lambda}\ats (\nu)$.

Let $x\in\cX$.
From \eqref{WW} we infer
\begin{equation}\label{L1459.2}
    \bigcup\nolimits_{\nu\in\cM^\mathrm{fa}_\Lambda}~ \ran (\nu(\{x\}))
    \subseteq\bigcup\nolimits_{\mu\in\cM_\Lambda}~ \ran (\mu(\{x\}))
    =\bW(\Lambda;x).
\end{equation}
Now let $v\in\bW(\Lambda;x)$.
Then there exists $\mu\in\cM_\Lambda$ such that $v\in\ran (\mu(\{x\}))$.
By Lemma~\ref{L1054}, there is $\nu\in\cM^\mathrm{fa}_\Lambda$ such that $\nu(\{x\})=\mu(\{x\})$.
Hence $v\in\ran (\nu(\{x\}))$.
Combined with \eqref{L1459.2} we get $\bW(\Lambda;x)=\bigcup_{\nu\in\cM^\mathrm{fa}_\Lambda}\ran (\nu(\{x\}))$.
\end{proof}

\begin{lem}\label{R0742}
For all $x\in\cX$, $\bW(\Lambda;x)$ is a linear subspace of $\dC^q$ and there exists 
$\nu\in\cM^\mathrm{fa}_\Lambda$ such that $\bW(\Lambda;x)=\ran (\nu(\{x\}))$.
\end{lem}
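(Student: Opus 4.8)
The plan is to prove the two assertions in sequence: first that $\bW(\Lambda;x)$ is a linear subspace, and then — crucially — that the supremum defining it (a union of ranges over all finitely atomic representing measures) is actually attained by a single such measure. By Lemma~\ref{L1459}, we may work throughout with $\cM^\mathrm{fa}_\Lambda$ in place of $\cM_\Lambda$, so $\bW(\Lambda;x)=\bigcup_{\nu\in\cM^\mathrm{fa}_\Lambda}\ran(\nu(\{x\}))$.

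\emph{Step 1: $\bW(\Lambda;x)$ is a subspace.} It suffices to show the union is directed, i.e.\ that given $\nu_1,\nu_2\in\cM^\mathrm{fa}_\Lambda$ there is $\nu_3\in\cM^\mathrm{fa}_\Lambda$ with $\ran(\nu_3(\{x\}))\supseteq\ran(\nu_1(\{x\}))+\ran(\nu_2(\{x\}))$; once the family of subspaces $\{\ran(\nu(\{x\}))\}$ is directed by inclusion, their union is a subspace (of the finite-dimensional space $\dC^q$). To produce $\nu_3$, form the average $\mu:=\tfrac12(\nu_1+\nu_2)\in\cM_q(\cX,\fX)$. A direct check from \eqref{inttrace} shows $\Lambda^\mu=\tfrac12(\Lambda^{\nu_1}+\Lambda^{\nu_2})=\Lambda$, so $\mu\in\cM_\Lambda$, and $\mu(\{x\})=\tfrac12(\nu_1(\{x\})+\nu_2(\{x\}))$, whence $\ran(\mu(\{x\}))=\ran(\nu_1(\{x\}))+\ran(\nu_2(\{x\}))$ (for nonnegative matrices $A,B$ one has $\ran(A+B)=\ran A+\ran B$). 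Applying Lemma~\ref{L1054} to $\mu$ at the point $x$ yields $\nu_3\in\cM^\mathrm{fa}_\Lambda$ with $\nu_3(\{x\})=\mu(\{x\})$, which is the required directedness.

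\emph{Step 2: attainment.} Since $\bW(\Lambda;x)$ is a finite-dimensional subspace of $\dC^q$, choose a basis $v_1,\dots,v_r$ of it; by definition of the union, for each $i$ there is $\nu_i\in\cM^\mathrm{fa}_\Lambda$ with $v_i\in\ran(\nu_i(\{x\}))$. Iterating the directedness from Step 1 finitely many times (combining $\nu_1,\dots,\nu_r$ pairwise), we obtain a single $\nu\in\cM^\mathrm{fa}_\Lambda$ with $\ran(\nu(\{x\}))\supseteq\ran(\nu_1(\{x\}))+\dots+\ran(\nu_r(\{x\}))\ni v_1,\dots,v_r$, hence $\ran(\nu(\{x\}))\supseteq\bW(\Lambda;x)$. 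The reverse inclusion $\ran(\nu(\{x\}))\subseteq\bW(\Lambda;x)$ is immediate from the definition \eqref{WW}. Therefore $\bW(\Lambda;x)=\ran(\nu(\{x\}))$.

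\emph{Expected main obstacle.} The only real content is making the averaging argument rigorous — in particular verifying that convex combinations of representing measures are again representing measures (so that one stays inside $\cM_\Lambda$, and then inside $\cM^\mathrm{fa}_\Lambda$ after invoking Lemma~\ref{L1054}) and that the range of a sum of nonnegative Hermitian matrices is the sum of the ranges. Both are elementary, so this lemma should be short; the structural point worth isolating is simply that $\{\ran(\nu(\{x\})):\nu\in\cM^\mathrm{fa}_\Lambda\}$ is upward directed, after which finite-dimensionality does the rest.
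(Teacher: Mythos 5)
Your proposal is correct and follows essentially the same route as the paper: both rest on the averaging trick $\tfrac12(\nu_1+\nu_2)\in\cM_\Lambda$ together with the fact that for nonnegative Hermitian matrices the range of a sum contains each summand's range, first to get closure under addition and then, by combining finitely many measures, to realize all of $\bW(\Lambda;x)$ as the range of a single $\nu(\{x\})$. The only cosmetic difference is that you phrase the key point as upward directedness of the family of ranges and make a (harmless but unnecessary) detour through Lemma~\ref{L1054}, whereas the paper notes directly that a convex combination of finitely atomic representing measures is again a finitely atomic representing measure.
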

\begin{proof}
By definition, $\bW(\Lambda;x)\subseteq\dC^q$.
Because of $\Lambda\in\fL(\cE)$, there exists $\sigma\in\cM_\Lambda$.
By \eqref{WW}, $0\in\ran (\sigma(\{x\}))\subseteq\bW(\Lambda;x)$, so that $\bW(\Lambda;x)\neq\emptyset$.
Let $\lambda\in\dR$ and  $u\in\bW(\Lambda;x)$.
Again by  \eqref{WW},  there exists $\mu\in\cM_\Lambda$ such that $u\in\ran (\mu(\{x\}))$.
Consequently, $\lambda u\in\ran (\mu(\{x\}))\subseteq\bW(\Lambda;x)$.
Now let $u,v\in\bW(\Lambda;x)$.
Then there exist $\mu,\nu\in\cM_\Lambda$ such that $u\in\ran (\mu(\{x\}))$ and $v\in\ran (\nu(\{x\}))$.
Clearly, $\sigma:=\frac{1}{2}(\mu+\nu)$ belongs to $\cM_\Lambda$ and fulfills $2\sigma(\{x\})\succeq\mu(\{x\})\succeq O$ as well as $2\sigma(\{x\})\succeq\nu(\{x\})\succeq O$.
In particular, $\ran (\mu(\{x\}))\subseteq\ran (\sigma(\{x\}))$ and $\ran (\nu(\{x\}))\subseteq\ran (\sigma(\{x\}))$.
Consequently, $u+v\in\ran (\mu(\{x\}))+\ran (\nu(\{x\}))\subseteq\ran (\sigma(\{x\}))\subseteq\bW(\Lambda;x)$.
This proves that $\bW(\Lambda;x)$ is a linear subspace of $\dC^q$. 

Let us take vectors $v_1,\dotsc,v_n\in\bW(\Lambda;x)$, $n\in \dN$, such that $\bW(\Lambda;x)=\Lin \{v_1,\dotsc,v_n\}$.
Because of Lemma~\ref{L1459}, for $j\in\{1,\dotsc,n\}$, there exists $\nu_j\in\cM^\mathrm{fa}_\Lambda$ such that $v_j\in\ran (\nu_j(\{x\}))$.
Then $\nu:=\frac{1}{n}(\nu_1+\dotsb+\nu_n)$ belongs to $\cM^\mathrm{fa}_\Lambda$ and $n\nu(\{x\})\succeq\nu_j(\{x\})\succeq O$ for all  $j$.
Therefore, $v_j\in\ran (\nu_j(\{x\}))\subseteq\ran (\nu(\{x\}))$ for $j\in\{1,\dotsc,n\}$.
Consequently, $\Lin \{v_1,\dotsc,v_n\}\subseteq\ran (\nu(\{x\}))$.
Hence $\bW(\Lambda;x)\subseteq\ran (\nu(\{x\}))$.
Since $\ran (\nu(\{x\}))\subseteq\bW(\Lambda;x)$ by definition, we have proved that $\bW(\Lambda;x)=\ran (\nu(\{x\}))$.
\end{proof}

Let $L$ be a matrix moment functional on $E$.
Then, as it is easily seen, $\tr L$ is a scalar moment functional on $E$ and $\tr\mu\in\cM_{\tr L}$ for each $\mu\in\cM_L$.
By Proposition~\ref{equivalence}, the associated functional $\Lambda_L$ is a moment functional on $\cE:=\cH_q(E)$ and $\cM_{\Lambda_L}=\cM_L$.
The following  example shows that the sets of atoms of $\tr L$ and $\Lambda_L$ are different in general.

\begin{exm}\label{E0635}
We consider the case $q=2$ and $\mathcal{X} =\dR $.
Let $E:= \Lin_\dR \{x^0,x^1,x^2\}$.
We define a linear mapping $L\colon E\to\cH_2$ by $L:=e_{11}\ell_{1}+e_{22}\ell_{-1}$, that is, 
\begin{equation}\label{defLE}
    L(f)=\left(\begin{smallmatrix}f(1)&0\\0&f(-1)\end{smallmatrix}\right) \quad  \textrm{for }f\in E.
\end{equation}
Then 
\begin{equation}\label{deflambdaL}
    \Lambda_L(F)= \tr e_{11}F(1)+\tr e_{22}F(-1)= f_{11}(1)+f_{22}(-1),\quad F=(f_{ij})\in \cH_2(E).
\end{equation}

Define $\cW(L):=\bigcup_{\mu\in\cM_L}\ats (\mu)$.
Since $\cM_L=\cM_{\Lambda_L}$ by Proposition~\ref{equivalence}, we have $\cW(L)=\cW(\Lambda_L)$.
We show that 
\begin{equation}\label{equWL}
    \cW(L)
    =\cW(\Lambda_L)
    =\{1,-1\}.
\end{equation} 
By the definition of $L$, we have  $e_{11}\delta_{1}+e_{22}\delta_{-1}\in \cM_L$, so that $1,-1\in \cW(L)$. 
Let $\mu$ be an arbitrary representing measure of $\Lambda_L$.
Then, by \eqref{deflambdaL},  
\[
    0
    =\Lambda_L((x-1)^2e_{11})
    =\int_\dR \langle (x-1)^2 e_{11},\Phi\rangle \dif \tau
    =\int_\dR (x-1)^2 \phi_{11}(x) \dif \tau.
\]
Hence $\phi_{11}(x)=0$\enspace$\tau$\nobreakdash-a.\,e.\ on $\dR\setminus\{1\}$.
Similarly, replacing $(x-1)^2e_{11}$ by $(x+1)^2e_{22}$, we obtain $\phi_{22}(x)=0$\enspace$\tau$\nobreakdash-a.\,e.\ on $\dR\setminus\{-1\}$.
Therefore, since $\Phi(x)\succeq O$ on $\dR$, we get $\Phi(x)=0$\enspace$\tau$\nobreakdash-a.\,e.\ on $\dR\setminus\{1,-1\}$.
Thus $\mu(\dR\setminus\{1,-1\})=O$, so the atoms of $\mu$ are contained in the set $\{1,-1\}$.
This proves \eqref{equWL}.
(In fact, $e_{11}\delta_{1}+e_{22}\delta_{-1}$ is the unique representing measure of $\Lambda_L$.)

Now let $\xi\geq1$.
We compute that $\tr L=\xi^{-2}\ell_{-\xi}+2(1-\xi^{-2})\ell_0+\xi^{-2}\ell_{\xi}$ on $E$.
Hence, $\xi^{-2}\delta_{-\xi}+2(1-\xi^{-2})\delta_{0}+\xi^{-2}\delta_{\xi}$ is a representing measure of the moment functional $\tr L$ on $E$.
In particular, $-\xi, 0,\xi$ belong to the set $\cW(\tr L):=\bigcup_{\mu\in\cM_{\tr L}}\ats (\mu)$.

Thus, by the preceding, we have shown that $\cM_{\tr L}\neq\{\tr\mu\colon\mu\in\cM_L\}$ and $\cW(L)\subsetneqq\cW(\tr L)$.
\end{exm}

In the next section we  show that the set $\cW(\Lambda)$ coincides  with the core set.
The following are  preliminary notions and results to achieve this goal. 

\begin{dfn}\label{D139+}
Suppose that $P\colon\cX\to\cH_q$ is a mapping such that $P(x)$ is an orthogonal projection for all $x\in\cX$.
Then we {define}
\begin{align}
    \cN_+(\Lambda,P)&:=\{F\in\ker(\Lambda)\colon(PFP)(x)\succeq O\text{ for all }x\in\cX\}\notag\\%
    \cV_+(\Lambda,P)&:=\bigcap_{F\in\cN_+(\Lambda,P)}\{x\in \cX:\det(PFP+P^\bot)=0\}\label{VggP}.
\end{align}
Furthermore, for  $x\in\cX$, we set
\begin{equation}\label{VV+P}
    \bV_+(\Lambda,P;x)
    :=\bigcap_{F\in\cN_+(\Lambda,P)}\ker(PFP+P^\bot)(x).
\end{equation}
\end{dfn}

Note that $P^\bot:=I-P$.
For simplicity, the set $\{x\in\cX\colon\det(PFP+P^\bot)(x)=0\}$ in \eqref{VggP} is  abbreviated by $\{\det(PFP+P^\bot)=0\}$.
A similar notation will be used for other sets as well.

Clearly, $\cN_+(\Lambda,P)$ is a convex cone in $\cE$ and $\bV_+(\Lambda,P;x)$ is a linear subspace of $\dC^q$ for $x\in \cX$.

\begin{lem}\label{P123P}
Suppose that $\mu\in\cM_\Lambda$, $\xi\in\ats (\mu)$, and $F\in\ker\Lambda$.
Let  $\mathcal{V}\in\fX$ be a set such that $\mu(\cX\setminus\mathcal{V})=O$.
For each $x\in\cX$, let $\bV_x$ be a linear subspace of $\dC^q$ and let $P(x)$  denote the orthogonal projection  on $\bV_x$.
Suppose that $(PFP)(x)\succeq O$ and $\ran (\mu(\{x\}))\subseteq\bV_x$ for all $x\in\mathcal{V}$.
Then,
\begin{align*}
    \ran (\mu(\{\xi\}))&\subseteq\ker(PFP+P^\bot)(\xi)&
    &\text{and}&
    \det(PFP+P^\bot)(\xi)&=0.
\end{align*}
\end{lem}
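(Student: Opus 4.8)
The plan is to exploit the positivity inherited from the representing measure $\mu$ together with the fact that $F\in\ker\Lambda$. First I would write everything in terms of the trace measure $\tau$ of $\mu$ and its Radon--Nikodym matrix $\rnd$, as in Definition~\ref{defmf}. Since $\mu(\cX\setminus\cV)=O$, we have $\tau(\cX\setminus\cV)=0$ and $\rnd(x)\succeq O$ for all $x$, with $\ran\rnd(x)\subseteq\bV_x$ for $\tau$-a.e.\ $x\in\cV$ (this follows from $\ran(\mu(\{x\}))\subseteq\bV_x$ being assumed at atoms, but more importantly one can arrange $P\rnd P=\rnd$ $\tau$-a.e.\ by the same argument as in the proof of Corollary~\ref{C131P}; alternatively, restrict attention to the atom $\xi$ directly, see below). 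The key computation is
\[
    0=\Lambda(F)=\int_\cX\langle F,\rnd\rangle\,\dif\tau
    =\int_\cV\langle PFP,\rnd\rangle\,\dif\tau,
\]
where the last equality uses $\langle F,\rnd\rangle=\langle F,P\rnd P\rangle=\langle PFP,\rnd\rangle$ $\tau$-a.e.\ on $\cV$. Now the integrand $\langle (PFP)(x),\rnd(x)\rangle$ is nonnegative for $\tau$-a.e.\ $x\in\cV$, because $(PFP)(x)\succeq O$ there by hypothesis and $\rnd(x)\succeq O$, so by \eqref{scalarpos} the scalar product is $\geq 0$. A nonnegative integrable function with zero integral vanishes $\tau$-a.e., hence $\langle(PFP)(x),\rnd(x)\rangle=0$ for $\tau$-a.e.\ $x\in\cV$.

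Next I would pin this down at the specific atom $\xi$. Since $\xi\in\ats(\mu)$, we have $\mu(\{\xi\})\neq O$, so $\tau(\{\xi\})=\tr\mu(\{\xi\})>0$; therefore $\xi$ is not in any $\tau$-null set, and the a.e.\ conclusion above forces $\langle(PFP)(\xi),\rnd(\xi)\rangle=0$. Moreover $\mu(\{\xi\})=\rnd(\xi)\,\tau(\{\xi\})$, so $\ran(\mu(\{\xi\}))=\ran\rnd(\xi)$ and, up to the positive scalar $\tau(\{\xi\})$, the matrix $\mu(\{\xi\})$ is a nonnegative multiple of $\rnd(\xi)$; in particular $\langle(PFP)(\xi),\mu(\{\xi\})\rangle=0$ as well. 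Applying Lemma~\ref{tracezero} with $A=(PFP)(\xi)\succeq O$ and $B=\mu(\{\xi\})\succeq O$ (or $B=\rnd(\xi)$), the equivalence (i)$\leftrightarrow$(v) gives $\ran(PFP)(\xi)\perp\ran\mu(\{\xi\})$, i.e.\ $\ran\mu(\{\xi\})\subseteq\ker(PFP)(\xi)$.

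It remains to upgrade $\ker(PFP)(\xi)$ to $\ker(PFP+P^\bot)(\xi)$ and to deduce the determinant statement. Here I would use the block-matrix viewpoint: on $\dC^q=\bV_\xi\oplus\bV_\xi^\perp$ the operator $P^\bot(\xi)$ is $O\oplus I$, while $(PFP)(\xi)$ has the form $\tilde F\oplus O$ with $\tilde F\succeq O$ acting on $\bV_\xi$. Hence $(PFP+P^\bot)(\xi)=\tilde F\oplus I$, and its kernel equals $\ker\tilde F$, which sits inside $\bV_\xi$. Since $\ran\mu(\{\xi\})\subseteq\bV_\xi$ by hypothesis and $\ran\mu(\{\xi\})\subseteq\ker(PFP)(\xi)$ from the previous paragraph, restricting to the block $\bV_\xi$ yields $\ran\mu(\{\xi\})\subseteq\ker\tilde F=\ker(PFP+P^\bot)(\xi)$. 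Finally, because $\mu(\{\xi\})\neq O$, this kernel is nonzero, so $(PFP+P^\bot)(\xi)$ is singular, i.e.\ $\det(PFP+P^\bot)(\xi)=0$. The only mildly delicate point is the passage from $\langle F,\rnd\rangle$ to $\langle PFP,\rnd\rangle$ $\tau$-a.e., which rests on $\ran\rnd(x)\subseteq\bV_x$ $\tau$-a.e.\ on $\cV$; this can either be taken as part of the hypotheses (it holds at atoms and can be assumed on the chosen version of $\rnd$) or proved exactly as in Corollary~\ref{C131P}, and I expect it to be the main technical obstacle to state cleanly.
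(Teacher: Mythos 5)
Your proof is correct and follows essentially the same route as the paper's: nonnegativity of $\langle PFP,\rnd\rangle$ on $\cV$ together with $\Lambda(F)=0$ forces $\langle (PFP)(\xi),\mu(\{\xi\})\rangle=0$, Lemma~\ref{tracezero} then gives $(PFP)(\xi)\,\mu(\{\xi\})=O$, and the relation $P(\xi)\mu(\{\xi\})=\mu(\{\xi\})$ upgrades this to $(PFP+P^\bot)(\xi)\,\mu(\{\xi\})=O$, whence both conclusions. The ``delicate point'' you flag is treated no more carefully in the paper itself, which simply asserts $(P\rnd P)(x)=\rnd(x)$ on $\cV$ from the range hypothesis; this is immediate at atoms (where $\rnd(x)$ is a positive scalar multiple of $\mu(\{x\})$), and in every application of the lemma the measure is finitely atomic, so $\tau$ is concentrated on $\ats(\mu)$ and the identity holds $\tau$\nobreakdash-a.\,e., which is all the argument needs.
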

\begin{proof}
Let  $\rnd $  be the Radon--Nikodym matrix of $\mu$ with respect to  the trace measure $\tau$ of $\mu$.
Since $\ran (\mu(\{x\}))\subseteq\bV_x$, we have $(P\rnd P)(x)=\rnd (x)$ for all $x\in\mathcal{V}$.
Hence,
\[
    \langle F(x),\rnd (x)\rangle
    =\langle F(x),(P\rnd P)(x)\rangle
    =\langle(PFP)(x),\rnd (x)\rangle.
\]
Since $(PFP)(x)\succeq O$ by assumption and $\rnd (x)\succeq O$ by our convention, $\langle(PFP)(x),\rnd (x)\rangle\geq0$ by \eqref{scalarpos} for $x\in\mathcal{V}$.
From $\mu(\cX\setminus\mathcal{V})=O$ and $\xi\in\ats (\mu)$ we  infer $\tau(\cX\setminus\mathcal{V})=0$ and $\xi\in\mathcal{V}$.
We derive
\[\begin{split}
    0
    &=\Lambda(F)
    =\int_\cX\langle F,\rnd \rangle\dif\tau
    =\int_\mathcal{V}\langle F,\rnd \rangle\dif\tau
    =\int_\mathcal{V}\langle PFP,\rnd \rangle\dif\tau\\
    &\geq\langle(PFP)(\xi),\rnd (\xi)\rangle\tau(\{\xi\})
    \geq0,
\end{split}\]
implying $\langle(PFP)(\xi),\mu(\{\xi\})\rangle=0$.
Hence $(PFP)(\xi)\mu(\{\xi\})=  O $ by Lemma~\ref{tracezero}.
Since $\ran (\mu(\{\xi\}))\subseteq\bV_\xi$ by assumption, we get $P(\xi)\mu(\{\xi\})=\mu(\{\xi\})$.
Using that $P(\xi)$ is an orthogonal projection the latter implies that $(PFP+P^\bot)(\xi)\mu(\{\xi\})=  O $.
Hence $\ran (\mu(\{\xi\}))\subseteq\ker(PFP+P^\bot)(\xi)$.
Since $\mu(\{\xi\})\neq  O $, we have $\ran (\mu(\{\xi\}))\neq\{0\}$, so that $\ker(PFP+P^\bot)(\xi)\neq\{0\}$ and therefore $\det(PFP+P^\bot)(\xi)=0$.
\end{proof}

\begin{prop}\label{P140iP}
Let $\mu\in\cM_\Lambda$.
For $x\in\cX$, let $\bV_x$ and  $P(x)$ be as in Lemma~\ref{P123P}.
Suppose that $\ran (\mu(\{x\}))\subseteq\bV_x$ for $x\in\cX$.
Then we have $\ats (\mu)\subseteq\cV_+(\Lambda,P)$ and $\ran (\mu(\{x\}))\subseteq\bV_+(\Lambda,P;x)$ for all $x\in\cX$.
\end{prop}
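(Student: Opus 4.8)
The plan is to reduce the statement to a pointwise application of Lemma~\ref{P123P}, using the measure $\mu$ together with the trivial choice $\mathcal{V}:=\cX$. First I would observe that the hypotheses of Proposition~\ref{P140iP} are precisely those of Lemma~\ref{P123P} for the set $\mathcal{V}=\cX$: indeed $\cX\in\fX$ and $\mu(\cX\setminus\cX)=\mu(\emptyset)=O$, the subspaces $\bV_x$ and the projections $P(x)$ are the same, and we are assuming $\ran(\mu(\{x\}))\subseteq\bV_x$ for all $x\in\cX$. The only additional hypothesis in Lemma~\ref{P123P} is that $(PFP)(x)\succeq O$ for all $x$, but this is exactly the defining condition of membership $F\in\cN_+(\Lambda,P)$, together with $F\in\ker\Lambda$.

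Next, fix $\xi\in\ats(\mu)$ and let $F\in\cN_+(\Lambda,P)$ be arbitrary. Then $F\in\ker\Lambda$ and $(PFP)(x)\succeq O$ for all $x\in\cX$, so Lemma~\ref{P123P} applies with $\mathcal{V}=\cX$ and yields
\[
    \ran(\mu(\{\xi\}))\subseteq\ker(PFP+P^\bot)(\xi)
    \quad\text{and}\quad
    \det(PFP+P^\bot)(\xi)=0.
\]
Since $F\in\cN_+(\Lambda,P)$ was arbitrary, the second relation shows $\xi\in\{\det(PFP+P^\bot)=0\}$ for every $F\in\cN_+(\Lambda,P)$, hence $\xi\in\cV_+(\Lambda,P)$ by the definition \eqref{VggP}. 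As $\xi\in\ats(\mu)$ was arbitrary, this gives $\ats(\mu)\subseteq\cV_+(\Lambda,P)$.

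For the second assertion, let $x\in\cX$. If $x\notin\ats(\mu)$, then $\mu(\{x\})=O$, so $\ran(\mu(\{x\}))=\{0\}\subseteq\bV_+(\Lambda,P;x)$ trivially (recall $\bV_+(\Lambda,P;x)$ is a linear subspace of $\dC^q$, as noted after Definition~\ref{D139+}). If $x\in\ats(\mu)$, then for every $F\in\cN_+(\Lambda,P)$ the first conclusion of Lemma~\ref{P123P} (applied with $\xi=x$) gives $\ran(\mu(\{x\}))\subseteq\ker(PFP+P^\bot)(x)$; intersecting over all $F\in\cN_+(\Lambda,P)$ and invoking the definition \eqref{VV+P} yields $\ran(\mu(\{x\}))\subseteq\bV_+(\Lambda,P;x)$. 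This completes the proof. I do not expect any genuine obstacle here: the proposition is essentially a packaging of Lemma~\ref{P123P}, and the only minor point to be careful about is the case $x\notin\ats(\mu)$ in the second statement, which is handled by the triviality that $\{0\}$ lies in every linear subspace.
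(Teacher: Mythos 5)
Your proposal is correct and follows essentially the same route as the paper: apply Lemma~\ref{P123P} with $\mathcal{V}=\cX$ to each $F\in\cN_+(\Lambda,P)$ and each atom $\xi$, intersect over $F$ using the definitions \eqref{VggP} and \eqref{VV+P}, and dispose of non-atoms via $\ran(\mu(\{x\}))=\{0\}$. No gaps.
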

\begin{proof}
Let $F\in\cN_+(\Lambda,P)$.
Then $F\in\ker\Lambda$ and $(PFP)(x)\succeq O$ for all $x\in\cX$.
Thus,  Lemma~\ref{P123P} applies with $\mathcal{V}=\cX$  and yields $\ran (\mu(\{\xi\}))\subseteq\ker(PFP+P^\bot)(\xi)$ for  $\xi\in\ats (\mu)$ and $\ats (\mu)\subseteq\{\det(PFP+P^\bot)= 0\}$.
Since $F\in\cN_+(\Lambda,P)$ was arbitrary, 
\[
    \ran (\mu(\{\xi\}))
    \subseteq\bigcap_{F\in\cN_+(\Lambda,P)}\ker(PFP+P^\bot)(\xi)
    =\bV_+(\Lambda,P;\xi)
\]
for all $\xi\in\ats (\mu)$ and
\[
    \ats (\mu)
    \subseteq\bigcap_{F\in\cN_+(\Lambda,P)}\{\det(PFP+P^\bot)=0\}
    =\cV_+(\Lambda,P).
\]
Therefore, since  $\mu(\{x\})=O$ for $x\in\cX\setminus\ats (\mu)$,     $\ran (\mu(\{x\}))\subseteq\bV_+(\Lambda,P;x)$.
\end{proof}

\begin{prop}\label{P144P}
Let $P\colon\cX\to\cH_q$ be a mapping such that $P(x)$ is an orthogonal projection  for all $x\in\cX$.
There exists $S\in\cN_+(\Lambda,P)$ such that
\begin{align}
    \cV_+(\Lambda,P)
    &=\{\det(PSP+P^\bot)=0\},\label{vlp1}\\
    \bV_+(\Lambda,P;x)
    &=\ker(PSP+P^\bot)(x)\quad \text{for }x\in\cX.\label{vlp2}
\end{align}
\end{prop}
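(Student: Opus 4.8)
The plan is to produce the single element $S$ by "compressing" the convex cone $\cN_+(\Lambda,P)$ into one matrix-valued function. The key observation is that $\cN_+(\Lambda,P)$ is a convex cone in the \emph{finite-dimensional} vector space $\cE$, so it is contained in a finitely generated subcone; more precisely, since $\cE$ is finite-dimensional, we may pick finitely many elements $F_1,\dotsc,F_m\in\cN_+(\Lambda,P)$ whose positive span is dense in $\cN_+(\Lambda,P)$, and in fact (as every element of a finite-dimensional cone is a nonnegative combination of the $F_i$ by Carath\'eodory's theorem applied to $\cN_+(\Lambda,P)$ viewed inside a suitable simplicial subdivision) it suffices to take a spanning set $F_1,\dotsc,F_m$ of $\cN_+(\Lambda,P)$ in the sense that $\cN_+(\Lambda,P)=\cone\{F_1,\dotsc,F_m\}$. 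I would then set
\[
    S:=\sum_{i=1}^m F_i.
\]
First I would check $S\in\cN_+(\Lambda,P)$: indeed $\cN_+(\Lambda,P)$ is a cone, hence closed under finite sums, so $S\in\ker\Lambda$ and $(PSP)(x)=\sum_i (PF_iP)(x)\succeq O$ for all $x\in\cX$.

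The heart of the argument is the pointwise linear-algebra fact: for a finite collection of matrices $A_1,\dotsc,A_m\in\cH_{q,\succeq}$ one has
\[
    \bigcap_{i=1}^m\ker A_i=\ker\Bigl(\sum_{i=1}^m A_i\Bigr),
\]
because $\bigl\langle(\sum_i A_i)v,v\bigr\rangle=\sum_i\langle A_iv,v\rangle$ is a sum of nonnegative terms, so it vanishes iff each $\langle A_iv,v\rangle=0$ iff each $A_iv=0$. I would apply this at each $x\in\cX$ to the matrices $A_i:=(PF_iP+P^\bot)(x)$. These are in $\cH_{q,\succeq}$: on $\ran P(x)$ they act as $(PF_iP)(x)\succeq O$ and on $\ker P(x)=\ran P^\bot(x)$ they act as the identity, which is $\succeq O$; and the two subspaces are $P(x)$-orthogonal so the block decomposition is genuine. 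Summing, $\sum_i A_i=(PSP+mP^\bot)(x)$. Since $mP^\bot(x)$ and $P^\bot(x)$ have the same kernel (namely $\ran P(x)$) and $(PSP+mP^\bot)(x)$ decomposes as $(PSP)(x)\oplus mI$ on $\ran P(x)\oplus\ker P(x)$ while $(PSP+P^\bot)(x)$ decomposes as $(PSP)(x)\oplus I$ there, the two matrices have the same kernel and their determinants vanish simultaneously. Hence
\[
    \ker(PSP+P^\bot)(x)=\ker\Bigl(\sum_{i=1}^m A_i\Bigr)=\bigcap_{i=1}^m\ker A_i=\bigcap_{i=1}^m\ker(PF_iP+P^\bot)(x),
\]
and likewise $\det(PSP+P^\bot)(x)=0$ iff $\det(PF_iP+P^\bot)(x)=0$ for some $i$, i.e.\ iff $\prod_i\det(PF_iP+P^\bot)(x)=0$.

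Finally I would upgrade from the generators $F_i$ to \emph{all} $F\in\cN_+(\Lambda,P)$. For the inclusion $\supseteq$ in \eqref{vlp2} and $\subseteq$ in \eqref{vlp1} this is immediate since each $F_i\in\cN_+(\Lambda,P)$, so the intersection over all of $\cN_+(\Lambda,P)$ is contained in the finite intersection over the $F_i$, which equals the $S$-expression. For the reverse, write an arbitrary $F\in\cN_+(\Lambda,P)$ as $F=\sum_i\lambda_i F_i$ with $\lambda_i\ge 0$; then $(PFP+P^\bot)(x)=\sum_i\lambda_i(PF_iP+P^\bot)(x)+(1-\sum_i\lambda_i)P^\bot(x)$—this last correction term is awkward, so instead I would argue directly: if $v\in\ker(PSP+P^\bot)(x)$ then $v\in\ker(PF_iP+P^\bot)(x)$ for every $i$, hence $P^\bot(x)v=0$ and $(PF_iP)(x)v=0$ for all $i$, therefore $(PFP)(x)v=\sum_i\lambda_i(PF_iP)(x)v=0$ and $(PFP+P^\bot)(x)v=0$; this shows $\ker(PSP+P^\bot)(x)\subseteq\ker(PFP+P^\bot)(x)$ for every $F\in\cN_+(\Lambda,P)$, giving $\ker(PSP+P^\bot)(x)\subseteq\bV_+(\Lambda,P;x)$, and similarly $\{\det(PSP+P^\bot)=0\}\subseteq\{\det(PFP+P^\bot)=0\}$ for every such $F$, hence $\subseteq\cV_+(\Lambda,P)$. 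Combining both inclusions yields \eqref{vlp1} and \eqref{vlp2}.

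The main obstacle I anticipate is the bookkeeping around the projection: one must be careful that $PFP+P^\bot$ really is, pointwise, a nonnegative matrix and that passing from the coefficient $m$ (or a general $\sum\lambda_i$) in front of $P^\bot$ back to coefficient $1$ does not change the kernel or the vanishing locus of the determinant. Working in the orthogonal block decomposition $\dC^q=\ran P(x)\oplus\ker P(x)$ makes all of this transparent, and the direct kernel-chasing argument in the last paragraph sidesteps the coefficient issue entirely; so the only genuinely nontrivial input is the elementary "sum of positive semidefinite matrices has kernel equal to the intersection of kernels" lemma, which is standard.
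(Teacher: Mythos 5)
Your overall strategy --- sum a finite subfamily of $\cN_+(\Lambda,P)$ to obtain $S$, then use the fact that a vector annihilated by a sum of positive semidefinite matrices is annihilated by each summand --- is exactly the paper's strategy, and your pointwise linear algebra (the block decomposition along $\ran P(x)\oplus\ker P(x)$, the kernel lemma for sums of PSD matrices, the irrelevance of the coefficient in front of $P^\bot$) is correct. The gap is in how you choose the finite subfamily. You assert that $\cN_+(\Lambda,P)=\cone\{F_1,\dotsc,F_m\}$ for finitely many $F_i$, justified by an appeal to Carath\'eodory's theorem. This is false in general: a convex cone in a finite-dimensional space need not be finitely generated (polyhedral), and Carath\'eodory's theorem only bounds the number of generators needed to represent a \emph{single} element of $\cone(M)$ --- it does not produce a finite set $M$. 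The cones $\cN_+(\Lambda,P)$ arising here are cut out by semidefiniteness constraints and are typically non-polyhedral; for instance, in Example~\ref{E1114} the cone $\cN_1(\Lambda_L)$ is isomorphic to the cone of positive semidefinite Hermitian $2\times2$ matrices, which has no finite generating set. So as written, the first step of your construction is unjustified.

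The repair is small and is what the paper does: choose $F_1,\dotsc,F_k$ to be a \emph{maximal linearly independent subset} of $\cN_+(\Lambda,P)$ (finite because $\dim\cE<\infty$) and set $S:=F_1+\dotsb+F_k$. Then every $F\in\cN_+(\Lambda,P)$ lies in $\Lin\{F_1,\dotsc,F_k\}$, i.e.\ $F=\sum_j\lambda_jF_j$ with $\lambda_j\in\dR$ not necessarily nonnegative; and your own final ``direct kernel-chasing'' paragraph goes through verbatim with arbitrary real $\lambda_j$, since it only uses that $v$ is annihilated by every $(PF_jP)(x)$ and by $P^\bot(x)$. A secondary slip: your claim that $\det(PSP+P^\bot)(x)=0$ if and only if $\det(PF_iP+P^\bot)(x)=0$ \emph{for some} $i$ fails in the backward direction (a single degenerate $F_{i_0}$ does not force the common kernel to be nontrivial). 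What you actually have, and all you need, is that $\det(PSP+P^\bot)(x)=0$ implies $\det(PFP+P^\bot)(x)=0$ for \emph{every} $F$ in the cone, together with the trivial reverse inclusion coming from $S\in\cN_+(\Lambda,P)$; fortunately your closing paragraph already argues this way, so once the finite-generation claim is replaced by the linear-independence argument the proof is complete.
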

\begin{proof}
In the case $\cN_+(\Lambda,P)=\{O\}$ the assertion is trivial.
Assume now that $\cN_+(\Lambda,P)\neq\{O\}$. Then there exist $F_1,\dotsc,F_k\in\cN_+(\Lambda,P)$, $k\in \dN$, such that $\{F_1,\dotsc,F_k\}$ is a maximal linear independent subset of $\cN_+(\Lambda,P)$.
We prove that $S:=  F_1+\dotsb+F_k$ has the desired properties.

Since $PSP=PF_1P+\dotsb+PF_kP$ and $(PF_jP)(x)\succeq O$ by the definition of $\cN_+(\Lambda,P)$, we have
\begin{equation}\label{P144q.1}
    (PSP)(x)
    \succeq(PF_jP)(x)
    \succeq   O
    \quad\text{for all }j\in\{1,\dotsc,k\}\text{ and }x\in\cX.
\end{equation}
Obviously, $ \Lambda(S)= \Lambda(F_1)+\dotsb+ \Lambda(F_k)=0$.
This shows that $S\in\cN_+(\Lambda,P)$.
By definition, we have\, $\cV_+(\Lambda,P)\subseteq\{\det(PSP+P^\bot)=0\}$ and $\bV_+(\Lambda,P;x)\subseteq\ker(PSP+P^\bot)(x)$ for all $x\in\cX$.

Now  consider an arbitrary $F\in\Lin(\cN_+(\Lambda,P))$.
By the choice of $F_1,\dotsc,F_k$, there exist $\lambda_1,\dotsc,\lambda_k\in\dR$ such that $F=\lambda_1F_1+\dotsb+\lambda_kF_k$.
Consequently, $PFP=\lambda_1(PF_1P)+\dotsb+\lambda_k(PF_kP)$.
Let $x\in\cX$.
For all $j\in\{1,\dotsc,k\}$, from \eqref{P144q.1} we  infer $\ker(PSP)(x)\subseteq\ker(PF_jP)(x)$, so that $\ker(PSP)(x)\subseteq\ker(PFP)(x)$.
Let $v\in\ker(PSP+P^\bot)(x)$.
Then $[(PSP)(x)]v=-[P^\bot(x)]v$, so  $[(PSP)(x)]v=0$ and $[P^\bot(x)]v=0$.
Hence, $[(PFP)(x)]v=0$ and  $[(PFP+P^\bot)(x)]v=0$, i.\,e., $v\in\ker(PFP+P^\bot)(x)$.
Thus,  $\ker(PSP+P^\bot)(x)\subseteq\ker(PFP+P^\bot)(x)$.
Therefore, \[\ker(PSP+P^\bot)(x)\subseteq\bigcap_{F\in\Lin(\cN_+(\Lambda,P))}\ker(PFP+P^\bot)(x)\subseteq\bV_+(\Lambda,P;x).\]

Suppose that $\det(PSP+P^\bot)(x)=0$.
Then $\{0\}\neq\ker(PSP+P^\bot)(x)\subseteq\ker(PFP+P^\bot)(x)$ which implies that $\det(PFP+P^\bot)(x)=0$.
Therefore, $x\in\bigcap_{F\in\Lin(\cN_+(\Lambda,P))}\{\det(PFP+P^\bot)=0\}\subseteq\cV_+(\Lambda,P)$.

Putting the preceding together the equalities \eqref{vlp1} and \eqref{vlp2} follow.
\end{proof}

\section{The core set of a moment functional}\label{coreset}
In this section, $\Lambda$ denotes a moment functional on $\cE$ such that $\Lambda\neq 0$.
Our aim  is to introduce the core set $\cV(\Lambda)$ (see \eqref{1.24}) and derive its basic properties. 

For a linear subspace $U$ of the unitary space $\dC^q$, let $\dP_U$ denote the orthogonal projection onto $U$.

\begin{dfn}\label{D147}
Set $\cV_0(\Lambda):= \cX$ and  $\bV_0(\Lambda;x):=\dC^q$ for all $x\in\cX$.
For $j\in\dN$, we define recursively
\begin{equation}\label{1.22}
    \cN_j(\Lambda)
    := \{F\in\ker\Lambda\colon(P_{j-1}FP_{j-1})(x)\succeq  O\text{ for all }x\in\cV_{j-1}(\Lambda)\}
\end{equation}
and
\begin{gather}
\cV_j(\Lambda)
    :=\bigcap_{F\in\cN_j(\Lambda)}\{x\in\cV_{j-1}(\Lambda)\colon\det(P_{j-1}FP_{j-1}+P_{j-1}^\bot)(x)=0\},\label{1.23}\\   
    \bV_j(\Lambda;x)
    :=\bigcap_{F\in\cN_j(\Lambda)}\ker\, (P_{j-1}FP_{j-1}+P_{j-1}^\bot)(x)\quad\text{for  }x\in\cV_{j-1}(\Lambda),\label{VVj}
\end{gather}
where $P_j\colon\cV_j(\Lambda)\to\cH_q$  is defined by $P_j(x):=\dP_{\bV_j(\Lambda;x)}$ for $j\in \dN_0$.
\end{dfn}

The mapping $P_j$ is well defined, since $\cV_{j}(\Lambda)\subseteq\cV_{j-1}(\Lambda)$, according to \eqref{1.23}.

\begin{dfn}\label{cordef}
The \emph{core set} of $\Lambda$ is 
\begin{equation}\label{1.24}
    \cV( \Lambda)
    := \bigcap_{j=0}^\infty \cV_j(\Lambda).
\end{equation}
Furthermore, we set
\[
    \bV( \Lambda;x)
    :=\bigcap_{j=0}^\infty \bV_j(\Lambda;x)
    \quad \text{for }x\in\cV( \Lambda).%
\]
\end{dfn}
Observe that $\bV( \Lambda;x)$ is well defined, since $\cV_{j}(\Lambda)\subseteq\cV( \Lambda)$.

The next example is the counter-part of Example~\ref{E0635} for  core sets.
It shows that for a matrix moment functional $L$ the core sets of  $\tr L$ and  $\Lambda_L$ are different in general.

\begin{exm}[Example~\ref{E0635} continued]\label{E1114}
We retain all notations from Example~\ref{E0635}.
Recall that $L(f)=e_{11}f(1)+e_{22}f(-1)$ for $f\in  E:= \Lin_\dR \{x^0,x^1,x^2\}$ by \eqref{defLE} and $\Lambda_L(F)=f_{11}(1)+f_{22}(-1)$ for $F=(f_{ij})\in \cH_2(E)$ by \eqref{deflambdaL}.

Clearly, $\ker(L)=\{f\in E\colon f(-1)=f(1)=0\}=\Lin_\dR \{x^2-1\}$ and the space $E_+:=\{f\in E\colon f(x)\geq0~ \text{ for}~ x\in\cX\}$ is
\begin{equation}\label{E+}
    E_+
    =\{a(x-b)^2+c\colon a,c\geq 0, \, b\in\dR\}.
\end{equation}
Set $N_1(L):=\{f\in E_+\colon L(f)=O\}$ and $V_1(L):=\{x\in\cX\colon f(x)=0\;\text{for}\;f\in N_1(L)\}$.
Then we have $N_1(L)=\ker(L)\cap E_+=\{0\}$ and $V_1(L):=\bigcap_{f\in N_1(L)}\{f=0\}=\dR$.

We determine the core set of the scalar (!) moment functional $\tr L$ on $E$.
First we note that $\ker(\tr L)=\{f\in E\colon f(-1)+f(1)=0\}=\Lin_\dR \{x^1,x^2-1\}$.
Therefore, $\cN_1(\tr L)=\ker(\tr L)\cap E_+=\{0\}$ and $\cV_1(\tr L):=\bigcap_{f\in \cN_1(\tr L)}\{f=0\}=\dR$.
Hence $\tr L$ is strictly positive on $E$ and the core set $\cV(\tr L)$ of $\tr L$ is $\dR$.

Next we  show that $\cV_1(\Lambda_L)=\{-1, 1\}$.
Indeed, we have
\[
    \mathcal{H}_2(E)
    =\left\{\left(\begin{smallmatrix}f_{11}&f_{12}-\ii f_{21}\\f_{12}+\ii f_{21}&f_{22}\end{smallmatrix}\right)\colon f_{11},f_{12},f_{21},f_{22}\in E\right\}
\]
and therefore
\begin{equation}\label{H>}
    \mathcal{H}_{2,\succeq}(E)
    =\left\{\begin{pmatrix}f_{11}&f_{12}-\ii f_{21}\\f_{12}+\ii f_{21}&f_{22}\end{pmatrix}\colon\begin{gathered}f_{11},f_{22}\in E_+\text{ and }f_{12},f_{21}\in E\\\text{such that }f_{11}f_{22}\geq f_{12}^2+f_{21}^2\end{gathered}\right\}.
\end{equation}
Using Definition~\ref{D147}, \eqref{H>}, and \eqref{E+}, we  conclude that
\[\begin{split}
    \cN_1(\Lambda_L)
    &=\ker( \Lambda_L)\cap\cH_{2,\succeq}(E)
    =\{F\in\cH_{2,\succeq}(E)\colon f_{11}(1)=0\text{ and }f_{22}(-1)=0\}\\
    &=\left\{F\in\cH_{2,\succeq}(E)\colon F(1)=\left(\begin{smallmatrix}0&0\\0&f_{22}(1)\end{smallmatrix}\right)\text{ and }F(-1)=\left(\begin{smallmatrix}f_{11}(-1)&0\\0&0\end{smallmatrix}\right)\right\}\\
    &=\left\{\begin{pmatrix}a(x-1)^2&\overline{w}(x-1)(x+1)\\w(x-1)(x+1)&d(x+1)^2\end{pmatrix}\colon\begin{gathered} a,d\in[0,\infty)\text{ and }w\in\dC\\\text{such that }ad\geq\lvert w\rvert^2\end{gathered}\right \}
\end{split}\]
and thus 
\[\begin{split}
    \cV_1(\Lambda_L)
    &=\bigcap_{F\in\cN_1(\Lambda_L)}\{\det F=0\}
    =\bigcap_{\substack{a,d\in[0,\infty),\;w\in\dC\\\text{with }ad\geq\lvert w\rvert^2 }}\{(ad-\lvert w\rvert^2)(x-1)^2(x+1)^2=0\}\\
    &=\{-1,1\}=\cW(L)=\cW(\Lambda_L)
\end{split}\]
by \eqref{equWL}.
Since $\cV_j(\Lambda_L)\subseteq \cV_1(\Lambda_L)$ as stated in Lemma~\ref{P148-aP}\ref{P148-aP.b}, $\cV(\Lambda_L)\subseteq \{1,-1\}$.
Thus, the core set $\cV(\tr L)$ is different from the core set $\cV(\Lambda_L)$.
In fact, we have $\cV(\Lambda_L)=\{1,-1\}=\cW(\Lambda_L)$ by Theorem~\ref{T149} below.
\end{exm}

Our next main aim is  Theorem~\ref{P148-c} below.
First we develop a number of preliminary results.

\begin{lem}\label{P148-aP}
\begin{enumerate}[(a)]
    \item\label{P148-aP.a} $\cN_k(\Lambda)\subseteq\cN_{k+1}(\Lambda)$ for  $k\in\dN$.
    \item\label{P148-aP.b} $\cV_{j+1}(\Lambda)\subseteq\cV_j(\Lambda)$  and $\bV_{j+1}(\Lambda;x)\subseteq\bV_j(\Lambda;x)$ for  $x\in\cV_j(\Lambda)$ and  $j\in\dN_0$.
\end{enumerate}
\end{lem}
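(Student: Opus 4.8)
The two assertions are tightly linked and should be proved together by a simultaneous induction on the level $j$; in fact the natural statement to induct on is the conjunction
\[
\cN_{j}(\Lambda)\subseteq\cN_{j+1}(\Lambda),\qquad \cV_{j+1}(\Lambda)\subseteq\cV_{j}(\Lambda),\qquad \bV_{j+1}(\Lambda;x)\subseteq\bV_{j}(\Lambda;x)\text{ for }x\in\cV_{j}(\Lambda).
\]
First I would verify the base step $j=0$, using the conventions $\cV_0(\Lambda)=\cX$, $\bV_0(\Lambda;x)=\dC^q$, hence $P_0(x)=I_q$ and $P_0^\bot(x)=O$. Then $\cN_1(\Lambda)=\{F\in\ker\Lambda\colon F(x)\succeq O\text{ on }\cX\}$, and for such an $F$ one has $P_0FP_0+P_0^\bot=F$, so $\cV_1(\Lambda)=\bigcap_{F\in\cN_1(\Lambda)}\{\det F=0\}\subseteq\cX=\cV_0(\Lambda)$ trivially, and likewise $\bV_1(\Lambda;x)\subseteq\dC^q=\bV_0(\Lambda;x)$ trivially. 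For $\cN_0\subseteq\cN_1$: since $\cN_0(\Lambda)$ is not defined, part~\ref{P148-aP.a} really starts at $k=1$, so actually the inductive engine is: having $\cV_j\subseteq\cV_{j-1}$ and $\bV_j\subseteq\bV_{j-1}$ available, deduce $\cN_j\subseteq\cN_{j+1}$, and from that deduce $\cV_{j+1}\subseteq\cV_j$ and $\bV_{j+1}\subseteq\bV_j$.

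The key observation driving part~\ref{P148-aP.a} is a comparison between consecutive projections. Suppose $F\in\cN_j(\Lambda)$, i.e. $F\in\ker\Lambda$ and $(P_{j-1}FP_{j-1})(x)\succeq O$ for $x\in\cV_{j-1}(\Lambda)$. I want $(P_jFP_j)(x)\succeq O$ for $x\in\cV_j(\Lambda)$. Fix $x\in\cV_j(\Lambda)\subseteq\cV_{j-1}(\Lambda)$. By definition $\bV_j(\Lambda;x)=\bigcap_{G\in\cN_j(\Lambda)}\ker(P_{j-1}GP_{j-1}+P_{j-1}^\bot)(x)$, so for every $v\in\bV_j(\Lambda;x)=\ran P_j(x)$ we have $(P_{j-1}FP_{j-1})(x)v=-P_{j-1}^\bot(x)v$; but the left side lies in $\ran P_{j-1}(x)$ and the right side in $\ker P_{j-1}(x)$, forcing both to vanish — in particular $P_{j-1}^\bot(x)v=0$, i.e. $v\in\ran P_{j-1}(x)$, which re-proves $\bV_j(\Lambda;x)\subseteq\bV_{j-1}(\Lambda;x)$, and moreover $(P_{j-1}FP_{j-1})(x)v=0$ for all $v\in\ran P_j(x)$. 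Thus $P_j(x)=P_j(x)P_{j-1}(x)=P_{j-1}(x)P_j(x)$, and for any $v\in\dC^q$,
\[
v^{*}(P_jFP_j)(x)v = \bigl(P_j(x)v\bigr)^{*}F(x)\bigl(P_j(x)v\bigr) = \bigl(P_j(x)v\bigr)^{*}\bigl(P_{j-1}FP_{j-1}\bigr)(x)\bigl(P_j(x)v\bigr)\ge 0,
\]
using $P_j\le P_{j-1}$ and the positivity of $(P_{j-1}FP_{j-1})(x)$. Hence $F\in\cN_{j+1}(\Lambda)$, proving \ref{P148-aP.a}.

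For part~\ref{P148-aP.b} at level $j$: the inclusion $\cV_{j+1}(\Lambda)\subseteq\cV_j(\Lambda)$ is immediate from the definition \eqref{1.23}, since $\cV_{j+1}(\Lambda)$ is an intersection of subsets of $\cV_j(\Lambda)$ (this requires only that $\cN_{j+1}(\Lambda)$ be nonempty, which holds because $O\in\cN_{j+1}(\Lambda)$ — indeed $O\in\ker\Lambda$ and $(P_jOP_j)(x)=O\succeq O$). The inclusion $\bV_{j+1}(\Lambda;x)\subseteq\bV_j(\Lambda;x)$ for $x\in\cV_j(\Lambda)$ is the point already established in the paragraph above: any $v\in\ker(P_jFP_j+P_j^\bot)(x)$ for all $F\in\cN_{j+1}(\Lambda)$ — in particular, taking $F=O$ — satisfies $P_j^\bot(x)v=0$, so $v\in\ran P_j(x)=\bV_j(\Lambda;x)$. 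I expect the main obstacle to be the bookkeeping in the key observation $P_j(x)\le P_{j-1}(x)$ and the resulting factorization $(P_jFP_j)(x)=(P_jP_{j-1}FP_{j-1}P_j)(x)$ — one must be careful that membership of $x$ in $\cV_j(\Lambda)$ (not merely $\cV_{j-1}(\Lambda)$) is what licenses evaluating $P_j(x)$ and applying the kernel description \eqref{VVj}; once that is pinned down, everything else is a short computation with projections and the Loewner order.
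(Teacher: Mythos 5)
Your proof is correct and follows essentially the same route as the paper's: part~(b) is read off from the definitions (using that $\ker(P_jFP_j+P_j^\bot)(x)\subseteq\ker P_j^\bot(x)$ because the two summands have orthogonal ranges), and part~(a) rests on the projection nesting $P_jP_{j-1}=P_{j-1}P_j=P_j$, which gives $(P_jFP_j)(x)=P_j(x)\,(P_{j-1}FP_{j-1})(x)\,P_j(x)\succeq O$ since compressions preserve positivity. The paper packages~(a) as a formal induction on the level, but the inductive hypothesis plays no real role there, so your direct argument is the same in substance.
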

\begin{proof}
\ref{P148-aP.b}:
Let $j\in\dN_0$.
By construction,  $\cV_{j+1}(\Lambda)\subseteq\cV_j(\Lambda)$.
Let $x\in\cV_j(\Lambda)$ and $v\in\bV_{j+1}(\Lambda;x)$.
From \eqref{VVj} we  infer $v\in\ker P_j^\bot(x)$ which implies $v\in\bV_{j}(\Lambda;x)$.

\ref{P148-aP.a}:
Recall that $\cV_0(\Lambda)=\cX$ and $\bV_0(\Lambda;x)=\dC^q$.
Hence $P_0(x)=I_q$ for  $x\in\cX$.
Let $F\in\cN_{1}(\Lambda)$.
Then $F\in\ker\Lambda$ and $F(x)=(P_{0}FP_{0})(x)\succeq  O$ for $x\in\cV_{0}(\Lambda)=\cX$.
In particular, for $x\in\cV_1(\Lambda)$, we have $F(x)\succeq  O$ and hence $(P_1FP_1)(x)\succeq  O$.
Thus, $F\in\cN_{2}(\Lambda)$.
This proves that $\cN_{1}(\Lambda)\subseteq\cN_{2}(\Lambda)$.

We proceed by induction. Suppose  $\cN_{\ell}(\Lambda)\subseteq\cN_{\ell+1}(\Lambda)$ is already proved for some $\ell\in\dN$.
Let $F\in\cN_{\ell+1}(\Lambda)$.
Then $F\in\ker\Lambda$ and $(P_{\ell}FP_{\ell})(x)\succeq  O$ for all $x\in\cV_{\ell}(\Lambda)$.
From $\cV_{\ell+1}(\Lambda)\subseteq\cV_{\ell}(\Lambda)$ we obtain $(P_{\ell}FP_{\ell})(x)\succeq  O$ for all $x\in\cV_{\ell+1}(\Lambda)$.
Since $\bV_{\ell+1}(\Lambda;x)\subseteq\bV_{\ell}(\Lambda;x)$ for $x\in\cV_{\ell+1}(\Lambda)$ by~\ref{P148-aP.b}, we have $P_{\ell+1}P_\ell=P_\ell P_{\ell+1}=P_{\ell+1}$ which implies $P_{\ell+1}(P_\ell FP_\ell)P_{\ell+1}=P_{\ell+1}FP_{\ell+1}$.
Therefore, $(P_{\ell+1}FP_{\ell+1})(x)\succeq  O$ for all $x\in\cV_{\ell+1}(\Lambda)$.
Thus, $F\in\cN_{\ell+2}(\Lambda)$.
This proves that $\cN_{\ell+1}(\Lambda)\subseteq\cN_{\ell+2}(\Lambda)$.
\end{proof}

\begin{lem}\label{L1841P}
Let $\mu\in\cM_\Lambda$,  $j\in\dN_0$, and  $\cX_j:= \cV_j(\Lambda)$.
Suppose  $\cX_j\in\fX\setminus\{\emptyset\}$,  $\mu(\cX\setminus\cX_j)=O$, and $\ran (\mu(\{x\}))\subseteq\bV_j(\Lambda;x)$ for all $x\in\cX_j$.
Further, suppose that $P_j\in\cM(\cX_j,\fX_j;\cH_q)$, where $\fX_j:=\cX_j\cap\fX$.
Then:
\begin{enumerate}[(a)]
    \item\label{L1841P.a} $\mu_j:= \mu\lceil{\fX_j}$ belongs to $\cM_q(\cX_j,\fX_j)$ and satisfies $\ats (\mu_j)=\ats (\mu)$.
    \item\label{L1841P.b} Set  $F_j := F\lceil{\cX_j}$ for $F\in\cE$.
    Then $ \cE_j := \{P_jF_jP_j\colon F\in\cE\}$ is a linear subspace of $L^1(\mu_j;\cH_q)$ with $\dim\cE_j \leq\dim\cE$ and $\Lambda(F)=\Lambda^{\mu_j}(P_jF_jP_j)$ for  $F\in\cE$.
    \item\label{L1841P.c} Let $ \Lambda_j:=\Lambda^{\mu_j}$.
    Then $ \Lambda_j \in\fL(\cE_j)$ and $\mu_j\in\cM_{\Lambda_j}$.
    \item\label{L1841P.d} $\cN_+(\Lambda_j,P_j)=\{P_jF_jP_j\colon F\in\cN_{j+1}(\Lambda)\}$,~  $\cV_+(\Lambda_j,P_j)=\cV_{j+1}(\Lambda)$, and $\bV_+(\Lambda_j,P_j;x)=\bV_{j+1}(\Lambda;x)$ for all $x\in\cX_j$.
\end{enumerate}
\end{lem}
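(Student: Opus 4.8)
The plan is to prove the four assertions in the order listed: (a) is a restriction property of positive $\cH_q$-valued measures, (b) is the computational core, (c) is then immediate from (b) and Definition~\ref{defmf}, and (d) comes from unwinding Definitions~\ref{D139+} and~\ref{D147}. For (a): since $\cX_j=\cV_j(\Lambda)\in\fX$, the system $\fX_j=\cX_j\cap\fX$ is a sub-$\sigma$-algebra of $\fX$ on $\cX_j$ with $\{x\}\in\fX_j$ for all $x\in\cX_j$, so $(\cX_j,\fX_j)$ is again a measurable space of the type considered here and $\mu_j=\mu\lceil\fX_j$ belongs to $\cM_q(\cX_j,\fX_j)$. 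For $x\in\cX_j$ we have $\mu_j(\{x\})=\mu(\{x\})$; and if $x\notin\cX_j$ then $\{x\}\subseteq\cX\setminus\cX_j$, so $O\preceq\mu(\{x\})\preceq\mu(\cX\setminus\cX_j)=O$, whence $\ats(\mu)\subseteq\cX_j$. Combining, $\ats(\mu_j)=\{x\in\cX_j\colon\mu(\{x\})\neq O\}=\ats(\mu)$.

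For (b) I would first record that $\tau_j:=\tau\lceil\fX_j$ is the trace measure of $\mu_j$ and $\rnd_j:=\rnd\lceil\cX_j$ is its Radon--Nikodym derivative with respect to $\tau_j$ in the sense of our convention (the restriction of the Hermitian PSD version $\rnd$ is again Hermitian and PSD on $\cX_j$, and for $X\in\fX_j$ one has $\mu_j(X)=\mu(X)=\int_X\rnd_j\dif\tau_j$). The crucial point is that the hypothesis $\ran(\mu(\{x\}))\subseteq\bV_j(\Lambda;x)$ forces $P_j(x)\rnd_j(x)P_j(x)=\rnd_j(x)$ for $\tau_j$-a.e.\ $x\in\cX_j$ --- by the same argument as in the proofs of Lemma~\ref{P123P} and Corollary~\ref{C131P}, using $\mu_j(\{x\})=\rnd_j(x)\tau_j(\{x\})$. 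Next, $F\mapsto P_jF_jP_j$ is a linear map from $\cE$ into $\cM(\cX_j,\fX_j;\cH_q)$: $F_j$ is $\fX_j$-measurable as a restriction of the $\fX$-measurable $F$, $P_j$ is $\fX_j$-measurable by hypothesis, and $P_jGP_j\in\cH_q$ for $G\in\cH_q$; hence $\cE_j$ is a linear subspace with $\dim\cE_j\leq\dim\cE$. Using $P_j=P_j^*$ and the a.e.\ identity, $\langle P_jF_jP_j,\rnd_j\rangle=\langle F_j,P_j\rnd_jP_j\rangle=\langle F_j,\rnd_j\rangle=\langle F,\rnd\rangle\lceil\cX_j$ $\tau_j$-a.e., and this last function lies in $L^1(\tau_j;\dR)$ because $F\in\cE\subseteq L^1(\mu;\cH_q)$; so $P_jF_jP_j\in L^1(\mu_j;\cH_q)$. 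Finally $\tau(\cX\setminus\cX_j)=\tr(\mu(\cX\setminus\cX_j))=0$, so $\Lambda(F)=\int_\cX\langle F,\rnd\rangle\dif\tau=\int_{\cX_j}\langle P_jF_jP_j,\rnd_j\rangle\dif\tau_j=\Lambda^{\mu_j}(P_jF_jP_j)$, which is (b). Assertion (c) is then immediate: since $\cE_j\subseteq L^1(\mu_j;\cH_q)$ by (b), the functional $\Lambda_j=\Lambda^{\mu_j}$ is by Definition~\ref{defmf} a moment functional on $\cE_j$, i.e.\ $\Lambda_j\in\fL(\cE_j)$, and $\mu_j\in\cM_{\Lambda_j}$ by construction.

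For (d) I would apply Definition~\ref{D139+} to $\Lambda_j$ and $P_j$ on $(\cX_j,\fX_j)$ and compare term by term with Definition~\ref{D147}, recalling $\cV_j(\Lambda)=\cX_j$. For $G=P_jF_jP_j\in\cE_j$ we have $P_jGP_j=G$, so "$(P_jGP_j)(x)\succeq O$ for $x\in\cX_j$" is the same as "$(P_jFP_j)(x)\succeq O$ for $x\in\cX_j$", while by (b) $\Lambda_j(G)=\Lambda(F)$; hence $G\in\cN_+(\Lambda_j,P_j)$ exactly when $F\in\cN_{j+1}(\Lambda)$, which gives $\cN_+(\Lambda_j,P_j)=\{P_jF_jP_j\colon F\in\cN_{j+1}(\Lambda)\}$. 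For such $G$ one also has $(P_jGP_j+P_j^\bot)(x)=(P_jFP_j+P_j^\bot)(x)$ on $\cX_j$, so the families of functions $\{P_jGP_j+P_j^\bot\colon G\in\cN_+(\Lambda_j,P_j)\}$ and $\{P_jFP_j+P_j^\bot\colon F\in\cN_{j+1}(\Lambda)\}$ coincide; intersecting over these families the respective zero sets of the determinant and the kernels yields $\cV_+(\Lambda_j,P_j)=\cV_{j+1}(\Lambda)$ and $\bV_+(\Lambda_j,P_j;x)=\bV_{j+1}(\Lambda;x)$ for $x\in\cX_j$. The one genuinely delicate step is the a.e.\ identity $P_j\rnd_jP_j=\rnd_j$ underlying (b); after that the proof is bookkeeping, but one must check that restriction commutes with forming trace measures and Radon--Nikodym derivatives, that all three factors of $P_jF_jP_j$ are $\fX_j$-measurable, and --- for (d) --- that distinct $F,F'\in\cE$ with $P_jF_jP_j=P_jF'_jP_j$ give the same value $\Lambda(F)=\Lambda(F')$, which is exactly what (b) guarantees.
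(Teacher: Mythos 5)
Your proof follows essentially the same route as the paper's: restrict $\mu$, $\tau$ and $\rnd$ to $(\cX_j,\fX_j)$, use the identity $P_j\rnd_jP_j=\rnd_j$ forced by $\ran(\mu(\{x\}))\subseteq\bV_j(\Lambda;x)$ to obtain the chain of integral equalities in (b), and then compare Definitions~\ref{D139+} and~\ref{D147} term by term for (d), noting $P_jGP_j=G$ for $G\in\cE_j$. The one place where you are more cautious than the paper --- asserting $P_j\rnd_jP_j=\rnd_j$ only $\tau_j$-a.e.\ --- changes nothing, since that identity is only used under an integral.
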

\begin{proof}
First we observe that the assumptions imply that $\fX_j$ is a sub-$\sigma$-algebra of $\fX$ on $\cX_j$, so that $(\cX_j,\fX_j)$ is a non-trivial measurable space.

\ref{L1841P.a}:
Clearly, $\mu_j$ is a well-defined measure of $\cM_q(\cX_j,\fX_j)$.
By $\mu(\cX\setminus\cX_j)=O$, we have $\ats (\mu_j)=\ats (\mu)$.

\ref{L1841P.b}:
Consider an arbitrary $F\in\cE$.
Then $F\in L^1(\mu;\cH_q)$.
Let $\rnd $ be  the Radon--Nikodym derivative of $\mu$ with respect to its trace measure $\tau$.
By ~\ref{L1841P.a},  $\tau_j:=\tau\lceil{\fX_j}$ is the trace measure of $\mu_j$ and $\rnd_j:=\rnd \lceil{\cX_j}$ is  the Radon--Nikodym derivative of $\mu_j$ with respect to $\tau_j$.
Because of $\ran (\mu(\{x\}))\subseteq\bV_j(\Lambda;x)$ for $x\in\cX_j$, we have $P_j\rnd_jP_j=\rnd_j$.
Since $\mu\in\cM_\Lambda$ and $\mu(\cX\setminus\cX_j)=O$, then
\[\begin{split}
    \Lambda(F)
    &=\Lambda^\mu(F)
    =\int_{\cX }\langle F,\rnd \rangle\dif\tau
    =\int_{\cX_j}\langle F,\rnd \rangle\dif\tau
    =\int_{\cX_j}\langle F_j,\rnd_j\rangle\dif\tau_j\\
    &=\int_{\cX_j}\langle F_j,P_j\rnd_jP_j\rangle\dif\tau_j
    =\int_{\cX_j}\langle P_jF_jP_j,\rnd_j\rangle\dif\tau_j
    =\Lambda^{\mu_j}(P_jF_jP_j).
\end{split}\]
In particular, $P_jF_jP_j\in L^1(\mu_j;\cH_q)$.
Hence $\cE_j$ is a linear subspace of $L^1(\mu_j;\cH_q)$.
We have $\dim \cE_j\leq \dim\cE$, since each basis of $\cE$ gives rise to a spanning set of $\cE_j$.

\ref{L1841P.c} is obvious from~\ref{L1841P.a} and~\ref{L1841P.b}.

\ref{L1841P.d}:
Let $G\in\cN_+(\Lambda_j,P_j)$.
Then $G\in\ker\Lambda_j$ and $(P_jGP_j)(x)\succeq O$ for all $x\in\cX_j$.
According to ~\ref{L1841P.b}, there exists $F\in\cE$ with $G=P_jF_jP_j$ and furthermore $\Lambda(F)= \Lambda_j(P_jF_jP_j)= \Lambda_j(G)=0$.
Consequently, $P_jGP_j=P_j^2F_jP_j^2=P_jF_jP_j$ and $F\in\ker\Lambda$.
For  $x\in\cX_j$, we have $(P_jFP_j)(x)=(P_jF_jP_j)(x)=(P_jGP_j)(x)$.
In particular, $(P_jFP_j)(x)\succeq O$ for  $x\in\cX_j=\cV_j(\Lambda)$.
Therefore, $F\in\cN_{j+1}(\Lambda)$.
Now consider let $F\in\cN_{j+1}(\Lambda)$.
Then $F\in\ker\Lambda$ and $(P_jFP_j)(x)\succeq O$ for all $x\in\cV_j(\Lambda)$.
In particular, $F\in\cE$ and $\Lambda(F)=0$.
By \ref{L1841P.b}, $G:=P_jF_jP_j$ belongs to $\cE_j $ and fulfills $\Lambda_j(G)=\Lambda_j(P_jF_jP_j)= \Lambda(F)=0$.
Hence $G\in\ker\Lambda_j$.
Since $(P_jFP_j)(x)\succeq O$ for $x\in\cV_j(\Lambda)$, we have $(P_jGP_j)(x)=(P_j^2F_jP_j^2)(x)=(P_jF_jP_j)(x)=(P_jFP_j)(x)\succeq   O $ for $x\in\cX_j$.
Thus, $G\in\cN_+(\Lambda_j,P_j)$.
We have shown that 
\begin{equation}\label{L1841P.1}
    \cN_+(\Lambda_j,P_j)
    =\{P_jF_jP_j\colon F\in\cN_{j+1}(\Lambda)\}.
\end{equation}
Taking  into account \eqref{VggP}, \eqref{L1841P.1}, and \eqref{1.23}, we derive
\[\begin{split}
    \cV_+(\Lambda_j,P_j)
    &=\bigcap_{G\in\cN_+(\Lambda_j,P_j)}\{x\in\cX_j\colon\det(P_jGP_j+P_j^\bot)(x)=0\}\\
    &=\bigcap_{F\in\cN_{j+1}(\Lambda)}\{x\in\cX_j\colon\det(P_j^2F_jP_j^2+P_j^\bot)(x)=0\}\\
    &=\bigcap_{F\in\cN_{j+1}(\Lambda)}\{x\in\cX_j\colon\det(P_jF_jP_j+P_j^\bot)(x)=0\}\\
    &=\bigcap_{F\in\cN_{j+1}(\Lambda)}\{x\in\cX_j\colon\det(P_jFP_j+P_j^\bot)(x)=0\}
    =\cV_{j+1}(\Lambda).
\end{split}\]

Now let $x\in\cX_j$.
Using  \eqref{VV+P}, \eqref{L1841P.1}, and \eqref{VVj}, we deduce
\[\begin{split}
    \bV_+(\Lambda_j,P_j;x)
    &=\bigcap_{G\in\cN_+(\Lambda_j,P_j)}\ker(P_jGP_j+P_j^\bot)(x)\\
    &=\bigcap_{F\in\cN_{j+1}(\Lambda)}\ker(P_j^2F_jP_j^2+P_j^\bot)(x)\\
    &=\bigcap_{F\in\cN_{j+1}(\Lambda)}\ker(P_jF_jP_j+P_j^\bot)(x)\\
    &=\bigcap_{F\in\cN_{j+1}(\Lambda)}\ker(P_jFP_j+P_j^\bot)(x)
    =\bV_{j+1}(\Lambda;x).\qedhere
\end{split}\]
\end{proof}

Let $M_{p,q}(\dC)$ denote the complex $p\times q$-matrices.
For each matrix $A\in M_{p,q}(\dC)$, there exists a unique matrix $X\in M_{q,p}(\dC)$, satisfying the equations (see,  e.\,g., \cite[Ch.~1]{MR1987382})
\begin{align*}%
    AXA&=A,&
    XAX&=X&
    &\text{and}&
    (AX)^*&=AX,&
    (XA)^*&=XA.
\end{align*}
This matrix $X$ is called the \emph{Moore--Penrose inverse of $A$} and  denoted by $A^\dagger$.

\begin{rem}[see, e.\,g., {\cite[Ex.~2.58, p.~80]{MR1987382}}]\label{L2022}
If $A\in M_{p,q}(\dC)$, then $A^\dagger A=\dP_{\ran(A^*)}$.
\end{rem}

\begin{rem}[see, e.\,g., {\cite[Ex.~3.25, p.~115]{MR1987382}}]\label{L1336}
If $A\in M_{p,q}(\dC)$, then $A^\dagger=\lim_{n\to\infty}(A^*A+\frac{1}{n}I_q)^{-1}A^*$.
\end{rem}

\begin{lem}\label{L1228}
The mapping $\Pi\colon\cH_q\to\cH_q$ defined by $\Pi(A):=\dP_{\ker A}$ is measurable.
\end{lem}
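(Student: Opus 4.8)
The plan is to write $\Pi(A)$ in closed form via the Moore--Penrose inverse and then present it as a pointwise limit of continuous mappings. The starting observation is that, since $A\in\cH_q$ is Hermitian, $A^*=A$, so $\ker A=(\ran A^*)^\bot=(\ran A)^\bot$ and therefore
\[
    \Pi(A)=\dP_{\ker A}=I_q-\dP_{\ran A}.
\]
By Remark~\ref{L2022}, applied with $A^*=A$, we have $\dP_{\ran A}=\dP_{\ran(A^*)}=A^\dagger A$, so that
\[
    \Pi(A)=I_q-A^\dagger A\qquad\text{for }A\in\cH_q.
\]
Since $A^\dagger A=\dP_{\ran A}$ and $I_q$ are Hermitian, this also confirms that $\Pi$ indeed maps $\cH_q$ into $\cH_q$.

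Next I would invoke the approximation in Remark~\ref{L1336}: for $A\in\cH_q$ one has $A^*A=A^2$ and $A^*=A$, hence
\[
    A^\dagger=\lim_{n\to\infty}\Bigl(A^2+\tfrac1n I_q\Bigr)^{-1}A.
\]
For each fixed $n\in\dN$ the matrix $A^2+\tfrac1n I_q$ is positive definite, hence invertible, for \emph{every} $A\in\cH_q$; since matrix addition, multiplication, and inversion (on the open set of invertible matrices) are continuous, the mapping $A\mapsto\Psi_n(A):=(A^2+\tfrac1n I_q)^{-1}A$ is continuous on $\cH_q$, and in particular Borel measurable. A pointwise limit of measurable mappings with values in the finite-dimensional space $M_q(\dC)$ is measurable, so $A\mapsto A^\dagger=\lim_{n\to\infty}\Psi_n(A)$ is measurable. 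Consequently $A\mapsto A^\dagger A$ is measurable (post-compose the measurable map $A\mapsto(A^\dagger,A)$ with the continuous multiplication $(X,A)\mapsto XA$), and therefore $\Pi(A)=I_q-A^\dagger A$ is measurable.

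There is no essential obstacle here beyond bookkeeping; the only points that require attention are the specializations $A^*=A$ and $A^*A=A^2$ that make Remarks~\ref{L2022} and~\ref{L1336} directly applicable, the check that every matrix occurring along the way remains Hermitian, and the continuity of each $\Psi_n$ (which uses only that $A^2+\tfrac1n I_q$ is invertible for all $A\in\cH_q$). An alternative route would be to stratify $\cH_q$ by rank into the Borel pieces $\{A\colon\rank A=r\}$ and to argue that $\Pi$ is continuous on each stratum, but the Moore--Penrose limit formula sidesteps the subtlety that spectral projections fail to be continuous in $A$ across eigenvalue collisions, and so is the cleaner approach.
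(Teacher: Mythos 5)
Your proof is correct and follows essentially the same route as the paper: both express $\Pi(A)=I_q-A^\dagger A$ via Remark~\ref{L2022} and then realize $A^\dagger A$ as the pointwise limit of the continuous (hence measurable) maps $A\mapsto(A^*A+\tfrac1n I_q)^{-1}A^*A$ using Remark~\ref{L1336}. The only cosmetic difference is that you pass to the limit in $A^\dagger$ first and then multiply by $A$, whereas the paper takes the limit of $\Pi_n(A):=I_q-(A^*A+\tfrac1n I_q)^{-1}A^*A$ directly.
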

\begin{proof}
The mapping $\Pi_n\colon\cH_q\to\cH_q$ defined by $\Pi_n(A):=I_q-(A^*A+\frac{1}{n}I_q)^{-1}A^*A$ is measurable for each $n\in \dN$.
Using Remarks~\ref{L2022} and~\ref{L1336},  for all $A\in\cH_q$ we infer
\[
    \Pi(A)
    =\dP_{[\ran(A^*)]^\bot}
    =I_q-A^\dagger A
    =I_q-[\lim_{n\to\infty}(A^*A+\frac{1}{n}I_q)^{-1}A^*]A
    =\lim_{n\to\infty}\Pi_n(A),
\]
so  $\Pi$ is a pointwise limit of measurable mappings.
Hence $\Pi$ is measurable as well.
\end{proof}

\begin{lem}\label{L1402}
Let $F\in\cM(\cX,\mathfrak{X};\cH_q)$.
Then $P\colon\cX\to\cH_q$ defined by $P(x):=\dP_{\ker F(x)}$ belongs to $\cM(\cX,\mathfrak{X};\cH_q)$.
\end{lem}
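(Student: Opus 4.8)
The plan is to realize $P$ as a composition of two measurable mappings and invoke Lemma~\ref{L1228}. Recall that $\cM(\cX,\mathfrak{X};\cH_q)$ consists of those mappings $\cX\to\cH_q$ that are measurable when $\cH_q$ is equipped with its Borel $\sigma$-algebra $\mathcal{B}(\cH_q)$ (a natural choice since $\cH_q$ is a finite-dimensional real Hilbert space). By Lemma~\ref{L1228}, the map $\Pi\colon\cH_q\to\cH_q$ defined by $\Pi(A):=\dP_{\ker A}$ is measurable, i.e.\ $(\mathcal{B}(\cH_q),\mathcal{B}(\cH_q))$-measurable.

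First I would note that $F\in\cM(\cX,\mathfrak{X};\cH_q)$ means precisely that $F$ is $(\mathfrak{X},\mathcal{B}(\cH_q))$-measurable. Since the composition of a $(\mathfrak{X},\mathcal{B}(\cH_q))$-measurable map with a $(\mathcal{B}(\cH_q),\mathcal{B}(\cH_q))$-measurable map is again $(\mathfrak{X},\mathcal{B}(\cH_q))$-measurable, the mapping $\Pi\circ F\colon\cX\to\cH_q$ belongs to $\cM(\cX,\mathfrak{X};\cH_q)$.

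Finally I would observe that for every $x\in\cX$ we have $(\Pi\circ F)(x)=\Pi(F(x))=\dP_{\ker F(x)}=P(x)$, so that $P=\Pi\circ F$, and hence $P\in\cM(\cX,\mathfrak{X};\cH_q)$. There is essentially no obstacle here: all the genuine work---showing that $A\mapsto\dP_{\ker A}$ is measurable by writing it as the pointwise limit of the explicitly measurable maps $\Pi_n$ built from the Moore--Penrose approximation in Remark~\ref{L1336}---has already been carried out in Lemma~\ref{L1228}, and the present statement is just the transfer of that fact along the measurable map $F$.
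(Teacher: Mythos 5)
Your proof is correct and follows exactly the paper's argument: the paper's proof is the one-line observation that $P=\Pi\circ F$ and then an appeal to Lemma~\ref{L1228}. You have merely spelled out the standard fact that a composition of measurable maps is measurable, which the paper leaves implicit.
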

\begin{proof}
The assertion follows from Lemma~\ref{L1228}, since $P=\Pi\circ F$.
\end{proof}

\begin{lem}\label{L1925}
Let $\mu\in\cM^\mathrm{fa}_\Lambda$.
For $j\in\dN_0$, set $\cX_j:=\cV_j(\Lambda)$ and  $\fX_j:=\cX_j\cap\fX$.
Then, for  $j\in\dN_0$, we have:
\begin{enumerate}[(a)]
    \item\label{L1925.a} $\ats (\mu)\subseteq\cX_j$,
    \item\label{L1925.b} $\ran (\mu(\{x\}))\subseteq\bV_j(\Lambda;x)$ for all $x\in\cX_j$,
    \item\label{L1925.c} $\cX_j\in\fX\setminus\{\emptyset\}$ and $\fX_j$ is a sub-$\sigma$-algebra of $\fX$ on $\cX_j$,
    \item\label{L1925.d} $(\cX_j,\fX_j)$ is a non-trivial measurable space and $P_j\in\cM(\cX_j,\fX_j;\cH_q)$.
\end{enumerate}
\end{lem}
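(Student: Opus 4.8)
The plan is to prove \ref{L1925.a}--\ref{L1925.d} simultaneously by induction on $j\in\dN_0$. For $j=0$ everything is immediate: $\cV_0(\Lambda)=\cX$, $\bV_0(\Lambda;x)=\dC^q$ and $P_0\equiv I_q$, so (a) and (b) hold, $(\cX,\fX)$ is the given non-trivial measurable space with all singletons in $\fX$, and the constant mapping $P_0$ is measurable. I also record once and for all that, since $\Lambda\neq0$ and $\mu\in\cM_\Lambda$, the measure $\mu$ is non-zero; being finitely atomic it therefore has at least one atom, so $\ats(\mu)\neq\emptyset$. This fact will be used to show that each $\cV_j(\Lambda)$ is non-empty.

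For the inductive step I assume \ref{L1925.a}--\ref{L1925.d} at level $j$ and write $\cX_j:=\cV_j(\Lambda)$, $\fX_j:=\cX_j\cap\fX$. The first observation is that the induction hypothesis supplies exactly the hypotheses of Lemma~\ref{L1841P}: $\cX_j\in\fX\setminus\{\emptyset\}$ by \ref{L1925.c}; $\mu(\cX\setminus\cX_j)=O$ because $\ats(\mu)\subseteq\cX_j$ by \ref{L1925.a} and $\mu$ is finitely atomic; $\ran(\mu(\{x\}))\subseteq\bV_j(\Lambda;x)$ for $x\in\cX_j$ by \ref{L1925.b}; and $P_j\in\cM(\cX_j,\fX_j;\cH_q)$ by \ref{L1925.d}. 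Lemma~\ref{L1841P} then yields the measure $\mu_j=\mu\lceil\fX_j\in\cM_q(\cX_j,\fX_j)$ with $\ats(\mu_j)=\ats(\mu)$ (and $\mu_j(\{x\})=\mu(\{x\})$ for $x\in\cX_j$, since $\{x\}\in\fX_j$ there), a finite-dimensional subspace $\cE_j\subseteq L^1(\mu_j;\cH_q)$, the moment functional $\Lambda_j:=\Lambda^{\mu_j}\in\fL(\cE_j)$ with $\mu_j\in\cM_{\Lambda_j}$, and the identities $\cV_+(\Lambda_j,P_j)=\cV_{j+1}(\Lambda)$ and $\bV_+(\Lambda_j,P_j;x)=\bV_{j+1}(\Lambda;x)$ for $x\in\cX_j$.

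Next I apply Proposition~\ref{P140iP} on the measurable space $(\cX_j,\fX_j)$ to $\Lambda_j$, the measure $\mu_j\in\cM_{\Lambda_j}$, and the subspaces $\bV_j(\Lambda;x)$ with projections $P_j(x)$; the required containment $\ran(\mu_j(\{x\}))\subseteq\bV_j(\Lambda;x)$ is \ref{L1925.b} of the induction hypothesis. This gives $\ats(\mu_j)\subseteq\cV_+(\Lambda_j,P_j)$ and $\ran(\mu_j(\{x\}))\subseteq\bV_+(\Lambda_j,P_j;x)$ for $x\in\cX_j$. Rewriting via the identities from Lemma~\ref{L1841P} together with $\ats(\mu_j)=\ats(\mu)$ and $\mu_j(\{x\})=\mu(\{x\})$ yields $\ats(\mu)\subseteq\cV_{j+1}(\Lambda)$, which is \ref{L1925.a} at level $j+1$, and, since $\cV_{j+1}(\Lambda)\subseteq\cV_j(\Lambda)=\cX_j$ (Lemma~\ref{P148-aP}), also $\ran(\mu(\{x\}))\subseteq\bV_{j+1}(\Lambda;x)$ for all $x\in\cV_{j+1}(\Lambda)$, which is \ref{L1925.b} at level $j+1$.

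The remaining point, \ref{L1925.c} and \ref{L1925.d} at level $j+1$, is the only delicate one, because $\cV_{j+1}(\Lambda)$ is a priori an intersection indexed by the possibly uncountable set $\cN_{j+1}(\Lambda)$. Here I invoke Proposition~\ref{P144P} for $\Lambda_j$ and $P_j$ on $(\cX_j,\fX_j)$: it produces a single $S\in\cN_+(\Lambda_j,P_j)$ with $\cV_+(\Lambda_j,P_j)=\{x\in\cX_j\colon\det(P_jSP_j+P_j^\bot)(x)=0\}$ and $\bV_+(\Lambda_j,P_j;x)=\ker(P_jSP_j+P_j^\bot)(x)$ for $x\in\cX_j$. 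Since $S\in\cE_j\subseteq L^1(\mu_j;\cH_q)$ is measurable and $P_j$ is measurable, the $\cH_q$-valued mapping $G:=P_jSP_j+P_j^\bot$ lies in $\cM(\cX_j,\fX_j;\cH_q)$, hence $x\mapsto\det G(x)$ is measurable and $\cV_{j+1}(\Lambda)=\cV_+(\Lambda_j,P_j)=\{x\in\cX_j\colon\det G(x)=0\}\in\fX_j\subseteq\fX$. Because $\emptyset\neq\ats(\mu)\subseteq\cV_{j+1}(\Lambda)$ by \ref{L1925.a} at level $j+1$, the set $\cV_{j+1}(\Lambda)$ is non-empty, so $\fX_{j+1}:=\cV_{j+1}(\Lambda)\cap\fX$ is a sub-$\sigma$-algebra of $\fX$ on a non-empty set containing every singleton $\{x\}$, $x\in\cV_{j+1}(\Lambda)$; this is \ref{L1925.c}. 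Finally, for $x\in\cV_{j+1}(\Lambda)$ we have $\bV_{j+1}(\Lambda;x)=\ker G(x)$, so $P_{j+1}(x)=\dP_{\ker G(x)}$, and applying Lemma~\ref{L1402} to $G\lceil\cV_{j+1}(\Lambda)\in\cM(\cV_{j+1}(\Lambda),\fX_{j+1};\cH_q)$ gives $P_{j+1}\in\cM(\cV_{j+1}(\Lambda),\fX_{j+1};\cH_q)$, which is \ref{L1925.d}. The main obstacle is precisely this last step: reducing the uncountable intersection defining $\cV_{j+1}(\Lambda)$ to the zero set of a single measurable determinant (Proposition~\ref{P144P}) and then selecting the kernel projections $P_{j+1}$ measurably (Lemma~\ref{L1402}).
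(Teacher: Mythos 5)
Your proposal is correct and follows essentially the same route as the paper: simultaneous induction on $j$, with Lemma~\ref{L1841P} and Proposition~\ref{P140iP} giving parts~(a) and~(b) at level $j+1$, and Proposition~\ref{P144P} reducing the (possibly uncountable) intersection defining $\cV_{j+1}(\Lambda)$ to the zero set of a single measurable determinant, followed by Lemma~\ref{L1402} for the measurability of $P_{j+1}$. The only cosmetic difference is that the paper spells out explicitly why $G=P_jF^{(j+1)}_jP_j+P_j^\bot$ is measurable and why its restriction to $\cX_{j+1}$ is $\fX_{j+1}$-measurable, which you compress into one sentence; the substance is identical.
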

\begin{proof}
The proof  proceeds by  induction on $j\in \dN_0$.

Clearly, \ref{L1925.a}--\ref{L1925.d} hold for $j=0$, since $\bV_0(\Lambda;x)=\dC^q$ and $P_0(x)=I_q$ for $x\in \cX_0=\cX$.

Now assume that~\ref{L1925.a}--\ref{L1925.d} hold for some $j\in\dN_0$.
Then $\mu(\cX\setminus\cX_j)=O$ by $\mu\in\cM^\mathrm{fa}_q(\cX,\fX)$ and~\ref{L1925.a}.
Lemma~\ref{L1841P}\ref{L1841P.a} applies and shows that $\mu_j:= \mu\lceil{\fX_j}$ belongs to $\cM_q(\cX_j,\fX_j)$ and  $\ats (\mu_j)=\ats (\mu)$.
Hence $\mu_j\in\cM^\mathrm{fa}_q(\cX_j,\fX_j)$.
Since $\mu_j(\{x\})=\mu(\{x\})$ for $x\in\cX_j$, we get  $\ran (\mu_j(\{x\}))\subseteq\bV_j(\Lambda;x)$ by~\ref{L1925.b}.
For $F\in\cE$, set $F_j = F\lceil{\cX_j}$.
By Lemma~\ref{L1841P}\ref{L1841P.b}, $ \cE_j := \{P_jF_jP_j\colon F\in\cE\}$ is a linear subspace of $L^1(\mu_j;\cH_q)$ with $\dim\cE_j \leq\dim\cE$ and $\Lambda(F)=\Lambda^{\mu_j}(P_jF_jP_j)$ for all $F\in\cE$.
In particular, $\cE_j$ is finite-dimensional.
Let $ \Lambda_j:=\Lambda^{\mu_j}$.
According to Lemma~\ref{L1841P}\ref{L1841P.c}, $ \Lambda_j \in\fL(\cE_j)$ and $\mu_j\in\cM_{\Lambda_j}$.
Clearly, $\mu_j\in\cM^\mathrm{fa}_{\Lambda_j}$, since $\mu\in\cM^\mathrm{fa}_{\Lambda}$.
Now we apply Proposition~\ref{P140iP} to the measurable space $(\cX_j,\fX_j)$, the subspace $\cE_j$ of $L^1(\mu_j;\cH_q)$, the functional $\Lambda_j\in\fL(\cE_j)$, its representing measure $\mu_j\in\cM_{\Lambda_j}$, and the family $(\bV_j(\Lambda;x))_{x\in\cX_j}$ of subspaces of $\dC^q$.
This yields
\begin{equation}\label{L1925.7}
    \ats (\mu_j)\subseteq\cV_+(\Lambda_j,P_j)
    \text{ and }
    \ran (\mu_j(\{x\}))\subseteq\bV_+(\Lambda_j,P_j;x)\text{ for } x\in\cX_j.
\end{equation}
Lemma~\ref{L1841P}\ref{L1841P.d} provides $\cN_+(\Lambda_j,P_j)=\{P_jF_jP_j\colon F\in\cN_{j+1}(\Lambda)\}$ as well as 
\begin{equation}\label{L1925.2}
    \cV_+(\Lambda_j,P_j)=\cX_{j+1}
    \text{ and }
    \bV_+(\Lambda_j,P_j;x)=\bV_{j+1}(\Lambda;x)\text{ for all }x\in\cX_{j}.
\end{equation}
Summarizing, we get
\begin{equation}\label{L1925.1}
    \ats (\mu)
    =\ats (\mu_j)
    \subseteq\cV_+(\Lambda_j,P_j)
    =\cX_{j+1},
\end{equation} 
i.\,e., we have proved~\ref{L1925.a} for $j+1$.

By $\mu\in\cM^\mathrm{fa}_q(\cX,\fX)$ and \eqref{L1925.1},  $\mu(\cX\setminus\cX_{j+1})=O$.
Since $\mu(\cX)\neq O$  by the assumption $\Lambda\neq 0$ stated at the beginning of this section, this implies $\cX_{j+1}\neq\emptyset$.
Now let $\xi\in\cX_{j+1}$.
Then $\xi\in \cX_j$ by Lemma~\ref{P148-aP}\ref{L1925.b}.
Hence, $\mu(\{\xi\})=\mu_j(\{\xi\})$.
Taking \eqref{L1925.7} and \eqref{L1925.2} into account, we  conclude that
\[\begin{split}
    \ran (\mu(\{\xi\}))
    =\ran (\mu_j(\{\xi\}))
    \subseteq\bV_+(\Lambda_j,P_j;\xi)
    =\bV_{j+1}(\Lambda;\xi).
\end{split}\]
Hence,~\ref{L1925.b} holds true for $j+1$.

From Proposition~\ref{P144P}, applied to the measurable space $(\cX_j,\fX_j)$, the  finite-dimensional  subspace $\cE_j$ of $L^1(\mu_j;\cH_q)$, the functional $\Lambda_j\in\fL(\cE_j)$, and the mapping $P_j\colon\cX_j\to\cH_q$, we conclude that there exists $S^{(j)}\in\cN_+(\Lambda_j,P_j)$ such that $\cV_+(\Lambda_j,P_j)=\{x\in\cX_j\colon\det(P_jS^{(j)}P_j+P_j^\bot)(x)=0\}$ and $\bV_+(\Lambda_j,P_j;x)=\ker(P_jS^{(j)}P_j+P_j^\bot)(x)$ for all $x\in\cX_j$.
Then there exists $F^{(j+1)}\in\cN_{j+1}(\Lambda)$ such that $S^{(j)}=P_jF^{(j+1)}_jP_j$.
By \eqref{L1925.2}, 
\begin{equation}\label{L1925.5}\begin{split}
    \cX_{j+1}
    =\cV_+(\Lambda_j,P_j)
    &=\{x\in\cX_j\colon\det(P_jS^{(j)}P_j+P_j^\bot)(x)=0\}\\
    &=\{x\in\cX_j\colon\det(P_j^2F^{(j+1)}_jP_j^2+P_j^\bot)(x)=0\}\\
    &=\{x\in\cX_j\colon\det(P_jF^{(j+1)}_jP_j+P_j^\bot)(x)=0\}
\end{split}\end{equation}
and
\begin{equation}\label{L1925.6}\begin{split}
    \bV_{j+1}(\Lambda;x)
    &=\bV_+(\Lambda_j,P_j;x)
    =\ker(P_jS^{(j)}P_j+P_j^\bot)(x)\\
    &=\ker(P_j^2F^{(j+1)}_jP_j^2+P_j^\bot)(x)\\
    &=\ker(P_jF^{(j+1)}_jP_j+P_j^\bot)(x)\qquad\text{for all }x\in\cX_{j}.
\end{split}\end{equation}
Because of \eqref{1.22}, we have $F^{(j+1)}\in\ker\Lambda\subseteq\cE\subseteq\cM(\cX,\mathfrak{X};\cH_q)$.
Using~\ref{L1925.c} we  infer $F^{(j+1)}_j\in\cM(\cX_j,\fX_j;\cH_q)$.
Since~\ref{L1925.d}  implies $P_j,P_j^\bot\in\cM(\cX_j,\fX_j;\cH_q)$, then $G:=P_jF^{(j+1)}_jP_j+P_j^\bot$ belongs to $\cM(\cX_j,\fX_j;\cH_q)$, so that $\det G\in\cM(\cX_j,\fX_j;\dR)$.
From \eqref{L1925.5} we obtain $\cX_{j+1}\in\fX_j$.
In view of $\fX_j\subseteq\fX$ by~\ref{L1925.c} and $\cX_{j+1}\neq\emptyset$, then $\cX_{j+1}\in\fX\setminus\{\emptyset\}$.
Thus,~\ref{L1925.c} holds  for $j+1$.

Set $H:=G\lceil{\cX_{j+1}}$.
We  show that $H\in\cM(\cX_{j+1},\fX_{j+1};\cH_q)$.
For let $B\in\mathfrak{B}(\cH_q)$.
In view of $G\in\cM(\cX_j,\fX_j;\cH_q)$, then $G^{-1}(B)\in\fX_j$.
From $\fX_j\subseteq\fX$ we have $G^{-1}(B)\in\fX$.
Because of $H^{-1}(B)=\cX_{j+1}\cap G^{-1}(B)$, then $H^{-1}(B)\in\cX_{j+1}\cap\mathfrak{X}=\fX_{j+1}$.
Thus, we have proved $H\in\cM(\cX_{j+1},\fX_{j+1};\cH_q)$.
Recall that $\cX_{j+1}\subseteq\cX_{j}$ by Lemma~\ref{P148-aP}\ref{P148-aP.b}.
Using \eqref{L1925.6} for $x\in\cX_{j+1}$ we get
\[
    P_{j+1}(x)
    =\dP_{\bV_{j+1}(\Lambda;x)}
    =\dP_{\ker(P_jF^{(j+1)}_jP_j+P_j^\bot)(x)}
    =\dP_{\ker G(x)}
    =\dP_{\ker H(x)}.
\]
Hence we can apply Lemma~\ref{L1402} to obtain $P_{j+1}\in\cM(\cX_{j+1},\fX_{j+1};\cH_q)$.
Thus,~\ref{L1925.d} holds true for $j+1$.
This completes the induction proof.
\end{proof}

\begin{prop}\label{P148-bP}
Let $\mu\in\cM^\mathrm{fa}_\Lambda$.
Then $\ats (\mu)\subseteq\cV( \Lambda)$ and $\ran (\mu(\{x\}))\subseteq\bV( \Lambda;x)$ for all  $x\in\cV(\Lambda)$.
\end{prop}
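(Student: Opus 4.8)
The plan is to obtain Proposition~\ref{P148-bP} as an immediate consequence of Lemma~\ref{L1925}, since all the real work—measurability of the sets $\cV_j(\Lambda)$ and of the projections $P_j$, and the verification that the recursion in Definition~\ref{D147} makes sense along the atoms of $\mu$—has already been carried out there. So I would fix $\mu\in\cM^{\mathrm{fa}}_\Lambda$ and, following the notation of Lemma~\ref{L1925}, write $\cX_j:=\cV_j(\Lambda)$ for $j\in\dN_0$.

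First I would handle the set of atoms. Lemma~\ref{L1925}\ref{L1925.a} gives $\ats(\mu)\subseteq\cX_j=\cV_j(\Lambda)$ for every $j\in\dN_0$. Since this inclusion holds for all $j$, intersecting over $j$ and using the definition $\cV(\Lambda)=\bigcap_{j=0}^\infty\cV_j(\Lambda)$ from Definition~\ref{cordef} (cf.\ \eqref{1.24}) yields $\ats(\mu)\subseteq\cV(\Lambda)$, which is the first assertion.

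Next I would treat the ranges. Let $x\in\cV(\Lambda)$. By \eqref{1.24} we have $x\in\cV_j(\Lambda)=\cX_j$ for every $j\in\dN_0$; in particular $\bV_j(\Lambda;x)$ is defined for all $j$, and so is $\bV(\Lambda;x)=\bigcap_{j=0}^\infty\bV_j(\Lambda;x)$, as noted after Definition~\ref{cordef}. Applying Lemma~\ref{L1925}\ref{L1925.b} at this point $x\in\cX_j$ gives $\ran(\mu(\{x\}))\subseteq\bV_j(\Lambda;x)$ for every $j\in\dN_0$. Intersecting over $j$ again produces $\ran(\mu(\{x\}))\subseteq\bigcap_{j=0}^\infty\bV_j(\Lambda;x)=\bV(\Lambda;x)$, completing the proof.

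I do not anticipate any genuine obstacle here: the content of the proposition is exactly the "limit" of the statements in Lemma~\ref{L1925} as $j\to\infty$, and both $\cV(\Lambda)$ and $\bV(\Lambda;x)$ are defined precisely as the corresponding countable intersections. The only point that requires (a purely formal) care is that, to even speak of $\bV_j(\Lambda;x)$ for a fixed $x\in\cV(\Lambda)$, one uses $\cV(\Lambda)\subseteq\cV_j(\Lambda)$ so that $x\in\cV_{j-1}(\Lambda)$ in the definition \eqref{VVj}; this is automatic from the nesting $\cV_{j+1}(\Lambda)\subseteq\cV_j(\Lambda)$ recorded in Lemma~\ref{P148-aP}\ref{P148-aP.b}.
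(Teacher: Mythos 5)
Your proposal is correct and is essentially identical to the paper's own proof: both deduce the claim by applying Lemma~\ref{L1925}\ref{L1925.a} and~\ref{L1925.b} for every $j\in\dN_0$ and then intersecting over $j$ using the definitions \eqref{1.24} of $\cV(\Lambda)$ and $\bV(\Lambda;x)$. Your additional remark that $x\in\cV(\Lambda)$ lies in each $\cV_{j}(\Lambda)$ (so that $\bV_j(\Lambda;x)$ is defined) is a correct, if implicit, point in the paper's argument as well.
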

\begin{proof}
Lemma~\ref{L1925} yields $\ats (\mu)\subseteq\cV_j(\Lambda)$ as well as $\ran (\mu(\{x\}))\subseteq\bV_j(\Lambda;x)$ for all $x\in\cV_j(\Lambda)$ and $j\in \dN_0$.
By \eqref{1.24}, then $\ats (\mu)\subseteq\bigcap_{j=0}^\infty\cV_j(\Lambda)=\cV( \Lambda)$ and $\ran (\mu(\{x\}))\subseteq\bigcap_{j=0}^\infty\bV_{j}(\Lambda;x)=\bV( \Lambda;x)$ for all $x\in\bigcap_{j=0}^\infty\cV_j(\Lambda)=\cV( \Lambda)$.
\end{proof}

\begin{lem}\label{L0920}
$\cV( \Lambda)\in\fX\setminus\{\emptyset\}$.
\end{lem}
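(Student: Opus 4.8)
The plan is to show the two assertions $\cV(\Lambda)\neq\emptyset$ and $\cV(\Lambda)\in\fX$ separately, both by exploiting the stabilization of the decreasing chain $\cV_0(\Lambda)\supseteq\cV_1(\Lambda)\supseteq\dotsb$ together with Lemma~\ref{L1925}. First I would fix a finitely atomic representing measure $\mu\in\cM^\mathrm{fa}_\Lambda$, which exists by Theorem~\ref{richter} since $\Lambda\neq O$; moreover $\mu(\cX)\neq O$, i.e.\ $\ats(\mu)\neq\emptyset$. By Proposition~\ref{P148-bP} we have $\ats(\mu)\subseteq\cV(\Lambda)$, and therefore $\cV(\Lambda)\neq\emptyset$. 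This disposes of nonemptiness immediately.

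For measurability, the key observation is that the sets $\cV_j(\Lambda)$ form a decreasing chain of \emph{finite-dimensional} character: each $\cN_{j+1}(\Lambda)$ is a convex cone in the finite-dimensional space $\cE$, and by Proposition~\ref{P144P} (applied in the form used inside the proof of Lemma~\ref{L1925}, i.e.\ via Lemma~\ref{L1841P}\ref{L1841P.d}) there is a single $F^{(j+1)}\in\cN_{j+1}(\Lambda)$ realizing $\cV_{j+1}(\Lambda)=\{x\in\cV_j(\Lambda)\colon\det(P_jF^{(j+1)}P_j+P_j^\bot)(x)=0\}$. Lemma~\ref{L1925}\ref{L1925.c}--\ref{L1925.d} already records that $\cX_j:=\cV_j(\Lambda)\in\fX\setminus\{\emptyset\}$ and $P_j\in\cM(\cX_j,\fX_j;\cH_q)$ for every $j\in\dN_0$, so each $\cV_j(\Lambda)$ is measurable. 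The remaining point is to pass to the intersection over all $j$. Here I would argue that the chain $(\cV_j(\Lambda))_{j\in\dN_0}$ stabilizes after finitely many steps: this is the content anticipated by Theorem~\ref{P148-c}, but for the present lemma it suffices to observe that if $\cV_{j+1}(\Lambda)=\cV_j(\Lambda)$ and $\bV_{j+1}(\Lambda;x)=\bV_j(\Lambda;x)$ for all $x$, then $P_{j+1}=P_j$, hence $\cN_{j+2}(\Lambda)=\cN_{j+1}(\Lambda)$ by Definition~\ref{D147}, and the construction is constant from index $j$ onwards. Thus I must show the chain does stabilize.

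To see stabilization, I would combine two monotonicities: $\cV_{j+1}(\Lambda)\subseteq\cV_j(\Lambda)$ and $\bV_{j+1}(\Lambda;x)\subseteq\bV_j(\Lambda;x)$ (Lemma~\ref{P148-aP}\ref{P148-aP.b}). Since $\bV_j(\Lambda;x)$ is a subspace of the fixed finite-dimensional space $\dC^q$ and $\dim\bV_j(\Lambda;x)$ is nonincreasing in $j$ at each point $x\in\cV(\Lambda)$ (and stays $\geq 1$ on $\ats(\mu)$ by Proposition~\ref{P148-bP}, since $\mu(\{x\})\neq O$ forces $\ran\mu(\{x\})\neq\{0\}$), the dimensions can decrease only boundedly many times; but a cleaner route, avoiding pointwise arguments, is to use that the cones $\cN_j(\Lambda)$ form an increasing chain in $\cE$ (Lemma~\ref{P148-aP}\ref{P148-aP.a}). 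The linear span $\Lin\cN_j(\Lambda)$ is then a nondecreasing chain of subspaces of the finite-dimensional space $\cE$, so it stabilizes, say $\Lin\cN_k(\Lambda)=\Lin\cN_{k+\ell}(\Lambda)$ for all $\ell\geq 0$. Once the span is constant, the representative $S=F_1+\dotsb+F_m$ from the proof of Proposition~\ref{P144P} can be chosen once and for all, so $\cV_{k+1}(\Lambda)=\cV_k(\Lambda)$ and likewise for the $\bV$'s, and hence $\cV(\Lambda)=\cV_k(\Lambda)\in\fX\setminus\{\emptyset\}$.

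I expect the main obstacle to be the bookkeeping needed to justify that constancy of $\Lin\cN_j(\Lambda)$ genuinely forces $\cN_{j+1}(\Lambda)=\cN_j(\Lambda)$ and $P_{j+1}=P_j$: one must be careful that $\cN_{j+1}(\Lambda)$ is defined with the projection $P_j$, which itself depends on $\bV_j(\Lambda;\cdot)$, so the argument is genuinely recursive and the stabilization of the \emph{pair} $(\cV_j(\Lambda),P_j)$ has to be propagated. If citing the full stabilization statement of Theorem~\ref{P148-c} is not permitted at this point of the paper, the self-contained dimension argument above (on $\dim\Lin\cN_j(\Lambda)\leq\dim\cE$) gives a finite bound $k\leq\dim\cE$ after which everything is constant, and then $\cV(\Lambda)=\bigcap_{j=0}^\infty\cV_j(\Lambda)=\cV_k(\Lambda)$, which lies in $\fX$ and is nonempty by the first paragraph. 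That finishes the proof.
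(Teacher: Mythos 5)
Your argument for nonemptiness is exactly the paper's: take $\mu\in\cM^\mathrm{fa}_\Lambda$ from Theorem~\ref{richter}, note $\ats(\mu)\neq\emptyset$, and apply Proposition~\ref{P148-bP}. That part is fine.

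For measurability you make the problem much harder than it is, and in doing so you open a genuine gap. Once you have invoked Lemma~\ref{L1925}\ref{L1925.c} to get $\cV_j(\Lambda)\in\fX$ for every $j\in\dN_0$, you are done: $\fX$ is a $\sigma$-algebra, so the \emph{countable} intersection $\cV(\Lambda)=\bigcap_{j=0}^\infty\cV_j(\Lambda)$ lies in $\fX$. This is all the paper does. No stabilization of the chain is needed for this lemma.

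The stabilization argument you sketch instead is essentially the content of Theorem~\ref{P148-c}, which comes \emph{after} this lemma and whose proof in the paper is long for a reason. Your proposed shortcut --- that once $\Lin\cN_j(\Lambda)$ stops growing the construction is constant --- does not go through as stated: $\cV_{j+1}(\Lambda)$ and $\bV_{j+1}(\Lambda;\cdot)$ are defined via $\det(P_jFP_j+P_j^\bot)$ and $\ker(P_jFP_j+P_j^\bot)$, and the projection $P_j$ changes with $j$ even while the cone (or its span) stays the same. Equality of spans therefore does not immediately yield $\cV_{j+1}(\Lambda)=\cV_j(\Lambda)$ or $P_{j+1}=P_j$; one has to track the identities $P_{s-1}F^{(r)}P_{s-1}=P_{s-1}(P_{r-1}F^{(r)}P_{r-1})P_{s-1}$ and run the induction on the pair $(\cV_j(\Lambda),P_j)$, which is precisely the technical core of the paper's proof of Theorem~\ref{P148-c}. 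You flag this obstacle yourself but do not resolve it, so as written the measurability claim rests on an unproved assertion --- one you can simply delete in favour of closure of $\fX$ under countable intersections.
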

\begin{proof}
According to Theorem~\ref{richter}, there exists $\mu\in\cM^\mathrm{fa}_\Lambda$.
Thus, we can apply Lemma~\ref{L1925} to obtain $\cV_j(\Lambda)\in\fX$ for all $j\in\dN_0$.
By \eqref{1.24},  $\cV( \Lambda)\in\fX$.
Because of $\mu\in\cM^\mathrm{fa}_q(\cX,\fX)$ and $\mu(\cX)\neq O$ by the assumption $\Lambda\neq 0$, we have $\ats (\mu)\neq\emptyset$.
Proposition~\ref{P148-bP} yields $\ats (\mu)\subseteq\cV( \Lambda)$.
Consequently, $\cV( \Lambda)\neq\emptyset$.
\end{proof}

\begin{thm}\label{P148-c}
Suppose  $\Lambda$ is a moment functional on $\cE$ such that $\Lambda\neq 0$.
Then the sets $\cV_j(\Lambda)$, $j\in \dN_0$, and $\cV(\Lambda)$ are measurable and there exists  $k\in\dN$ such that
\begin{equation}\label{1.25}
    \cX 
    =\cV_0(\Lambda)
    \supseteq\dotsb
    \supseteq\cV_k(\Lambda)
    =\cV_{k+\ell}(\Lambda)
    =\cV( \Lambda)\text{ for all }\ell\in\dN
\end{equation}
and
\begin{multline}\label{1.25+}
    \dC^q
    =\bV_0(\Lambda;\xi)
    \supseteq\dotsb
    \supseteq\bV_{k}(\Lambda;\xi)
    =\bV_{k+\ell}(\Lambda;\xi)
    =\bV( \Lambda;\xi)
    \neq \{0\}\\
    \text{for all }\xi\in\cV( \Lambda)\text{ and }\ell\in\dN.
\end{multline}
\end{thm}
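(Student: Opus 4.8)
The plan is to deduce measurability from results already at hand and to obtain the eventual constancy by pinning down a single increasing chain of subspaces of a fixed finite-dimensional vector space whose stabilization forces everything else to stabilize. Since $\Lambda\neq0$, Theorem~\ref{richter} yields a measure $\mu\in\cM^\mathrm{fa}_\Lambda$, and then Lemma~\ref{L1925} gives $\cV_j(\Lambda)\in\fX$ for every $j\in\dN_0$; hence $\cV(\Lambda)=\bigcap_j\cV_j(\Lambda)\in\fX$ (this is also Lemma~\ref{L0920}, which moreover gives $\cV(\Lambda)\neq\emptyset$). Throughout the rest I would write $W_j:=\Lin(\cN_j(\Lambda))$, a subspace of the finite-dimensional space $\ker\Lambda$.

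First I would record two structural identities. The first is elementary: for $x\in\cV_{j-1}(\Lambda)$ and $v\in\dC^q$, the relation $(P_{j-1}GP_{j-1}+P_{j-1}^\bot)(x)v=0$ splits, by orthogonality of the ranges of $P_{j-1}(x)$ and $P_{j-1}^\bot(x)$, into $(P_{j-1}GP_{j-1})(x)v=0$ and $P_{j-1}^\bot(x)v=0$ separately, and this is preserved under real linear combinations of the $G$'s; consequently
\[
    \bV_j(\Lambda;x)=\bigcap\nolimits_{F\in W_j}\ker(P_{j-1}FP_{j-1}+P_{j-1}^\bot)(x)\qquad\text{for }x\in\cV_{j-1}(\Lambda).
\]
The second is obtained by applying Proposition~\ref{P144P} to the reduced data $(\Lambda_{j-1},P_{j-1})$ on the measurable space $(\cV_{j-1}(\Lambda),\cV_{j-1}(\Lambda)\cap\fX)$ — which is legitimate by Lemma~\ref{L1841P}, whose hypotheses are verified in Lemma~\ref{L1925} — and translating back via Lemma~\ref{L1841P}\ref{L1841P.d}: there is a single $S^{(j)}\in\cN_j(\Lambda)$ with $\bV_j(\Lambda;x)=\ker(P_{j-1}S^{(j)}P_{j-1}+P_{j-1}^\bot)(x)$ for $x\in\cV_{j-1}(\Lambda)$ and $\cV_j(\Lambda)=\{x\in\cV_{j-1}(\Lambda)\colon\det(P_{j-1}S^{(j)}P_{j-1}+P_{j-1}^\bot)(x)=0\}$. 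In particular $\cV_j(\Lambda)=\{x\in\cV_{j-1}(\Lambda)\colon\bV_j(\Lambda;x)\neq\{0\}\}$.

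Now the core argument. By Lemma~\ref{P148-aP}\ref{P148-aP.a} the $W_j$ form an increasing chain in $\ker\Lambda$, so there is $k_1\in\dN$ with $W_j=W_{k_1}$ for all $j\geq k_1$. The key ``autonomy'' observation is: if $j\geq k_1$, $x\in\cV_j(\Lambda)$ and $\bV_{j+1}(\Lambda;x)=\bV_j(\Lambda;x)$, then $P_{j+1}(x)=P_j(x)$ and $W_{j+1}=W_{j+2}=W_{k_1}$, so the first identity forces $\bV_{j+2}(\Lambda;x)=\bV_{j+1}(\Lambda;x)\neq\{0\}$, whence, by the second identity, $x\in\cV_{j+2}(\Lambda)$; inductively, $x\in\cV_{j'}(\Lambda)$ and $\bV_{j'}(\Lambda;x)=\bV_j(\Lambda;x)$ for all $j'\geq j$. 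Combined with Lemma~\ref{P148-aP}\ref{P148-aP.b} (the $\bV_{k_1+i}(\Lambda;x)$ are non-increasing, and taking the value $\{0\}$ from the first index $j$ with $x\notin\cV_j(\Lambda)$ on), this shows that for each $x\in\cV_{k_1}(\Lambda)$ the subspaces $\bV_{k_1+i}(\Lambda;x)$ strictly decrease — each strict step lowering the dimension by at least one — until they become constant, after at most $\dim\bV_{k_1}(\Lambda;x)\leq q$ strict steps. Hence with $k:=k_1+q$: for $x\in\cV_{k_1}(\Lambda)$ either $\bV_k(\Lambda;x)=\{0\}$, so $x\notin\cV_k(\Lambda)$ and $x\notin\cV(\Lambda)$, or $\bV_j(\Lambda;x)$ equals one fixed nonzero subspace for all $j\geq k$, so $x\in\cV_j(\Lambda)$ for all $j$, i.e.\ $x\in\cV(\Lambda)$, and $\bV(\Lambda;x)=\bV_k(\Lambda;x)\neq\{0\}$. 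Since $\cV(\Lambda)\subseteq\cV_k(\Lambda)\subseteq\cV_{k_1}(\Lambda)$, this gives $\cV_k(\Lambda)=\cV(\Lambda)$, hence $\cV_{k+\ell}(\Lambda)=\cV(\Lambda)$ for all $\ell\in\dN$ (as $\cV(\Lambda)\subseteq\cV_{k+\ell}(\Lambda)\subseteq\cV_k(\Lambda)$), and $\bV_{k+\ell}(\Lambda;\xi)=\bV_k(\Lambda;\xi)=\bV(\Lambda;\xi)\neq\{0\}$ for $\xi\in\cV(\Lambda)$; the remaining inclusions in \eqref{1.25} and \eqref{1.25+} are Lemma~\ref{P148-aP}\ref{P148-aP.b} together with $\cV_0(\Lambda)=\cX$ and $\bV_0(\Lambda;\xi)=\dC^q$.

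The step I expect to be the main obstacle is making precise and correctly book-keeping the two structural identities of the second paragraph — in particular extracting the single controlling element $S^{(j)}$ and the characterization $\cV_j(\Lambda)=\{x\colon\bV_j(\Lambda;x)\neq\{0\}\}$ out of Proposition~\ref{P144P} and Lemma~\ref{L1841P} (via Lemma~\ref{L1925}): without that single $S^{(j)}$, the intersection of the nonzero kernels $\ker(P_{j-1}FP_{j-1}+P_{j-1}^\bot)(x)$ over $F\in\cN_j(\Lambda)$ could a priori collapse to $\{0\}$ while $x$ still lies in $\cV_j(\Lambda)$, and the ``autonomy'' dichotomy would fail. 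Once these identities are available, the uniform bound $k=k_1+q$ is forced by the crude fact that a non-increasing sequence of subspaces of $\dC^q$ that becomes constant as soon as two consecutive terms agree admits at most $q$ strict drops.
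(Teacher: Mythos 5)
Your argument is correct, and its skeleton coincides with the paper's: measurability comes from Lemma~\ref{L1925} (after producing $\mu\in\cM^\mathrm{fa}_\Lambda$ via Theorem~\ref{richter}), and the decisive reduction is the extraction, through Lemma~\ref{L1841P} and Proposition~\ref{P144P}, of a single element $S^{(j)}\in\cN_j(\Lambda)$ whose kernel computes $\bV_j(\Lambda;x)$ and whose determinant cuts out $\cV_j(\Lambda)$ --- you rightly flag the resulting characterization $\cV_j(\Lambda)=\{x\in\cV_{j-1}(\Lambda)\colon\bV_j(\Lambda;x)\neq\{0\}\}$ as the point where the proof would otherwise break. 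Where you genuinely differ is the eventual-constancy step. The paper chooses $k$ so that every controlling element $F^{(s)}$ lies in $\Lin\{F^{(1)},\dotsc,F^{(k)}\}$ and then runs a global induction on $\ell$, expanding $F^{(k+\ell+1)}$ in the $F^{(r)}$ and using the compatibility $P_{k+\ell}P_{r-1}=P_{k+\ell}$ to show directly that each $v\in\bV_{k}(\Lambda;\xi)$ survives to level $k+\ell+1$. You instead stabilize the chain $W_j=\Lin(\cN_j(\Lambda))$ inside the finite-dimensional space $\ker\Lambda$ and observe that, once these spans are stable and two consecutive fibres $\bV_j(\Lambda;x)$, $\bV_{j+1}(\Lambda;x)$ coincide at a point $x$, the recursion at $x$ becomes autonomous; a dimension count in $\dC^q$ then caps the number of strict drops at $q$ and gives $k=k_1+q$. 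Both finishes are sound; yours makes the stabilization mechanism more transparent and yields the explicit increment $q$ over the index $k_1$ at which the cones stabilize (neither argument bounds that first index explicitly), while the paper's works only with the chosen representatives $F^{(j)}$. Note also that your first structural identity, replacing $\cN_j(\Lambda)$ by its linear span in the intersection defining $\bV_j(\Lambda;x)$, is exactly the splitting argument already present inside the proof of Proposition~\ref{P144P}, so nothing new is needed there.
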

\begin{proof}
    By Lemma~\ref{L1925}\ref{L1925.c} each set $\cV_j(\Lambda)$ is measurable and so is $\cV(\Lambda)$ by \eqref{1.24}.

By Lemma~\ref{P148-aP}  and \eqref{1.24}, we have
\begin{align}
    \cX 
    =\cV_0(\Lambda)
    \supseteq\cV_{1}(\Lambda)
    &\supseteq\cV_{2}(\Lambda)
    \supseteq\dotsb,\label{P148-c.1}\\
    \dC^q
    =\bV_0(\Lambda;x)
    \supseteq\bV_{1}(\Lambda;x)
    &\supseteq\bV_{2}(\Lambda;x)
    \supseteq\dotsb&\text{for all }x&\in\cV(\Lambda).\label{P148-c.2}
\end{align}
According to  Theorem~\ref{richter}, there exists $\mu\in\cM^\mathrm{fa}_\Lambda$. 
Let $j\in\dN_0$.
From Lemma~\ref{L1925} we  infer that $\ats (\mu)\subseteq\cX_j$ and $\ran (\mu(\{x\}))\subseteq\bV_j(\Lambda;x)$ for all $x\in\cX_j$ as well as $\cX_j\in\fX\setminus\{\emptyset\}$ and $P_j\in\cM(\cX_j,\fX_j;\cH_q)$, where  $\cX_j$ and $\fX_j$ are as in Lemma~\ref{L1925}.
Since $\mu\in\cM^\mathrm{fa}_q(\cX,\fX)$,  in particular $\mu(\cX\setminus\cX_j)=O$.
Consequently, we can apply Lemma~\ref{L1841P}.
By Lemma~\ref{L1841P}\ref{L1841P.a}, $\mu_j:= \mu\lceil{\fX_j}$ belongs to $\cM_q(\cX_j,\fX_j)$.
Clearly, $\mu_j\in\cM^\mathrm{fa}_q(\cX_j,\fX_j)$.
For $F\in\cE$, let $F_j := F\lceil{\cX_j}$.
Lemma~\ref{L1841P}\ref{L1841P.b} then shows that $ \cE_j := \{P_jF_jP_j\colon F\in\cE\}$ is a linear subspace of $L^1(\mu_j;\cH_q)$ with $\dim\cE_j \leq\dim\cE$. 
In particular, $\cE_j$ is finite-dimensional.
Let $ \Lambda_j:=\Lambda^{\mu_j}$.
By Lemma~\ref{L1841P}\ref{L1841P.c}, $ \Lambda_j:=\Lambda^{\mu_j}$ belongs to $\fL(\cE_j)$.
Lemma~\ref{L1841P}\ref{L1841P.d} implies $\cN_+(\Lambda_j,P_j)=\{P_jF_jP_j\colon F\in\cN_{j+1}(\Lambda)\}$ and that \eqref{L1925.2} holds.

From Proposition~\ref{P144P}, applied to the measurable space $(\cX_j,\fX_j)$, the  finite-dimensional  subspace $\cE_j$ of $L^1(\mu_j;\cH_q)$, the functional $\Lambda_j\in\fL(\cE_j)$, and the projector-valued mapping $P_j$, it follows that there is $S^{(j)}\in\cN_+(\Lambda_j,P_j)$ such that
\begin{align*}
    \cV_+(\Lambda_j,P_j)&=\{\det(P_jS^{(j)}P_j+P_j^\bot)=0\},\\
    \bV_+(\Lambda_j,P_j;x)&=\ker(P_jS^{(j)}P_j+P_j^\bot)(x)
    \quad\text{for all }x\in\cX_j.
\end{align*}
Then there exists $F^{(j+1)}\in\cN_{j+1}(\Lambda)$ such that $S^{(j)}=P_jF^{(j+1)}_jP_j$.
In view of \eqref{L1925.2}, we get  \eqref{L1925.5} and \eqref{L1925.6}.
Taking into account \eqref{P148-c.1}, we can infer from \eqref{L1925.5} by induction that
\begin{equation}\label{P148-c.3}
    \cX_{s}
    =\bigcap_{r=1}^s\{x\in\cX_{s-1}\colon\det(P_{r-1}F^{(r)}P_{r-1}+P_{r-1}^\bot)(x)=0\}\quad\text{for }s\in\dN.
\end{equation}
According to \eqref{L1925.6}, we have
\begin{equation}\label{P148-c.4}
    \bV_{s}(\Lambda;x)
    =\ker(P_{s-1}F^{(s)}P_{s-1}+P_{s-1}^\bot)(x)
    \quad\text{for all }x\in\cX_{s-1},\; s\in\dN.
\end{equation}

Because of $\dim\cE<\infty$ and $F^{(s)}\in\cE$, there exists $k\in\dN$ such that
\begin{equation}\label{P148-c.8}
    F^{(s)}
    =\sum\nolimits_{r=1}^k\eta^{(s)}_{r}F^{(r)},
\end{equation}
where $\eta^{(s)}_{1},\dotsc,\eta^{(s)}_{k}\in\dR$ for $s\in\dN$.
We now show by induction that
\begin{equation}\label{P148-c.9}
    \cX_{k}=\cX_{k+\ell}~
    \text{ and }~
    \bV_{k}(\Lambda;x)
    =\bV_{k+\ell}(\Lambda;x)~ \text{ for all }~ x\in\cX_{k} ~
    \text{and }\ell\in\dN.
\end{equation}
Let $s\in\dN$, $s\geq k+1$, and $x\in\cX_{s-1}$.
For  $r\in\{1,\dotsc,k\}$, it follows from Lemma~\ref{P148-aP}\ref{P148-aP.b} that $x\in\cX_{r-1}$ and $P_{s-1}(x)P_{r-1}(x)=P_{r-1}(x)P_{s-1}(x)=P_{s-1}(x)$, so that $(P_{s-1}F^{(r)}P_{s-1})(x)=P_{s-1}(x)(P_{r-1}F^{(r)}P_{r-1})(x)P_{s-1}(x)$.
By \eqref{P148-c.8} we get
\begin{equation}\label{P148-c.5}\begin{split}
    (P_{s-1}F^{(s)}P_{s-1})(x)
    & =P_{s-1}(x)\left(\sum_{r=1}^k\eta^{(s)}_{r}F^{(r)}(x)\right)P_{s-1}(x)\\
    & =\sum_{r=1}^k\eta^{(s)}_{r}P_{s-1}(x)(P_{r-1}F^{(r)}P_{r-1})(x)P_{s-1}(x)
\end{split}\end{equation}
for all $x\in\cX_{s-1}$ and $s\in\{k+1,k+2,\dotsc\}$.
Let $\xi\in\cX_k$ and  $v\in\bV_k(\Lambda;\xi)$.
Then $[P_k(\xi)]v=v$ and $[P_k^\bot(\xi)]v=0$.
By  Lemma~\ref{P148-aP}\ref{P148-aP.b}, for  $r\in\{1,\dotsc,k\}$, we have $\xi\in\cX_{r-1}$ and $v\in\bV_r(\Lambda;\xi)$.
In particular, $\xi\in\cX_{k-1}$.
For  $r\in\{1,\dotsc,k\}$, by \eqref{P148-c.4}, then $v\in\ker(P_{r-1}F^{(r)}P_{r-1}+P_{r-1}^\bot)(\xi)$, so that $[(P_{r-1}F^{(r)}P_{r-1})(\xi)]v=-[P_{r-1}^\bot(\xi)]v$.
This implies $[(P_{r-1}F^{(r)}P_{r-1})(\xi)]v=0$ and $[P_{r-1}^\bot(\xi)]v=0$.
Taking into account \eqref{P148-c.5} for $s=k+1$, we obtain
\[\begin{split}
    [(P_{k}F^{(k+1)}P_{k}+P_{k}^\bot)(\xi)]v
    &=[(P_{k}F^{(k+1)}P_{k})(\xi)]v\\
    &=\sum_{r=1}^k\eta^{(k+1)}_{r}[P_{k}(\xi)][(P_{r-1}F^{(r)}P_{r-1})(\xi)][P_{k}(\xi)]v\\
    &=\sum_{r=1}^k\eta^{(k+1)}_{r}[P_{k}(\xi)][(P_{r-1}F^{(r)}P_{r-1})(\xi)]v
    =0,
\end{split}\]
i.\,e., $v\in\ker(P_{k}F^{(k+1)}P_{k}+P_{k}^\bot)(\xi)$.
By $\xi\in\cX_k$ and \eqref{P148-c.4}, then $v\in\bV_{k+1}(\Lambda;\xi)$.
Since $v\in\bV_{k}(\Lambda;\xi)$ was arbitrary, we have shown $\bV_{k}(\Lambda;\xi)\subseteq\bV_{k+1}(\Lambda;\xi)$.
Using  Lemma~\ref{P148-aP}\ref{P148-aP.b}  it follows that $\bV_{k}(\Lambda;\xi)=\bV_{k+1}(\Lambda;\xi)$.

In view of $\xi\in\cX_k$ and \eqref{P148-c.3}, we have $\det(P_{r-1}F^{(r)}P_{r-1}+P_{r-1}^\bot)(\xi)=0$ for  $r\in\{1,\dotsc,k\}$.
In particular, $\det(P_{k-1}F^{(k)}P_{k-1}+P_{k-1}^\bot)(\xi)=0$.
By $\xi\in\cX_{k-1}$ and \eqref{P148-c.4}, we obtain $\bV_{k}(\Lambda;\xi)\neq\{0\}$.
Hence, $\bV_{k+1}(\Lambda;\xi)\neq\{0\}$.
Because of $\xi\in\cX_k$ and \eqref{P148-c.4}, then $\det(P_{k}F^{(k+1)}P_{k}+P_{k}^\bot)(\xi)=0$.
Hence, $\det(P_{r-1}F^{(r)}P_{r-1}+P_{r-1}^\bot)(\xi)=0$ for $r\in\{1,\dotsc,k+1\}$.
Then $\xi\in\cX_{k+1}$ by \eqref{P148-c.3}.
Since $\xi\in\cX_k$ was arbitrary, we have shown $\cX_k\subseteq\cX_{k+1}$.
Hence  $\cX_k=\cX_{k+1}$ by \eqref{P148-c.1}.

Now we begin the induction proof of \eqref{P148-c.9}.
Assume that $\cX_k=\cX_{k+\ell}$ and $\bV_{k}(\Lambda;x)=\bV_{k+\ell}(\Lambda;x)$ for all $x\in\cX_k$ is  proved for some $\ell\in\dN$.
Then $\xi\in\cX_{k+\ell}$ and $P_{k+\ell}=P_k$, which implies $[P_{k+\ell}(\xi)]v=v$ and $[P_{k+\ell}^\bot(\xi)]v=0$.
Using \eqref{P148-c.5} for $s=k+\ell+1$, we get 
\[\begin{split}
    &[(P_{k+\ell}F^{(k+\ell+1)}P_{k+\ell}+P_{k+\ell}^\bot)(\xi)]v
    =[(P_{k+\ell}F^{(k+\ell+1)}P_{k+\ell})(\xi)]v\\
    &=\sum_{r=1}^k\eta^{(k+\ell+1)}_{r}[P_{k+\ell}(\xi)][(P_{r-1}F^{(r)}P_{r-1})(\xi)][P_{k+\ell}(\xi)]v\\
    &=\sum_{r=1}^k\eta^{(k+\ell+1)}_{r}[P_{k+\ell}(\xi)][(P_{r-1}F^{(r)}P_{r-1})(\xi)]v
    =0,
\end{split}\]
i.\,e., $v\in\ker(P_{k+\ell}F^{(k+\ell+1)}P_{k+\ell}+P_{k+\ell}^\bot)(\xi)$.
From $\xi\in\cX_{k+\ell}$ and \eqref{P148-c.4} we obtain $v\in\bV_{k+\ell+1}(\Lambda;\xi)$.
Since $v\in\bV_{k}(\Lambda;\xi)$ was arbitrary, we have shown $\bV_{k}(\Lambda;\xi)\subseteq\bV_{k+\ell+1}(\Lambda;\xi)$.
Then $\bV_{k}(\Lambda;\xi)=\bV_{k+\ell+1}(\Lambda;\xi)$ by  Lemma~\ref{P148-aP}\ref{P148-aP.b}.
By $\xi\in\cX_{k+\ell}$ and \eqref{P148-c.3}, we have $\det(P_{r-1}F^{(r)}P_{r-1}+P_{r-1}^\bot)(\xi)=0$ for all $r\in\{1,\dotsc,k+\ell\}$.
In particular, $\det(P_{k+\ell-1}F^{(k+\ell)}P_{k+\ell-1}+P_{k+\ell-1}^\bot)(\xi)=0$.
Because of $\xi\in\cX_{k+\ell}$ and \eqref{P148-c.1},  we have  $\xi\in\cX_{k+\ell-1}$.
In view of \eqref{P148-c.4}, then $\bV_{k+\ell}(\Lambda;\xi)\neq\{0\}$.
Hence, $\bV_{k+\ell+1}(\Lambda;\xi)\neq\{0\}$.
By $\xi\in\cX_{k+\ell}$ and \eqref{P148-c.4}, then $\det(P_{k+\ell}F^{(k+\ell+1)}P_{k+\ell}+P_{k+\ell}^\bot)(\xi)=0$, so that $\det(P_{r-1}F^{(r)}P_{r-1}+P_{r-1}^\bot)(\xi)=0$ for all $r\in\{1,\dotsc,k+\ell+1\}$.
In view of \eqref{P148-c.3},  $\xi\in\cX_{k+\ell+1}$.
Thus, since $\xi\in\cX_k$ was arbitrary,   $\cX_k\subseteq\cX_{k+\ell+1}$.
From \eqref{P148-c.1} it follows that $\cX_k=\cX_{k+\ell+1}$.
This completes the induction proof of  \eqref{P148-c.9}.

Using \eqref{1.24}, \eqref{P148-c.1}, and \eqref{P148-c.2} we can infer from \eqref{P148-c.9} that $\cV( \Lambda)=\cX_k$ and $\bV( \Lambda;x)=\bV_{k}(\Lambda;x)$ for all $x\in\cV( \Lambda)$.
Combining this with \eqref{P148-c.1}, \eqref{P148-c.2}, and \eqref{P148-c.9}, we get \eqref{1.25} and \eqref{1.25+}.

Now let $\xi\in\cV( \Lambda)$.
According to \eqref{1.24}, then $\xi\in\cX_k$.
By \eqref{P148-c.3} we have $\xi\in\cX_{k-1}$ and $\det(P_{k-1}F^{(k)}P_{k-1}+P_{k-1}^\bot)(\xi)=0$.
From $\xi\in\cX_{k-1}$ and \eqref{P148-c.4}, then $\bV_{k}(\Lambda;\xi)\neq\{0\}$.
By virtue of \eqref{1.25+},  $\bV( \Lambda;\xi)\neq\{0\}$ follows.
\end{proof} 

\begin{lem}\label{T149<}
$\cW(\Lambda)\subseteq\cV( \Lambda)$ and $\bW(\Lambda;x)\subseteq\bV( \Lambda;x)$ for all $x\in\cW(\Lambda)$.
\end{lem}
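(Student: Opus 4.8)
The plan is to deduce this directly from two facts already established: the reduction of $\cW(\Lambda)$ and $\bW(\Lambda;x)$ to finitely atomic representing measures (Lemma~\ref{L1459}), and Proposition~\ref{P148-bP}, which asserts precisely that every $\mu\in\cM^\mathrm{fa}_\Lambda$ satisfies $\ats(\mu)\subseteq\cV(\Lambda)$ and $\ran(\mu(\{x\}))\subseteq\bV(\Lambda;x)$ for all $x\in\cV(\Lambda)$. Since essentially all the substantive work sits in Proposition~\ref{P148-bP} (and behind it the inductive Lemma~\ref{L1925}), no real obstacle remains here; the argument is a short chase through the relevant definitions.

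First I would prove the inclusion $\cW(\Lambda)\subseteq\cV(\Lambda)$. Let $\xi\in\cW(\Lambda)$. By Lemma~\ref{L1459}, $\cW(\Lambda)=\bigcup_{\nu\in\cM^\mathrm{fa}_\Lambda}\ats(\nu)$, so there is a finitely atomic measure $\nu\in\cM^\mathrm{fa}_\Lambda$ with $\xi\in\ats(\nu)$. Applying Proposition~\ref{P148-bP} to $\nu$ gives $\ats(\nu)\subseteq\cV(\Lambda)$, hence $\xi\in\cV(\Lambda)$.

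Next I would prove the inclusion of the associated subspaces. Fix $x\in\cW(\Lambda)$; by the first part, $x\in\cV(\Lambda)$, so $\bV(\Lambda;x)$ is defined. Let $v\in\bW(\Lambda;x)$. By the second assertion of Lemma~\ref{L1459}, $\bW(\Lambda;x)=\bigcup_{\nu\in\cM^\mathrm{fa}_\Lambda}\ran(\nu(\{x\}))$, so there exists $\nu\in\cM^\mathrm{fa}_\Lambda$ with $v\in\ran(\nu(\{x\}))$. Since $x\in\cV(\Lambda)$, Proposition~\ref{P148-bP} yields $\ran(\nu(\{x\}))\subseteq\bV(\Lambda;x)$, whence $v\in\bV(\Lambda;x)$. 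This proves $\bW(\Lambda;x)\subseteq\bV(\Lambda;x)$, completing the argument. (Alternatively, one could invoke Lemma~\ref{R0742} to choose a single $\nu\in\cM^\mathrm{fa}_\Lambda$ with $\bW(\Lambda;x)=\ran(\nu(\{x\}))$ and apply Proposition~\ref{P148-bP} to that one measure, but this refinement is not needed.)
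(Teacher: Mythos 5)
Your proposal is correct and follows essentially the same route as the paper: both reduce to finitely atomic representing measures via Lemma~\ref{L1459} and then invoke Proposition~\ref{P148-bP} for the atoms and the ranges. No discrepancies.
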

\begin{proof}
Let $x\in\cW(\Lambda)$.
By Lemma~\ref{L1459}  there exists a $\nu\in\cM^\mathrm{fa}_\Lambda$ such that $x\in\ats (\nu)$.
Proposition~\ref{P148-bP} yields $\ats (\nu)\subseteq\cV( \Lambda)$.
Consequently, $x\in\cV( \Lambda)$.
Thus we have proved that $\cW(\Lambda)\subseteq\cV( \Lambda)$.

Now let $v\in\bW(\Lambda;x)$.
Because of Lemma~\ref{L1459},  there is $\nu\in\cM^\mathrm{fa}_\Lambda$ such that $v\in\ran (\nu(\{x\}))$.
Since $x\in\cV( \Lambda)$ by $\cW(\Lambda)\subseteq\cV( \Lambda)$, Proposition~\ref{P148-bP} gives $\ran (\nu(\{x\}))\subseteq\bV( \Lambda;x)$.
Thus $v\in\bV( \Lambda;x)$ and $\bW(\Lambda;x)\subseteq\bV( \Lambda;x)$.
\end{proof}

\begin{lem}\label{T149>} $\cV( \Lambda)\subseteq \cW(\Lambda)$ and $\bV( \Lambda;x)\subseteq\bW(\Lambda;x)$ for all $x\in\cV( \Lambda)$.
\end{lem}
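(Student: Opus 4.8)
The plan is to obtain this inclusion from Corollary~\ref{C131P}, with the stabilization furnished by Theorem~\ref{P148-c} playing the decisive role. Fix $k\in\dN$ as in Theorem~\ref{P148-c}, so that $\cV_k(\Lambda)=\cV(\Lambda)$ and $\bV_k(\Lambda;x)=\bV(\Lambda;x)\neq\{0\}$ for every $x\in\cV(\Lambda)$; recall that $P_k(x)=\dP_{\bV_k(\Lambda;x)}=\dP_{\bV(\Lambda;x)}$. First I would pick $\mu\in\cM^\mathrm{fa}_\Lambda$, which exists by Theorem~\ref{richter}, and set $\cU:=\cV(\Lambda)$, $\fU:=\cU\cap\fX$, and $\bU_x:=\bV(\Lambda;x)$, $P(x):=P_k(x)$ for $x\in\cU$. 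Then $\cU\in\fX\setminus\{\emptyset\}$ by Lemma~\ref{L0920}; Proposition~\ref{P148-bP} gives $\ats(\mu)\subseteq\cV(\Lambda)=\cU$ and $\ran(\mu(\{x\}))\subseteq\bV(\Lambda;x)=\bU_x$ for all $x\in\cU$, and since $\mu$ is finitely atomic with atoms in $\cU$, also $\mu(\cX\setminus\cU)=O$; finally $P_k\in\cM(\cU,\fU;\cH_q)$ by Lemma~\ref{L1925}\ref{L1925.d}. Thus every hypothesis of Corollary~\ref{C131P} is in place except condition~($\ast$), and verifying~($\ast$) is the crux of the proof.

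To check~($\ast$), suppose $F\in\cE$ satisfies $(P_kFP_k)(x)\succeq O$ for all $x\in\cU=\cV_k(\Lambda)$ and $\Lambda(F)=0$. By \eqref{1.22} this means exactly $F\in\cN_{k+1}(\Lambda)$, hence by \eqref{VVj} we have $\bV_{k+1}(\Lambda;x)\subseteq\ker(P_kFP_k+P_k^\bot)(x)$ for every $x\in\cV_k(\Lambda)$. Here I would invoke Theorem~\ref{P148-c} to replace $\bV_{k+1}(\Lambda;x)$ by $\bV_k(\Lambda;x)=\ran P_k(x)$. Then for $x\in\cU$ and $v\in\ran P_k(x)$ we get $(P_kFP_k)(x)v+P_k^\bot(x)v=0$, and since $P_k^\bot(x)v=0$ this forces $(P_kFP_k)(x)v=0$; as $(P_kFP_k)(x)$ also annihilates $\ker P_k(x)$, it follows that $(P_kFP_k)(x)=O$ for all $x\in\cU$, which is precisely~($\ast$).

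With Corollary~\ref{C131P} now applicable, the conclusion follows quickly. Let $\xi\in\cV(\Lambda)$ and $v\in\bV(\Lambda;\xi)$; the case $v=0$ is trivial since $\bW(\Lambda;\xi)$ is a linear subspace by Lemma~\ref{R0742}, so assume $v\neq0$ and put $M:=vv^*\in\cH_{q,\succeq}$, whence $\ran M=\Lin\{v\}\subseteq\bV(\Lambda;\xi)=\bU_\xi$. Corollary~\ref{C131P} then produces $\nu\in\cM^\mathrm{fa}_\Lambda$ and $\epsilon\in(0,\infty)$ with $\nu(\{\xi\})\succeq\epsilon vv^*$; hence $\nu(\{\xi\})\neq O$, so $\xi\in\ats(\nu)\subseteq\cW(\Lambda)$, and $\ran(\nu(\{\xi\}))\supseteq\ran(\epsilon vv^*)=\Lin\{v\}\ni v$, so $v\in\bW(\Lambda;\xi)$. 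Since $\bV(\Lambda;\xi)\neq\{0\}$ by Theorem~\ref{P148-c}, applying this with any one nonzero $v$ already yields $\xi\in\cW(\Lambda)$, so $\cV(\Lambda)\subseteq\cW(\Lambda)$, and then $\bV(\Lambda;\xi)\subseteq\bW(\Lambda;\xi)$ holds for every $\xi\in\cV(\Lambda)$. The only genuine obstacle is condition~($\ast$), and it dissolves exactly because of the stabilization $\bV_{k+1}(\Lambda;x)=\bV_k(\Lambda;x)$ established in Theorem~\ref{P148-c}; everything else is bookkeeping with Corollary~\ref{C131P}, Proposition~\ref{P148-bP}, and Lemma~\ref{L1925}.
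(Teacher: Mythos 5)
Your proposal is correct and follows essentially the same route as the paper: reduce to Corollary~\ref{C131P} with $\cU=\cV(\Lambda)$, $\bU_x=\bV(\Lambda;x)$, verify condition~($\ast$) via $F\in\cN_{k+1}(\Lambda)$, \eqref{VVj}, and the stabilization $\bV_{k+1}(\Lambda;x)=\bV_k(\Lambda;x)$ from Theorem~\ref{P148-c}, and then take $M=vv^*$ with $v\neq 0$ to land in $\cW(\Lambda)$ and $\bW(\Lambda;\xi)$. The only cosmetic difference is that you dispatch the case $v=0$ via Lemma~\ref{R0742}, while the paper simply notes $0\in\ran(\nu(\{\xi\}))$; both are fine.
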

\begin{proof}
According to Theorem~\ref{richter}, there exists $\mu\in\cM^\mathrm{fa}_\Lambda$.
Set $\cU:=\cV( \Lambda)$ and $\bU_x:=\bV( \Lambda;x)$ for $x\in \cU$. 
We show that the set $\cU$ and the family $(\bU_x)_{x\in\cU}$ fulfill the assumptions of Corollary~\ref{C131P}:

Lemma~\ref{L0920}  shows that $\cU\in\fX\setminus\{\emptyset\}$.
By Theorem~\ref{P148-c}, there exists $k\in\dN$ such that $\cU=\cV_k(\Lambda)=\cV_{k+1}(\Lambda)$ and $\bU_x=\bV_{k}(\Lambda;x)=\bV_{k+1}(\Lambda;x)$ for all $x\in\cU$.
Clearly, $\bU_x=\bV_{k+1}(\Lambda;x)$ is a linear subspace of $\dC^q$ for  $x\in\cU=\cV_k(\Lambda)$.
By Proposition~\ref{P148-bP}, we have $\ats (\mu)\subseteq\cU$ and $\ran (\mu(\{x\}))\subseteq\bU_x$ for all $x\in\cU$.
Since $\mu\in\cM^\mathrm{fa}_q(\cX,\fX)$, then $\mu(\cX\setminus\mathcal{U})=O$.
Let $\fU:=\cU\cap\fX$ and let  $P\colon\cU\to\cH_q$  be defined by $P(x):=\dP_{\bU_x}$.
Since $\bU_x=\bV_{k}(\Lambda;x)$ for all $x\in\cU=\cV_k(\Lambda)$, then $P=P_k$.
Consequently, Lemma~\ref{L1925}\ref{L1925.d} yields $P\in\cM(\cU,\fU;\cH_q)$.

Let  $F\in\cE$  be such that $P(x)F(x)P(x)\succeq O$ for all $x\in\cU$ and $\Lambda(F)=0$.
Then, by $P=P_k$, $\cU=\cV_k(\Lambda)$, and \eqref{1.22}, we have $F\in\cN_{k+1}(\Lambda)$.
Now let $x\in\cU$.
Then $P(x)=P_k(x)$.
According to \eqref{VVj},  $\bV_{k+1}(\Lambda;x)\subseteq\ker(P_kFP_k+P_k^\bot)(x)$.
Consequently, $\bU_x\subseteq\ker(P(x)F(x)P(x)+[P(x)]^\bot)$.
In view of $\bU_x=\ran (P(x))$, we obtain $(P(x)F(x)P(x)+[P(x)]^\bot)[P(x)]=O$.
Using $[P(x)]^2=P(x)$ and $[P(x)]^\bot P(x)=O$ we get $P(x)F(x)P(x)=O$.
Thus, we have proved that condition~\ref{C131P.I} in Corollary~\ref{C131P} is fulfilled.

Consider arbitrary $\xi\in\cU$ and $v\in\bU_\xi$.
Then $M:=vv^*$ satisfies $M\in\cH_{q,\succeq}$ and $\ran (M)\subseteq\bU_\xi$.
Now Corollary~\ref{C131P} applies, so there exist $\nu\in\cM^\mathrm{fa}_\Lambda$ and $\epsilon>0$ such that $\nu(\{\xi\})\succeq\epsilon M$.
Hence, $v\in\ran (M)\subseteq\ran (\nu(\{\xi\}))$.
By Lemma~\ref{L1459},  $v\in\bW(\Lambda;\xi)$.
Thus, we have shown that $\bV( \Lambda;\xi)\subseteq\bW(\Lambda;\xi)$ for all $\xi\in\cV( \Lambda)$.
Using Theorem ~\ref{P148-c} we can choose $v\in\bU_\xi$ even such that $v\neq 0$.
Then $v\in\ran (\nu(\{\xi\}))$ implies $\nu(\{\xi\})\neq O$, so that $\xi\in\ats (\nu)$.
From Lemma~\ref{L1459},  $\xi\in\cW(\Lambda)$.
This proves that $\cV( \Lambda)\subseteq\cW(\Lambda)$.
\end{proof}

The following theorem is another main result of this paper.

\begin{thm}\label{T149}
Suppose that $\Lambda\neq 0$ is  a moment functional on $\cE$.
Then we have $\cW(\Lambda)=\cV( \Lambda)$ and $\bW(\Lambda;x)=\bV( \Lambda;x)$ for all $x\in\cW(\Lambda)$.
Moreover, $\cW(\Lambda)\in\fX\setminus\{\emptyset\}$.
\end{thm}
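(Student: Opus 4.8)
The plan is to obtain Theorem~\ref{T149} as a direct synthesis of the two inclusion lemmas and the measurability lemma already in place, so the proof itself will be very short. First I would invoke Lemma~\ref{T149<}, which furnishes $\cW(\Lambda)\subseteq\cV(\Lambda)$ together with $\bW(\Lambda;x)\subseteq\bV(\Lambda;x)$ for every $x\in\cW(\Lambda)$, and Lemma~\ref{T149>}, which furnishes the reverse inclusions $\cV(\Lambda)\subseteq\cW(\Lambda)$ and $\bV(\Lambda;x)\subseteq\bW(\Lambda;x)$ for every $x\in\cV(\Lambda)$. Combining the two set inclusions gives the first claimed equality $\cW(\Lambda)=\cV(\Lambda)$.

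For the subspace equality I would then fix an arbitrary $x\in\cW(\Lambda)$; by the equality $\cW(\Lambda)=\cV(\Lambda)$ just established, this $x$ also lies in $\cV(\Lambda)$, so both Lemma~\ref{T149<} and Lemma~\ref{T149>} are applicable at $x$, yielding $\bW(\Lambda;x)\subseteq\bV(\Lambda;x)$ and $\bV(\Lambda;x)\subseteq\bW(\Lambda;x)$ simultaneously, hence $\bW(\Lambda;x)=\bV(\Lambda;x)$. Finally, the last assertion $\cW(\Lambda)\in\fX\setminus\{\emptyset\}$ is immediate: Lemma~\ref{L0920} gives $\cV(\Lambda)\in\fX\setminus\{\emptyset\}$, and we have just shown $\cW(\Lambda)=\cV(\Lambda)$.

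Since every ingredient is already available, there is no genuine obstacle at the level of the theorem statement itself; it is a one-line assembly. The substantive work lies upstream and has been carried out in the preceding lemmas — most notably in Lemma~\ref{T149>}, whose proof depends on the strict-positivity perturbation mechanism of Corollary~\ref{C131P} and on the stabilization Theorem~\ref{P148-c} (which in particular secures $\bV(\Lambda;\xi)\neq\{0\}$ on $\cV(\Lambda)$, allowing one to pick a nonzero $v\in\bV(\Lambda;\xi)$ and thereby certify $\xi\in\ats(\nu)$), as well as in the measurability bookkeeping behind Lemma~\ref{L1925} and Lemma~\ref{L0920}. The only point I would double-check while writing is that the standing hypotheses of those lemmas ($\Lambda\neq0$ and $\Lambda$ a moment functional on $\cE$) match verbatim the hypotheses of Theorem~\ref{T149}, which they do.
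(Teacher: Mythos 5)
Your proposal is correct and follows exactly the paper's own proof: both assemble the theorem by combining Lemma~\ref{T149<} and Lemma~\ref{T149>} for the two equalities (using the set equality to make both lemmas applicable at any $x\in\cW(\Lambda)$) and then invoke Lemma~\ref{L0920} together with $\cW(\Lambda)=\cV(\Lambda)$ for the final assertion. No differences worth noting.
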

\begin{proof}
The first assertions follow by combining Lemmas~\ref{T149<} and~\ref{T149>}.
The equality $\cW(\Lambda)=\cV( \Lambda)$ combined with Lemma~\ref{L0920} yields $\cW(\Lambda)\in\fX\setminus\{\emptyset\}$.
\end{proof}

\section{Some examples}\label{someexamples}
In this section we determine for three simple examples the notions and quantities introduced in Definitions~\ref{D133},~\ref{D147}, and~\ref{cordef} explicitly. 

\begin{exm}\label{exa2}
Throughout this example, we abbreviate
\begin{align}
    A_1&:=
    \begin{pmatrix}
    1&0&0\\
    0&0&0\\
    0&0&0
    \end{pmatrix},&
    A_2&:=
    \begin{pmatrix}
    0&0&0\\
    0&1&0\\
    0&0&0
    \end{pmatrix},&
    A_3&:=
    \begin{pmatrix}
    0&0&0\\
    0&0&0\\
    0&0&1
    \end{pmatrix},\label{a123}\\
    B_1&:=
    \begin{pmatrix}
    1&0&0\\
    0&1&0\\
    0&0&0
    \end{pmatrix},&
    M&:=
    \begin{pmatrix}
    1&1&1\\
    1&1&1\\
    1&1&1
    \end{pmatrix}.\label{mmatrix}
\end{align}
Let $\cX:=\dR$, $q=3$, and $\cE:=\Lin\{F_1,F_2,F_3\}$, where
\[
    F_1(x):=A_{1},\quad
    F_2(x):=xA_{2},\quad
    F_3(x):=x^2(x-1)^2A_{3},
\]
and the matrices $A_1, A_2, A_3$ are defined by \eqref{a123}.
Set $\mu:=\delta_0M$ and $\Lambda:=\Lambda^\mu$.

Then $\ker\Lambda=\Lin \{F_2,F_3\}$ and $\cE_\succeq=\cone \{F_1,F_3\}$, so that
\begin{align*}
    \cN_1(\Lambda)&=\ker\Lambda\cap\cE_\succeq=\cone \{F_3\},\\
    \cV_1(\Lambda)&=\{x\in\dR\colon\det F_3(x)=0\}=\dR,\\
    \bV_1(\Lambda;x)&=\ker F_3(x)=
    \begin{cases}
        \dC^3;&x\in\{0,1\}\\
        \Lin \{e_1,e_2\};&x\in\dR\setminus\{0,1\}
    \end{cases},\\
    P_1(x)&=
    \begin{cases}
        I_3;&x\in\{0,1\}\\
        B_1;&x\in\dR\setminus\{0,1\}
    \end{cases},\quad
    P_1^\bot(x)=
    \begin{cases}
        O;&x\in\{0,1\}\\
        A_3;&x\in\dR\setminus\{0,1\}
    \end{cases}.
\end{align*}
We have
\[
    P_1F_1P_1=F_1,\quad
    P_1F_2P_1=F_2,\quad
    P_1F_3P_1=O
\]
and hence
\begin{align*}
    \cN_2(\Lambda)&=\{F\in\ker\Lambda\colon (P_1FP_1)(x)\succeq O\text{ for all }x\in\cV_1(\Lambda)\}=\cone \{F_3\},\\
    \cV_2(\Lambda)&=\{x\in\dR\colon\det(P_1F_3P_1+P_1^\bot)(x)=0\}=\dR,\\
    \bV_2(\Lambda;x)&=\ker(P_1F_3P_1+P_1^\bot)(x)=
    \begin{cases}
        \dC^3;&x\in\{0,1\}\\
        \Lin \{e_1,e_2\};&x\in\dR\setminus\{0,1\}
    \end{cases}.
\end{align*}
Consequently,
\begin{align*}
    \cW(\Lambda)&=\cV( \Lambda)=\cV_1(\Lambda)=\cV_{1+\ell}(\Lambda)=\dR,\\
    \bW(\Lambda;x)&=\bV( \Lambda;x)=\bV_1(\Lambda;x)=\bV_{1+\ell}(\Lambda;x)
\end{align*}
for all $\ell\in \dN_0$, $x\in \dR$. That is, we have $k=1$ in Theorem~\ref{P148-c}. 
For instance,
\begin{align*}
    \nu_0&:=\delta_0I_3\in\cM_\Lambda&\text{with }\ran (\nu_0(\{0\}))&=\dC^3,\\
    \nu_1&:=\delta_{-1}A_2+\delta_1I_3\in\cM_\Lambda&\text{with }\ran (\nu_1(\{1\}))&=\dC^3,\\
    \sigma&:=\delta_{-\xi}A_2+\delta_\xi B_1\in\cM_\Lambda&\text{with }\ran (\sigma(\{\xi\}))&=\Lin \{e_1,e_2\}
\end{align*}
for arbitrary $\xi\in\dR$.
\end{exm}

\begin{exm}\label{exa3}
Let $A_1$, $A_2$, $A_3$ and $M$ be the matrices defined by \eqref{a123} and \eqref{mmatrix}.
In addition we need the matrices
\[
    B_1:=
    \begin{pmatrix}
    1&0&0\\
    0&1&0\\
    0&0&0
    \end{pmatrix},\quad
    B_2:=
    \begin{pmatrix}
    0&0&0\\
    0&1&0\\
    0&0&1
    \end{pmatrix}.
\]
Let $\cX=\dR^3$, $q=3$, and $\cE:=\Lin \{F_1, F_2, F_3\}$, where
\[
    F_1(x):=A_1,\quad
    F_2(x):=x^2(x-2)^2A_2+x A_3,\quad
    F_3(x):=x^2(x-1)^2A_3.
\]
Set $\mu:=\delta_0M$ and $\Lambda:=\Lambda^\mu$.
Then $\ker\Lambda=\Lin \{F_2,F_3\}$ and $\cE_\succeq=\cone \{F_1,F_3\}$, so that
\begin{align*}
    \cN_1(\Lambda)&=\ker\Lambda\cap\cE_\succeq=\cone \{F_3\},\\
    \cV_1(\Lambda)&=\{x\in\dR\colon\det F_3(x)=0\}=\dR,\\
    \bV_1(\Lambda;x)&=\ker F_3(x)=
    \begin{cases}
        \dC^3;&x\in\{0,1\}\\
        \Lin \{e_1,e_2\};&x\in\dR\setminus\{0,1\}
    \end{cases},\\
    P_1(x)&=
    \begin{cases}
        I_3;&x\in\{0,1\}\\
        B_1;&x\in\dR\setminus\{0,1\}
    \end{cases},\quad
    P_1^\bot(x)=
    \begin{cases}
        O;&x\in\{0,1\}\\
        A_3;&x\in\dR\setminus\{0,1\}
    \end{cases}.
\end{align*}
We have
\[
    P_1F_1P_1=F_1,\quad
    (P_1F_2P_1)(x)=
    \begin{cases}
        B_2;&x=1\\
        x^2(x-2)^2A_2&x\in\dR\setminus\{1\}
    \end{cases},\quad
    P_1F_3P_1=O.
\]
Hence
\begin{align*}
    \cN_2(\Lambda)&=\{F\in\ker\Lambda\colon (P_1FP_1)(x)\succeq O\text{ for all }x\in\cV_1(\Lambda)\}=\cone \{F_2,F_3\},\\
    \cV_2(\Lambda) &=\{x\in\dR\colon\det(P_1F_2P_1+P_1^\bot)(x)=0\}=\dR,\\
    \bV_2(\Lambda;x)&=\ker(P_1F_2P_1+P_1^\bot)(x)=
    \begin{cases}
        \dC^3;&x=0\\
        \Lin \{e_1,e_2\};&x=2\\
        \Lin \{e_1\};&x\in\dR\setminus\{0,2\}
    \end{cases},\\
    P_2(x)&=
    \begin{cases}
        I_3;&x=0\\
        B_1;&x=2\\
        A_1;&x\in\dR\setminus\{0,2\}
    \end{cases},\quad
    P_2^\bot(x)=
    \begin{cases}
        O;&x=0\\
        A_3;&x=2\\
        B_2;&x\in\dR\setminus\{0,2\}
    \end{cases},
\end{align*}
and
\begin{gather*}
    P_2F_1P_2=F_1,\quad
    P_2F_2P_2=
    \begin{cases}
    O;&x\in \dR\setminus\{0\}\\
    F_2;&x=0
    \end{cases},\\
    P_2F_3P_2=\begin{cases}
    O;&x\in \dR\setminus\{0\}\\
    F_3;&x=0
    \end{cases}.
\end{gather*}
Using these formulas we compute $\cN_3(\Lambda)=\cN_2(\Lambda)$, $\cV_3(\Lambda)=\cV_2(\Lambda)$ and $\bV_3(\Lambda;x)= \bV_2(\Lambda;x)$ for  $x\in \dR$. Therefore,
\begin{align*}
    \cW(\Lambda)&=\cV( \Lambda)=\cV_2(\Lambda)=\cV_{2+\ell}(\Lambda)=\dR,\\
    \bW(\Lambda;x)&=\bV( \Lambda;x)=\bV_2(\Lambda;x)=\bV_{2+\ell}(\Lambda;x)
\end{align*}
for all $\ell\in \dN_0$, $x\in \dR$. Thus, we have $k=2$ in Theorem~\ref{P148-c}. 

Moreover, for instance, $\nu_0:=\delta_0I_3\in\cM_\Lambda$ with $\ran (\nu_0(\{0\}))=\dC^3$, $\nu_2:=\delta_2B_1\in\cM_\Lambda$ with $\ran (\nu_2(\{2\}))=\Lin \{e_1,e_2\}$, and $\sigma:=\delta_\xi A_1\in\cM_\Lambda$ with $\ran (\sigma(\{\xi\}))=\Lin \{e_1\}$ for arbitrary $\xi\in\dR$.
\end{exm}

\begin{exm}\label{exa1}
Let $\cX:=\dR$, $q=2$, and $\cE:=\Lin\{F_1,F_2,F_3\}$, where
\[
    F_1(x):=A_{1},\quad
    F_2(x):=xA_{2},\quad
    F_3(x):=x^2(x-1)^2A_{3},
\]
and
\[
    A_{1}:=
    \begin{pmatrix}
    1&0\\
    0&0
    \end{pmatrix},\qquad
    A_{2}:=
    \begin{pmatrix}
    0&0\\
    0&1
    \end{pmatrix},\qquad
    A_{3}:=
    \begin{pmatrix}
    1&1\\
    1&1
    \end{pmatrix}.  
\]

Set $\mu:=\delta_0I_2$ and  $\Lambda:=\Lambda^\mu$.
Then  $\ker\Lambda=\Lin\{F_2,F_3\}$, $\cE_\succeq=\cone \{F_1,F_3\}$, and
\begin{align*}
    \cN_1(\Lambda)&=\ker\Lambda\cap\cE_\succeq=\cone \{F_3\},\\
    \cV_1(\Lambda)&=\{x\in\dR\colon\det F_3(x)=0\}=\dR,\\
    \bV_1(\Lambda;x)&=\ker F_3(x)=
    \begin{cases}
        \dC^2;&x\in\{0,1\}\\
        \dC\cdot(-1,1)^\trn ;&x\in\dR\setminus\{0,1\}
    \end{cases},\\
    P_1(x)&=
    \begin{cases}
        I_2;&x\in\{0,1\}\\
        B;&x\in\dR\setminus\{0,1\},
    \end{cases}, \quad\text{where}\quad  B
    :=\frac{1}{2}\begin{pmatrix}1&-1\\-1&1\end{pmatrix}.
\end{align*}
In this case, we derive $\cN_2(\Lambda)=\cN_1(\Lambda)$, $\cV_2(\Lambda)=\cV_1(\Lambda)$, and $\bV_2(\Lambda;x)=\bV_1(\Lambda;x)$. 
Consequently,
\begin{align*}
    \cW(\Lambda)&=\cV( \Lambda)=\cV_1( \Lambda)=\cV_{1+\ell}(\Lambda)=\dR,\\
    \bW(\Lambda;x)&=\bV(\Lambda;x)=\bV_1(\Lambda;x)=\bV_{1+\ell}(\Lambda;x)
\end{align*}
for all $\ell\in \dN_0$ and $x\in \dR$.
That is,  $k=1$ in Theorem~\ref{P148-c}. 
For instance,
\begin{align*}
    \nu_0&:=\delta_0I_2=\mu\in\cM_\Lambda&\text{with }\ran (\nu_0(\{0\}))&=\dC^2,\\
    \nu_1&:=\delta_{-1}B+\frac{1}{2}\delta_1I_2\in\cM_\Lambda&\text{with }\ran (\nu_1(\{1\}))&=\dC^2,\\
    \sigma&:=\delta_{-\xi}B+\delta_\xi B\in\cM_\Lambda&\text{with }\ran (\sigma(\{\xi\}))&=\dC\cdot (-1,1)^\trn ,
\end{align*}
for any $\xi\in\dR$.
\end{exm}

\end{document}